\newtheorem{theorem}{Theorem}[subsection]
\newtheorem{proposition}[theorem]{Proposition}
\newtheorem{lemma}[theorem]{Lemma}
\newtheorem{example}[theorem]{Example}
\newtheorem{corollary}[theorem]{Corollary}
\newtheorem{remark}[theorem]{Remark}
\newtheorem{assertion}[theorem]{Assertion}
\newtheorem{definition}[theorem]{Definition}
\renewcommand{\subsection}{\@startsection{subsection}{1}
{0pt}{3.25ex plus 1ex minus.2ex}{-1em}{\normalfont\normalsize\bf}}
\date{}
\begin{document}

\title{{\bf Generalizations of  L- and M-weakly compact operators}}
\maketitle
\author{\centering{{Safak Alpay$^{1}$, Eduard Emelyanov$^{2}$, Svetlana Gorokhova $^{3}$\\ 
\small $1$ Middle East Technical University, Ankara, Turkey\\ 
\small $2$ Sobolev Institute of Mathematics, Novosibirsk, Russia\\ 
\small $3$ Uznyj matematiceskij institut VNC RAN, Vladikavkaz, Russia}

\abstract{L- and M-weakly compact operators were introduced by Meyer-Nieberg in the beginning of seventies in attempts 
of a diversification of the concept of weakly compact operators via imposing Banach lattice structure on the range or on 
the domain of operators. We investigate regularity and algebraic properties of various generalizations 
of L- and M-weakly compact operators from a unified point of view via using regularly P-operators.}

\vspace{5mm}
%$*$Corresponding Author}%%\abstract{.}\\ 
\noindent
{\bf Keywords:} Banach lattice, regularly ${\cal P}$-operator, domination property, 
limited set, \text{\rm bi}-limited set, \text{\rm LW}-set.

\noindent
{\bf MSC2020:} {\normalsize 46A40, 46B42, 46B50, 47B65}

}}
\bigskip
\bigskip

%%%%%%%%%%%%%%%%%%%%
\section{Introduction and Preliminaries}
%%%%%%%%%%%%%%%%%%%%

L- and M-weakly compact operators were introduced half a century ago
in order to diversify the concept of weakly compact operators by adding lattice structure 
on the range/domain of operators.  Since then, these operators attract permanent attention and 
inspire further investigations. Apart of study of already known classes of such operators, we 
introduce and investigate new classes of limitedly \text{\rm LW}- and 
bi-limitedly \text{\rm LW}-operators.

\subsection{}
In the present paper: vector spaces are real; operators are linear; the letters $X$, $Y$, $Z$ 
stand for Banach spaces, $E$, $F$, $G$ for Banach lattices, and $V$, $W$ for ordered vector spaces. 
An operator $T:X\to Y$ (a subset $A$ in $X$) is said to be {\em bounded} 
if $T$ is norm continuous (resp., if $A$ is norm bounded). We denote by $B_X$ the closed unit ball of $X$;
by $\text{\rm L}(X,Y)$ ($\text{\rm K}(X,Y)$, $\text{\rm W}(X,Y)$) the space of all bounded (compact, weakly compact) 
operators from $X$ to $Y$; by $E_+$ the positive cone of $E$; by $\text{\rm sol}(A)$ the solid hull of $A\subseteq E$; 
and by $E^a=\{x\in E: |x|\ge x_n\downarrow 0\Rightarrow\|x_n\|\to 0\}$ the \text{\rm o}-continuous part of $E$. 
An operator $T:V\to W$ is called {\em regular} if $T=T_1-T_2$ for some positive operators $T_1,T_2:V\to W$. 
We denote by: 
\begin{enumerate}[]
\item ${\cal L}(V,W)$ the space of all linear operators from $V$ to $W$;  
\item ${\cal L}_r(V,W)$ the space of all regular operators from $V$ to $W$;  
\item ${\cal L}_{ob}(V,W)$ the space of all order bounded operators from $V$ to $W$.  
\end{enumerate}
\noindent
Clearly, ${\cal L}_r(V,W)\subseteq{\cal L}_{ob}(V,W)$.
The order dual $E^\sim$ of $E$ coincides with the norm dual $E'$.
For any $S\in{\cal L}_{ob}(E,F)$, the adjoint $S':F'\to E'$ is order bounded and o-continuous     
(cf. \cite[Thm.1.73]{AlBu}). Furthermore, let $T\in{\cal L}_{ob}(E,F)$, 
then $T''\in{\cal L}_{ob}(E'',F'')$. 
Since $F''$ is Dedekind complete, 
$T''\in{\cal L}_r(E'',F'')\subseteq\text{\rm L}(E'',F'')$. 
Thus $T\in\text{\rm L}(E,F)$, and hence 
${\cal L}_{ob}(E,F)\subseteq\text{\rm L}(E,F)$.
We shall also use   
\begin{equation}\label{nf}
   \|f\|=\sup\{|f(x)|: x\in B_X\}=\sup\{|y(f)|: y\in B_{X''}\} \ \ \ \ (\forall f\in X')
\end{equation}
and, as a consequence of (\ref{nf}),   
\begin{equation}\label{convbb}
  (f_n)\ \text{\rm in}\ X'\ \text{\rm converges uniformly on}\ B_X\ \text{\rm iff it converges uniformly on}\ B_{X''} 
\end{equation}
under the identification of $f\in X'$ with $\hat{f}\in(X')''$.
By \cite[Def.1.1]{AAT}: a subset $A\subseteq E$ is called b-{\em order bounded} 
whenever $\hat{A}$ is order bounded in $E''$, where $E\hat{\to}E''$ is the natural embedding 
of $E$ into $E''$. We call an operator $T:E\to F$ b-{\em order bounded}, if
$T(A)$ is order bounded in $F$ for each b-order bounded $A\subseteq E$. 
An operator $T:E\to Y$ is said to be \text{\rm b-w}{\em -compact} 
(resp. \text{\rm o-w}{\em -compact}) if 
$T$ carries \text{\rm b}-order bounded sets (resp. order interval) of $E$ onto a relatively 
\text{\rm w}-compact subset of $Y$. We denote by $\text{\rm b-W}(E,Y)$ 
(resp. by $\text{\rm o-W}(E,Y)$) the space of all \text{\rm b-w}-compact
weakly (resp. \text{\rm o-w}-compact) operators from $E$ to $Y$.  

\subsection{}
The Meyer-Nieberg theorem (cf. \cite[Thm.5.55]{AlBu}): 
\begin{enumerate}[($\ast$)]
\item
{\em if every disjoint sequence in the solid hull of $L\subseteq F$ is norm null, 
then $\text{\rm sol}(L)$ $($and hence $L$ itself$)$ is relatively w-compact in $F$} 
\end{enumerate}
produces the following proper subclass of relatively w-compact subsets.
\begin{definition}\label{LWC-subsets} 
{\em (\cite[Def.1.iii)]{Mey0})
A bounded subset $L$ of $F$ is called an \text{\rm LW}-{\em set} whenever
every disjoint sequence in $\text{\rm sol}(L)$ is norm null.}
\end{definition}
\noindent
For every \text{\rm LW}-subset $A$ of $E$, we have $A\subseteq E^a$. 
Indeed, otherwise there is $a\in A$
with $|a|\in E\setminus E^a$, and then there exists a disjoint sequence 
$(x_n)$ in $[0,|a|]\subseteq\text{\rm sol}(A)$
with $\|x_n\|\not\to 0$. 
It can be easily seen that $B_E$ is w-compact yet not an \text{\rm LW}-set
in any infinite dimensional reflexive $E$.
We need the following characterization 
of \text{\rm LW}-sets (see \cite[Prop.3.6.2]{Mey} and \cite[Lm.2.4]{BLM1}):

\begin{assertion}\label{Meyer 3.6.2}
For a nonempty bounded subset $A$ of $E$, TFAE.
\begin{enumerate}[\em (i)]
\item $A$ is an \text{\rm LW}-set.
\item For every $\varepsilon>0$, there is $u_\varepsilon\in E_+^a$ such that
$A\subseteq[-u_\varepsilon,u_\varepsilon]+\varepsilon B_E$.
\item $f_n(x_n)\to 0$ for every $(x_n)$ in $A$ and every disjoint bounded $(f_n)$ in $E'$.
\end{enumerate}
\end{assertion}
\noindent
Recall that a subset $A$ of $E$ is {\em almost order bounded} whenever, for every $\varepsilon>0$, 
there is $x\in E_+$ with $A \subseteq [-x,x]+\varepsilon B_E$. 
By Assertion \ref{Meyer 3.6.2},
\begin{equation}\label{o-bdd is Lwc}
     \text{every almost order bounded set in an \text{\rm o}-continuous $E$
     is an \text{\rm LW}-set}.  
\end{equation}
Clearly, 
\begin{equation}\label{rel comp is almost o-bdd}
    \text{every relatively compact subset of $E$ is almost order bounded}.
\end{equation}

\subsection{}
An \text{\rm LW}-{\em operator} $T:X\to F$ is defined \cite[Def.1.iii)]{Mey0}
via the condition that $T(B_X)$ is an \text{\rm LW}-set. Then 
$\text{\rm LW}(X,F)\subseteq\text{\rm W}(X,F)$, and
\begin{equation}\label{1a}
   T\in\text{\rm LW}(X,F)\Longleftrightarrow\text{\rm $T$ 
takes bounded subsets onto \text{\rm LW}-subsets.}
\end{equation}
In view of (\ref{o-bdd is Lwc}), and of (\ref{rel comp is almost o-bdd}):  
$$
   \text{if the norm on $F$ is \text{\rm o}-continuous 
   then $\text{\rm K}(X,F)\subseteq\text{\rm LW}(X,F)$}.  
$$
\noindent
The next important fact goes back to Meyer-Nieberg (see also \cite[Thm.5.63]{AlBu} and 
\cite[Prop.2.2]{BuDo} for the more general setting).
It is the key tool to various generalizations of \text{\rm LW}-operators.

\begin{assertion}\label{Burkinshaw--Dodds}
Let $A\subseteq F$ and $B\subseteq F'$ be nonempty bounded sets. TFAE.
\begin{enumerate}[\em (i)]
\item Each disjoint sequence in $\text{\rm sol}(A)$ is uniformly null on $B$.
\item Each disjoint sequence in $\text{\rm sol}(B)$  is uniformly null on $A$.
\end{enumerate}
\end{assertion}
\noindent
Assertion \ref{Burkinshaw--Dodds} yields a method of producing many clases of operators. 
For example if we consider sets of the form $A=T(C)$ with any \text{\rm w}-compact 
subset $C$ of $X$ and $B=B_{F'}$ then we obtain 
{\em almost} \text{\rm LW}- and {\em almost} \text{\rm MW}-{\em operators} \cite{BLM1}, whereas the choice of sets of the form 
$A=T[0,x]$ with any order interval $[0,x]$ in $E$ and $B=B_{F'}$ 
produces {\em order} \text{\rm LW}- and  
{\em order} \text{\rm MW}-{\em operators} \cite{BLM2}.
We include a proof of the following certainly well known fact.

\begin{lemma}\label{just lemma}
Let $L$ be a nonempty bounded subset of $F'$. TFAE.
\begin{enumerate}[\em i)]
\item $L$ is an \text{\rm LW}-subset of $F'$.
\item Each disjoint bounded sequence in $F$  is uniformly null on $L$.
\item Each disjoint bounded sequence in $F''$ is uniformly null on $L$.
\end{enumerate}
\end{lemma}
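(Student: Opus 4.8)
\noindent\emph{Plan of proof.} The plan is to treat statement~(i) as a hub and prove (i)$\Leftrightarrow$(ii) and (i)$\Leftrightarrow$(iii) separately, each by specializing one of the two Assertions above to a suitably chosen ambient Banach lattice. I note at the outset that (iii)$\Rightarrow$(ii) is immediate: the canonical embedding $F\hat{\to}F''$ is an isometric lattice homomorphism, hence carries disjoint bounded sequences of $F$ to disjoint bounded sequences of $F''$, and $\sup_{f\in L}|f(x_n)|=\sup_{f\in L}|\hat{x}_n(f)|$ for every $(x_n)$ in $F$. By contrast, the opposite passage (ii)$\Rightarrow$(iii), from disjoint sequences in $F$ to disjoint sequences in the larger space $F''$, is \emph{not} transparent; this is precisely why I route everything through~(i) rather than attempting (ii)$\Leftrightarrow$(iii) head on.

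\noindent\emph{Step 1: (i)$\Leftrightarrow$(ii).} I would apply Assertion~\ref{Burkinshaw--Dodds} to the Banach lattice $F$ with $A:=B_F$ and $B:=L$. Since the closed unit ball of a Banach lattice is solid, $\text{\rm sol}(B_F)=B_F$, so condition~(i) of that Assertion reads ``every disjoint sequence in $B_F$ is uniformly null on $L$''; rescaling an arbitrary disjoint bounded sequence of $F$ by its norm bound (positive scaling preserves disjointness) shows this is equivalent to~(ii) of the lemma. On the dual side, for a sequence $(g_n)$ in $\text{\rm sol}(L)\subseteq F'$ uniform nullity on $B_F$ is, by the first equality in~(\ref{nf}), the same as $\|g_n\|\to 0$; hence condition~(ii) of the Assertion reads ``every disjoint sequence in $\text{\rm sol}(L)$ is norm null'', which is exactly the statement that $L$ is an \text{\rm LW}-subset of $F'$, i.e.\ (i) of the lemma.

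\noindent\emph{Step 2: (i)$\Leftrightarrow$(iii).} Here I would invoke the equivalence (i)$\Leftrightarrow$(iii) of Assertion~\ref{Meyer 3.6.2} for the Banach lattice $E:=F'$ (legitimate, since $F'$ is a Banach lattice and that Assertion requires no order continuity), with $A:=L$ and $E'=F''$: it says that $L$ is an \text{\rm LW}-subset of $F'$ if and only if $\phi_n(f_n)\to 0$ for every $(f_n)$ in $L$ and every disjoint bounded $(\phi_n)$ in $F''$. It then remains to match this with~(iii): one direction is trivial, and for the other, were some disjoint bounded $(\phi_n)$ in $F''$ not uniformly null on $L$, I would extract $\varepsilon>0$, a subsequence with $\sup_{f\in L}|\phi_{n_k}(f)|\ge\varepsilon$, pick $f_{n_k}\in L$ with $|\phi_{n_k}(f_{n_k})|\ge\varepsilon$, fill the remaining indices with any fixed element of the nonempty set $L$, and thereby contradict $\phi_n(f_n)\to 0$. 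Combining Steps~1 and~2 closes the chain (ii)$\Rightarrow$(i)$\Rightarrow$(iii)$\Rightarrow$(ii). The main obstacle, modest as it is, is pure bookkeeping — choosing the correct bidual/ambient lattice for each Assertion and observing that the only genuinely non-formal implication, (ii)$\Rightarrow$(iii), is obtained for free once~(i) is inserted as an intermediary.
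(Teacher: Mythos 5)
Your proof is correct, and its first half is exactly the paper's argument: the equivalence (i)$\Leftrightarrow$(ii) is obtained, as in the paper, by applying Assertion~\ref{Burkinshaw--Dodds} to $A=B_F$ and $B=L$ and using solidity of $B_F$ together with the first equality in~(\ref{nf}). Where you diverge is the second half: the paper simply applies Assertion~\ref{Burkinshaw--Dodds} a second time, now to $A=L\subseteq F'$ and $B=B_{F''}$, using~(\ref{convbb}) to identify ``uniformly null on $B_{F''}$'' with ``norm null'' for disjoint sequences in $\text{\rm sol}(L)$; you instead invoke the sequential characterization (i)$\Leftrightarrow$(iii) of Assertion~\ref{Meyer 3.6.2} in the lattice $E:=F'$ and then bridge from the diagonal condition $\phi_n(f_n)\to 0$ to uniform nullity by a subsequence extraction. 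Since that characterization is itself essentially Assertion~\ref{Burkinshaw--Dodds} specialized to $B=B_{E'}$, the underlying mechanism is the same; your route trades the appeal to~(\ref{convbb}) for the small extra bookkeeping step (in which, strictly, one should select $f_{n_k}$ with $|\phi_{n_k}(f_{n_k})|\ge\varepsilon/2$ since the supremum over $L$ need not be attained — a cosmetic point). Both arguments are equally valid; the paper's is marginally more economical in that a single assertion, used twice, yields both equivalences with no sequential-to-uniform translation.
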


\begin{proof}
Keeping in mind (\ref{convbb}) and the fact that the unit balls in Banach lattices are solid, 
we apply Assertion \ref{Burkinshaw--Dodds} first to $A=B_F$ and $B=L$, 
and then to $A=L$ and $B=B_{F''}$ in order to obtain 
that both ii) and iii) are equivalent to the condition that 
every disjoint sequence in $\text{\rm sol}(L)$ is norm null,
which means that $L$ is an \text{\rm LW}-subset of $F'$.
\end{proof}
\noindent
The next well known assertion is a direct consequence of Lemma \ref{just lemma}.

\begin{assertion}\label{just asser}
Let $T:X\to F'$ be a bounded operator. TFAE.
\begin{enumerate}[\em (i)]
\item $T\in\text{\rm LW}(X,F')$.
\item $(T'f_n)$ is norm null for every 
disjoint bounded sequence $(f_n)$ in $F$.
\item $(T'f_n)$ is norm null for every 
disjoint bounded sequence $(f_n)$ in $F''$.
\end{enumerate}
\end{assertion}
\noindent
The condition (iii) had lead to the idea of the following definition.

\begin{definition}\label{MW operators}
{\em \cite[Def.1.iv)]{Mey0}}
{\em 
A bounded operator $S:E\to Y$ is called$:$
\begin{enumerate}[]
an \text{\rm MW}-{\em operator}, or $S\in\text{\rm MW}(E,Y)$, 
if $\|Sx_n\|\to 0$ for every disjoint bounded sequence $(x_n)$ in $E$.
\end{enumerate}}
\end{definition}
\noindent
The following duality result \cite[Satz.3]{Mey0} (cf. also \cite[Them.5.64]{AlBu}) 
has justified the success of introducing $\text{\rm LW}$- and $\text{\rm MW}$-operators.

\begin{assertion}\label{Meyer-Nieberg}
The following holds.
\begin{enumerate}[\em (i)]
\item 
  $X\stackrel{T}{\rightarrow}F$ is an \text{\rm LW}-operator $\Longleftrightarrow$ 
  $F'\stackrel{T'}{\rightarrow} X'$ is an \text{\rm MW}-operator.
\item 
  $E\stackrel{S}{\rightarrow}Y$ is an \text{\rm MW}-operator $\Longleftrightarrow$ 
$Y'\stackrel{S'}{\rightarrow}E'$ is an \text{\rm LW}-operator.
\end{enumerate}
\end{assertion}

\subsection{} 
Assertion \ref{Burkinshaw--Dodds} applied to
$A=T(C)$ with any compact $C\subseteq X$ and $B=B_{F'}$ gives:

\begin{definition}\label{c-LW operators}
{\em
\begin{enumerate}[a)]
\item 
An operator $T:X\to F$ is called a \text{\rm c-LW}-{\em ope\-rator} 
if $T$ maps compact subsets of $X$ 
into \text{\rm LW}-subsets of $F$.
\item 
A bounded operator $S:E\to Y$ is called a \text{\rm c-MW}-{\em operator} 
if $S'\in\text{\rm c-LW}(Y',E')$.
\end{enumerate}}
\end{definition}
\noindent
In view of Assertion \ref{Meyer 3.6.2}, we may replace in 
Definition \ref{c-LW operators}~a) 
compact subsets of $X$ by singletons and get:

\begin{assertion}\label{singletons}
TFAE.
\begin{enumerate}[\em (i)]
\item 
$T\in\text{\rm c-LW}(X,F)$.
\item 
$\{Tx\}$ is an $\text{\rm LW}$-subset of $F$ for each $x\in X$.
\item 
$T(X)\subseteq F^a$.
\item 
$f_n(Tx)\to 0$ for each $x\in X$ and for every disjoint bounded $(f_n)$ in $F'$.
\item 
$T'$ takes disjoint bounded sequences of $F'$ onto 
$\text{\rm w}^\ast$-null sequences of $X'$.
\end{enumerate}
\end{assertion}
\noindent
The next fact is a consequence of Lemma \ref{just lemma}.

\begin{assertion}\label{just asser c}
Let $T:X\to F'$ be a bounded operator. TFAE.
\begin{enumerate}[\em (i)]
\item $T\in\text{\rm c-LW}(X,F')$.
\item $(T'f_n)$ is $\text{\rm w}^\ast$-null in $X'$ for every 
disjoint bounded $(f_n)$ in $F$.
\item $(T'f_n)$ is $\text{\rm w}^\ast$-null in $X'$ for every 
disjoint bounded $(f_n)$ in $F''$.
\end{enumerate}
\end{assertion}
\noindent
By Assertion~\ref{singletons}, the class of \text{\rm c-LW}-operators is too 
large to be of any reasonable interest.

\bigskip
\text{\rm b-LW}-{\em operators} were introduced in \cite{BLM3}.
They correspond to taking the pair $(A,B)$ in Assertion \ref{Burkinshaw--Dodds}:
$A=T(U)$, with an arbitrary b-bounded $U\subseteq E$, and $B=B_{F'}$.
We recall the following definition.

\begin{definition}\label{bi-limited set}
{\em
A bounded subset $A$ of $X$ is called:
\begin{enumerate}[a)]
\item 
{\em limited} if each \text{\rm w}$^\ast$-null sequence in $X'$ is 
uniformly null on $A$ (cf. \cite{BD}). 
\item 
\text{\rm bi}-{\em limited} if each \text{\rm w}$^\ast$-null sequence in $X'''$ is 
uniformly null on $A$. 
\end{enumerate}}
\end{definition}
\noindent
It is well known that $B_X$ is not limited when $\dim(X)=\infty$, and 
$$
   A \ \text{\rm is compact} \ \Rightarrow A \ \text{\rm is limited} \ \Rightarrow 
   A \ \text{\rm is bi-limited}. 
$$
Thus $B_{c_0}$ is not limited in $c_0$. However $B_{c_0}$ is \text{\rm bi}-limited in $c_0$
by Phillip's lemma (cf. \cite[Thm.4.67]{AlBu}).

\bigskip
\text{\rm b-LW}-{\em operators} were introduced in \cite{BLM3}.
They correspond to taking the pair $(A,B)$ in Assertion \ref{Burkinshaw--Dodds}:
$A=T(U)$, with an arbitrary b-bounded $U\subseteq E$, and $B=B_{F'}$.

\subsection{}
We recall the following definition.

\begin{definition}\label{bi-limited set}
{\em
A bounded subset $A$ of $X$ is called:
\begin{enumerate}[a)]
\item 
{\em limited} if each \text{\rm w}$^\ast$-null sequence in $X'$ is 
uniformly null on $A$. 
\item 
\text{\rm bi}-{\em limited} if each \text{\rm w}$^\ast$-null sequence in $X'''$ is 
uniformly null on $A$. 
\end{enumerate}}
\end{definition}
\noindent
It is well known that $B_X$ is not limited when $\dim(X)=\infty$, and 
$$
   A \ \text{\rm is relatively compact} \ \Rightarrow A \ \text{\rm is limited} \ \Rightarrow 
   A \ \text{\rm is bi-limited}. 
$$
Thus $B_{c_0}$ is not limited in $c_0$. However $B_{c_0}$ is \text{\rm bi}-limited in $c_0$
by Phillip's lemma (cf. \cite[Thm.4.67]{AlBu}). Every \text{\rm bi}-limited subset of a reflexive Banach
space is relatively compact.

\subsection{Regularly ${\cal P}$-operators and the domination property.} 
The following notion was introduced in \cite[Def.2]{Emel}.
In the present paper, we develope ideas of \cite{Emel} for investigation algebraic
and lattice properties of the operators.

\begin{definition}\label{rP-operators}{\em
Let ${\cal P}$ be a set of operators (the set of ${\cal P}$-{\it operators}) 
from $V$ to $W$. An operator $T:V\to W$ is called a {\it regularly} 
${\cal P}$-{\it operator}
(shortly, an \text{\rm r}-${\cal P}$-{\it operator}), if $T$ is a difference 
of two positive ${\cal P}$-operators.}
\end{definition}
\noindent
Given a set ${\cal P}$ of operators from $V$ to $W$. We denote by: 
\begin{enumerate}[]
\item ${\cal P}(V,W)$ the set of all ${\cal P}$-operators from $V$ to $W$;  
\item ${\cal P}_+(V,W)$ the set of all positive ${\cal P}$-operators from $V$ to $W$;  
\item ${\cal P}_r(V,W)$ the set of all regular operators in ${\cal P}(V,W)$;  
\item \text{\rm r}-${\cal P}(V,W)$ the set of all 
\text{\rm r}-${\cal P}$-operators in ${\cal P}(V,W)$.
\end{enumerate}
\noindent
${\cal P}$-operators are said to satisfy the {\em domination property} if
 it follows from $0\le S\le T\in {\cal P}$
that $S$ is likewise a ${\cal P}$-operator. 
We say that an operator $T\in{\cal L}(V,W)$ is 
${\cal P}$-{\em dominated} if $\pm T\le U$ for some $U\in{\cal P}$ 
(cf. \cite[p.3022]{AW}). 

\begin{proposition}\label{prop elem}
Let ${\cal P}\subseteq{\cal L}(V,W)$, ${\cal P}\pm{\cal P}\subseteq{\cal P}\ne\emptyset$, 
and $T\in{\cal L}(V,W)$.
\begin{enumerate}[{\em i)}]
\item $T$ is an \text{\rm r}-${\cal P}$-operator 
iff $T$ is a ${\cal P}$-dominated ${\cal P}$-operator.  
\item Assume the modulus $|T|$ of $T$ exists in ${\cal L}(V,W)$ 
and suppose that ${\cal P}$-operators satisfy 
the domination property. Then $T$ is an \text{\rm r}-${\cal P}$-operator 
iff $|T|$ is a ${\cal P}$-operator.
\end{enumerate}
\end{proposition}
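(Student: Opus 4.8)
The plan is to prove both equivalences by purely order-theoretic bookkeeping, using only that the hypothesis ${\cal P}\pm{\cal P}\subseteq{\cal P}$ makes ${\cal P}$ an additive subgroup of ${\cal L}(V,W)$ containing $0$, together with (for ii)) the domination property and the existence of $|T|$.

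For part i), I would argue both directions directly. \emph{Forward:} if $T=T_1-T_2$ with $T_1,T_2\in{\cal P}_+(V,W)$, then $T\in{\cal P}-{\cal P}\subseteq{\cal P}$, so $T$ is a ${\cal P}$-operator; moreover, putting $U:=T_1+T_2\in{\cal P}$ one has $T\le T_1\le U$ and $-T\le T_2\le U$, hence $\pm T\le U$ and $T$ is ${\cal P}$-dominated. \emph{Converse:} if $T\in{\cal P}$ and $\pm T\le U$ for some $U\in{\cal P}$, then adding $T\le U$ and $-T\le U$ gives $0\le 2U$, so $U\ge 0$; also $U-T\ge 0$ and $U-T\in{\cal P}-{\cal P}\subseteq{\cal P}$, so $U$ and $U-T$ are positive ${\cal P}$-operators, and $T=U-(U-T)$ is an $\text{\rm r}$-${\cal P}$-operator.

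For part ii), the forward implication is immediate from i): if $T$ is an $\text{\rm r}$-${\cal P}$-operator it is ${\cal P}$-dominated, say $\pm T\le U\in{\cal P}$ with $U\ge 0$ as above; since $|T|=\sup\{T,-T\}$ exists in ${\cal L}(V,W)$ and $U$ is an upper bound of $\{T,-T\}$, we get $0\le|T|\le U$, and the domination property forces $|T|\in{\cal P}$. For the converse, assume $|T|\in{\cal P}$; then $|T|\ge 0$ (as $2|T|\ge T+(-T)=0$) and $\pm T\le|T|$, so $T$ is ${\cal P}$-dominated by $|T|$. Also $0\le T+|T|\le 2|T|=|T|+|T|\in{\cal P}$, so the domination property yields $T+|T|\in{\cal P}$, whence $T=(T+|T|)-|T|\in{\cal P}-{\cal P}\subseteq{\cal P}$ is a ${\cal P}$-operator; now part i) gives that $T$ is an $\text{\rm r}$-${\cal P}$-operator. (Alternatively, from the existence of $|T|$ one checks $\tfrac12(|T|\pm T)=\sup\{\pm T,0\}$, so $T^{+}:=\tfrac12(|T|+T)$ and $T^{-}:=\tfrac12(|T|-T)$ exist, satisfy $0\le T^{\pm}\le|T|$, hence lie in ${\cal P}$ by domination, and $T=T^{+}-T^{-}$ directly.)

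I do not expect a genuine obstacle here; the proposition is elementary. The only points that warrant a moment's care are the passage from $\pm T\le U$ to $U\ge 0$ (obtained by adding the two inequalities), and, in the converse of ii), the decision to route through $0\le T+|T|\le 2|T|$ rather than through $T^{+}$, which spares us from first having to justify that $\sup\{T,0\}$ exists from the mere existence of $|T|$.
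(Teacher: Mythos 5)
Your proof is correct, and part i) is exactly the paper's argument: $U=T_1+T_2$ dominates $T$ in one direction, and $T=U-(U-T)$ in the other (you add the small observation $2U\ge 0\Rightarrow U\ge 0$, which the paper leaves tacit). In part ii) you deviate slightly: the paper works with the decomposition $T=T_+-T_-$, using $0\le T_\pm\le |T|$ (resp. $0\le T_+\le T_1$, $0\le T_-\le T_2$) and the domination property to conclude, which implicitly relies on the standard fact that the existence of $|T|=\sup\{T,-T\}$ in the ordered vector space ${\cal L}(V,W)$ already yields $T_\pm=\tfrac12(|T|\pm T)$; your main route instead squeezes $|T|$ directly between $0$ and $U=T_1+T_2$ for the forward implication, and for the converse uses $0\le T+|T|\le 2|T|\in{\cal P}$ together with part i), thereby never needing $T_\pm$ at all. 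The two arguments are of the same elementary order-theoretic nature; what your variant buys is precisely the point you flag at the end, namely that one need not first justify the existence of $\sup\{T,0\}$ from that of $|T|$ (and your parenthetical alternative is the paper's proof verbatim).
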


\begin{proof}
i) Let $T$ be an r-${\cal P}$-operator, say $T=T_1-T_2$ 
with two positive ${\cal P}$-operators $T_1,T_2$.
It follows from ${\cal P}\pm{\cal P}\subseteq{\cal P}$ that $T$ and
$U=T_1+T_2$ are both ${\cal P}$-operators. Since $\pm T\le U$ 
then $T$ is a ${\cal P}$-dominated 
${\cal P}$-operator. 

Now, let $T$ be a ${\cal P}$-dominated ${\cal P}$-operator. 
Take a ${\cal P}$-operator $U$ such that $\pm T\le U$.
Since $T=U-(U-T)$, and both $U$ and $U-T$ are 
positive ${\cal P}$-operators, $T$ is an \text{\rm r}-${\cal P}$-operator.

ii) First assume $|T|$ is a ${\cal P}$-operator. 
Since $T=T_+-T_-$, $0\le T_{\pm}\le|T|$, and 
$|T|$ is a ${\cal P}$-operator, the domination property 
implies that $T_+$ and $T_-$ are both 
positive ${\cal P}$-operators, and hence $T=T_+-T_-$ 
is an \text{\rm r}-${\cal P}$-operator. 

Now, assume $T$ is an \text{\rm r}-${\cal P}$-operator. 
Then there are two positive ${\cal P}$-operators $T_1,T_2$ 
satisfying $T=T_1-T_2$. Since $0\le T_+\le T_1$ and 
$0\le T_-\le T_2$, 
the domination property implies
that $T_+$ and $T_-$ are both ${\cal P}$-operators. 
Hence $|T|=T_++T_-$ is likewise a ${\cal P}$-operator.
\end{proof}

\begin{proposition}\label{vect lat}
Let $F$ be Dedekind complete, and ${\cal P}$ 
a subspace in $\text{\rm L}(E,F)$,
satisfying the domination property. 
Then $\text{\rm r-}{\cal P}(E,F)$
is an order ideal in the Dedekind complete vector lattice
$\text{\rm L}_r(E,F)$.
\end{proposition}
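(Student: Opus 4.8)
The plan is to check the two defining properties of an order ideal in turn: that $\text{\rm r-}{\cal P}(E,F)$ is a linear subspace of $\text{\rm L}_r(E,F)$, and that it is solid, i.e. that $S\in\text{\rm L}_r(E,F)$ and $|S|\le|T|$ for some $T\in\text{\rm r-}{\cal P}(E,F)$ force $S\in\text{\rm r-}{\cal P}(E,F)$. Since $F$ is Dedekind complete, $\text{\rm L}_r(E,F)={\cal L}_r(E,F)={\cal L}_{ob}(E,F)$ is a Dedekind complete vector lattice in which each element has a modulus given by the Riesz--Kantorovich formula, and this modulus again lies in $\text{\rm L}(E,F)\supseteq{\cal P}$; moreover, as ${\cal P}$ is a subspace it satisfies ${\cal P}\pm{\cal P}\subseteq{\cal P}\ne\emptyset$, so Proposition \ref{prop elem} is applicable. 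These two facts drive the whole argument.

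First I would note that, because ${\cal P}$ is a subspace, $\text{\rm r-}{\cal P}(E,F)$ is precisely the set of all differences $T_1-T_2$ with $T_1,T_2\in{\cal P}_+(E,F)$ (any such difference automatically lies in ${\cal P}$). The subspace property is then immediate: the sum of $T_1-T_2$ and $T_1'-T_2'$ equals $(T_1+T_1')-(T_2+T_2')$, again a difference of positive ${\cal P}$-operators; $\lambda(T_1-T_2)$ equals $(\lambda T_1)-(\lambda T_2)$ if $\lambda\ge 0$ and $(|\lambda|T_2)-(|\lambda|T_1)$ if $\lambda<0$; and $0=0-0$ with $0\in{\cal P}$.

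The core is solidity. Given $T\in\text{\rm r-}{\cal P}(E,F)$ and $S\in\text{\rm L}_r(E,F)$ with $|S|\le|T|$, I would first invoke Proposition \ref{prop elem}~ii): $|T|$ exists in ${\cal L}(E,F)$ and ${\cal P}$-operators satisfy the domination property, so, $T$ being an $\text{\rm r-}{\cal P}$-operator, $|T|$ is a ${\cal P}$-operator. Next, in the vector lattice $\text{\rm L}_r(E,F)$ one has $0\le S_+\le|S|\le|T|$ and $0\le S_-\le|S|\le|T|$, and since the order of $\text{\rm L}_r(E,F)$ is the pointwise order inherited from $\text{\rm L}(E,F)$, these are genuine operator inequalities with $|T|\in{\cal P}$. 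Applying the domination property twice gives $S_+,S_-\in{\cal P}_+(E,F)$, whence $S=S_+-S_-\in\text{\rm r-}{\cal P}(E,F)$. Combined with the subspace property, this shows $\text{\rm r-}{\cal P}(E,F)$ is an order ideal of $\text{\rm L}_r(E,F)$.

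I do not anticipate a genuine obstacle: the argument is essentially a matter of organizing Proposition \ref{prop elem} together with Dedekind completeness of $F$. The one point requiring a moment's care is the identification of the lattice-theoretic inequalities $0\le S_\pm\le|S|\le|T|$ in $\text{\rm L}_r(E,F)$ with the pointwise inequalities that the domination property of ${\cal P}\subseteq\text{\rm L}(E,F)$ consumes --- which is unproblematic precisely because the ordering on $\text{\rm L}_r(E,F)$ is the restriction of the pointwise ordering. As an alternative, one could first establish via Proposition \ref{prop elem}~ii) that $\text{\rm r-}{\cal P}(E,F)$ is a sublattice (it contains $|T|$ whenever it contains $T$) and then deduce solidity from $|S|\le|T|$; I would nonetheless prefer the direct route above.
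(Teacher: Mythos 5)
Your proof is correct and takes essentially the same route as the paper: Dedekind completeness of $F$ makes $\text{\rm L}_r(E,F)$ a Dedekind complete vector lattice, and Proposition~\ref{prop elem}~ii) combined with the domination property gives solidity — the paper phrases this as "sublattice plus domination," while your direct domination of $S_+$ and $S_-$ by $|T|\in{\cal P}$ is the same mechanism (and you even note the paper's variant as your alternative). Your explicit check of the subspace property and of the pointwise-order identification merely spells out what the paper leaves implicit.
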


\begin{proof}
Since $F$ is Dedekind complete, $\text{\rm L}_r(E,F)$
is a Dedekind complete vector lattice.
By Proposition~\ref{prop elem}~ii), 
$T\in\text{\rm r-}{\cal P}(E,F)\Longrightarrow |T|\in\text{\rm r-}{\cal P}(E,F)$,
and hence $\text{\rm r-}{\cal P}(E,F)$ is a vector sublattice of $\text{\rm L}_r(E,F)$.
Since ${\cal P}$ satisfies the domination property, $\text{\rm r-}{\cal P}(E,F)$
is an order ideal in $\text{\rm L}_r(E,F)$.
\end{proof}

In general, quite a little is known about 
conditions on ${\cal P}$-operators under that 
every regular ${\cal P}$-operator is an \text{\rm r}-${\cal P}$-operator, even for compact operators in a Banach lattice (see \cite{CW0,Cheng} 
and references therein). By \cite[Thm.4(ii)]{AW}, 
there exists a compactly dominated compact operator 
on $L^2[0,1]\oplus c(L^2[0,1])$ without modulus. 
So, in terms of Proposition \ref{prop elem}~i), 
an \text{\rm r}-compact operator need not to have modulus. 
By \cite[Thm.2.2]{CW1},  
\text{\rm r}-compact (resp. \text{\rm r-w}-compact, 
\text{\rm r-LW}-, \text{\rm r-MW}-) operators from 
$E=L^2[0,1]$ to $F=c(L^2[0,1])$ form a proper 
subspace of the space of regular compact 
(resp. regular \text{\rm w}-compact, 
regular \text{\rm LW}-, regular \text{\rm MW}-) operators from $E$ to $F$. 
In view of Proposition~\ref{prop elem}~ii), the following fact 
is a direct consequence 
of \cite[Satz 7.1]{Kre2} and \cite[Thm.2.9]{CW1}.
\noindent
\begin{enumerate}[]
\item If $F$ is an \text{\rm AM}-space, then every compact 
(\text{\rm LW}-) operator 
from $E$ to $F$ is \text{\rm r}-compact (resp. \text{\rm r-LW}-).
\end{enumerate}
\noindent
The domination property for compact, \text{\rm w}-compact, 
\text{\rm LW}-, \text{\rm MW}-, and Dunford--Pettis
operators was well investigated in the last quarter of 20th century.
Recently, it was studied in \cite{AEG2,Emel,EG} 
for KB and Levi operators between locally solid lattices.

The following example goes back to \cite{Kre1} 
(see, also \cite[Ex.5.6]{AlBu},\cite{AW},\cite{AN}). 

\begin{example}\label{Krengel}
{\em Let $A_n$ be a $2^n\times 2^n$ matrix constructed inductively:
$$
A_1=\left[
\begin{matrix}
1 & 1 \\
1 & -1
\end{matrix}
\right] \quad \text{\rm and} \quad A_{n+1}=\left[
\begin{matrix}
A_n & A_n \\
A_n & -A_n
\end{matrix}
\right].
$$
For each $n$,  let $T_n:\ell^2_{2^n}\to\ell^2_{2^n}$ 
be the operator,  whose matrix (with respect to
the standard unit vectors of 
$\ell^2_{2^n}= {\mathbb R}^{2^n}$) is $2^{-\frac{n}{2}}A_n$.
Since the matrix $2^{-\frac{n}{2}}A_n$ is orthogonal, each $T_n$ is an isometry. 
On the other hand, $|T_n|$ is the operator, whose matrix is a $2^n\times 2^n$ 
matrix
with all of its entries equal to 1, and hence $\||T_n|\|=2^{\frac{n}{2}}$ 
for all $n$.
Consider the $c_0$-direct sum  
$E:=(\oplus_{n=1}^{\infty}\ell^2_{2^n})_0$. 
Note that $E'=(\oplus_{n=1}^{\infty}\ell^2_{2^n})_1$, 
in particular, $E$ and $E'$ both have $\text{\rm o}$-continuous norms. 
By \cite[Thm.4.5]{DF}, 
compact operators on $E$ satisfy the domination property.
For any $\alpha=(\alpha_n)\in\ell^\infty$, define $T:E\to E$, by
\begin{equation}\label{1kr}
   Tx=T(x_1,x_2,\dots):=(\alpha_1T_1x_1,\alpha_2T_2x_2,\dots).
\end{equation}
Then $\|T\|\le\|\alpha\|_\infty$, and if
$\alpha_n\in c_0$, then $T\in\text{\rm K}(E)$. If $\alpha_n=2^{-\frac{n}{2}}$, 
the modulus $|T|$ exists and is given by $|T|x=(\alpha_1|T_1|x_1,\alpha_2|T_2|x_2,\dots)$.
As $|T|$ is not compact, $T\in\text{\rm K}_r(E)\setminus\text{\rm r-K}(E)$
by Proposition \ref{prop elem}~ii).
For $\alpha_n=2^{-\frac{n}{3}}$, $T$ is compact but not order bounded;
so, $|T|$ does not exist.}
\end{example}

\subsection{Disjoint sequence properties.}
In this subsection we discuss some facts and results 
on Banach lattices used in the paper. The spaces of operators 
that we study here as well as the spaces serving 
as domains and/or ranges to the operators have 
many interesting disjointness properties.
We begin with the next well known fact 
(cf. \cite[Thm.4.59]{AlBu}; \cite[2.4.14]{Mey}) 
is used freely in the present paper. 

\begin{assertion}\label{E' is o-cont} 
TFAE.
\begin{enumerate}[\em (i)] 
\item The norm on $E'$ is \text{\rm o}-continuous.
\item $E'$ is a KB space.
\item Every disjoint bounded sequence in $E$ is \text{\rm w}-null.
\end{enumerate}
\end{assertion} 

Recall that $E$ has {\em sequentially} w-{\em continuous} 
(w$^\ast$-{\em continuous}) {\em lattice operations} if, for every 
w-null $(x_n)$ in $E$ (w$^\ast$-null $(x_n)$ in $E'$), 
the sequence $(|x_n|)$ is also w-null 
(w$^\ast$-null). It follows from \cite[Thm.4.34]{AlBu}) that,
for any disjoint $\text{\rm w}$-null $(x_n)$ in $E$, 
the sequence $(|x_n|)$ is \text{\rm w}-null.

\begin{definition}\label{d-property}
{\em (\cite[Def.1]{El}). $E$ is said to have the {\em property} (d) 
(shortly, $E\in\text{\rm (d)})$ if $(|f_n|)$ is also 
$\text{\rm w}^\ast$-null for each disjoint $\text{\rm w}^\ast$-null $(f_n)$ in $E'$.}
\end{definition}
\noindent
The property (d) is weaker than the sequential $\text{w}^\ast$-continuity of the lattice operations, 
as $E=\ell^{\infty}$ shows. It was already studied in \cite{Wnuk3}, 
where no name was assigned to this property. In particular, it was proved in \cite[Prop.1.4]{Wnuk3} 
that every $\sigma$-Dedekind complete Banach lattice $E$ has the property (d), and it was  
observed in \cite[Rem.1.5]{Wnuk3} that $\ell^{\infty}/c_0\in\text{\rm (d)}$, 
but $\ell^{\infty}/c_0$ is not $\sigma$-Dedekind complete. 
Example~\ref{DPSP but not DSP} below shows that $C[0,1]\not\in\text{\rm (d)}$.

\begin{definition}\label{Schur property}
{\em A Banach lattice $E$ is said to have:
\begin{enumerate}[a)]
\item 
the {\em Schur property} (shortly, $E\in(\text{\rm SP})$)
if \text{\rm w}-null sequences in $E$ are norm null (cf. \cite[p.207]{AlBu});
\item 
the {\em positive Schur property} (shortly, $E\in(\text{\rm PSP})$)
if positive \text{\rm w}-null sequences in $E$ are norm null (cf. \cite{Wnuk3});
\item 
the {\em dual positive Schur property} (shortly, $E\in(\text{\rm DPSP})$)
if positive w$^\ast$-null sequences in $E'$ are norm null \cite[Def.3.3]{AEW};
\item 
the {\em dual disjoint Schur property} (shortly, $E\in(\text{\rm DDSP})$)
if disjoint w$^\ast$-null sequences in $E'$ are norm null;
\item 
the {\em Grothendieck property} (shortly, $E\in(\text{\rm GP})$) 
if w$^\ast$-null sequences in $E'$ are w-null (cf. \cite[p.760]{Wnuk3});
\item 
the {\em positive Grothendieck property} (shortly, $E\in(\text{\rm PGP})$) 
if positive w$^\ast$-null sequences in $E$ are w-null (cf. \cite[p.760]{Wnuk3}); 
\item 
the {\em disjoint Grothendieck property} (shortly, $E\in(\text{\rm DGP})$) 
if disjoint w$^\ast$-null sequences in $E'$ are w-null;
\item 
the {\em dual disjoint \text{\rm w}$^\ast$-property} (shortly, $E\in(\text{\rm DDw$^\ast$P})$) 
if each disjoint bounded sequence in $E'$ is w$^\ast$-null. 
\end{enumerate}}
\end{definition}
\noindent
The disjoint Grothendieck property was introduced under the name the {\em weak Grothendieck
property} in \cite[Def.4.8)]{MFMA}. 

\begin{proposition}\label{DDw*P}
For every Banach lattice $E$ the following holds.
$$
   E\in(\text{\rm DGP})\cap(\text{\rm DDw}^\ast\text{\rm P})\Longrightarrow E'' 
   \text{\it \ has \text{\rm o}-continuous norm}\Longrightarrow 
   E\in(\text{\rm DDw}^\ast\text{\rm P}).
$$
\end{proposition}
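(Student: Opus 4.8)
The plan is to reduce everything to Assertion~\ref{E' is o-cont} applied not to $E$ but to the Banach lattice $E'$. Read with $E'$ in place of $E$, that assertion says that the norm on $E''=(E')'$ is \text{\rm o}-continuous if and only if every disjoint bounded sequence in $E'$ is \text{\rm w}-null. Thus both implications in the proposition become comparisons of the \text{\rm w}$^\ast$- and \text{\rm w}-behaviour of disjoint bounded sequences in $E'$, and I expect the whole proof to be a short chaining of definitions once this observation is made.

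For the first implication, I would take an arbitrary disjoint bounded sequence $(f_n)$ in $E'$. Since $E\in(\text{\rm DDw}^\ast\text{\rm P})$, the sequence $(f_n)$ is \text{\rm w}$^\ast$-null; being also disjoint, the property $(\text{\rm DGP})$ then forces $(f_n)$ to be \text{\rm w}-null. Hence every disjoint bounded sequence in $E'$ is \text{\rm w}-null, and by Assertion~\ref{E' is o-cont} (applied to $E'$) the norm on $E''$ is \text{\rm o}-continuous. For the second implication, I would assume the norm on $E''$ is \text{\rm o}-continuous; by the same Assertion~\ref{E' is o-cont} (again applied to $E'$), every disjoint bounded sequence $(f_n)$ in $E'$ is \text{\rm w}-null, i.e.\ $\sigma(E',E'')$-null. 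Since $E$ embeds canonically in $E''$, $\sigma(E',E'')$-convergence implies $\sigma(E',E)$-convergence, so $(f_n)$ is \text{\rm w}$^\ast$-null, which is exactly $E\in(\text{\rm DDw}^\ast\text{\rm P})$.

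No genuine obstacle is anticipated here: the only points that need care are to invoke Assertion~\ref{E' is o-cont} at the correct level (the dual lattice $E'$, whose own dual is $E''$) and to keep straight that \text{\rm w}-nullness of a sequence in $E'$ is formally stronger than \text{\rm w}$^\ast$-nullness because of the inclusion $E\subseteq E''$. With these in place the two implications follow immediately from the definitions of $(\text{\rm DGP})$ and $(\text{\rm DDw}^\ast\text{\rm P})$.
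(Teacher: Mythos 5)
Your proof is correct and follows essentially the same route as the paper: both implications are obtained by applying Assertion~\ref{E' is o-cont} with $E'$ in place of $E$ and then chaining the definitions of $(\text{\rm DGP})$ and $(\text{\rm DDw}^\ast\text{\rm P})$, the only cosmetic difference being that the paper argues the first implication by contradiction while you argue it directly.
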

\begin{proof}
Let $E\in(\text{\rm DGP})\cap(\text{\rm DDw$^\ast$P})$ but the norm in $E''$ 
is not \text{\rm o}-continuous.
Then $(f_n)$ is not \text{\rm w}-null for some disjoint bounded $(f_n)$ in $E'$
by Assertion \ref{E' is o-cont}. However $f_n\stackrel{\text{\rm w}^\ast}{\to}0$
because $E\in(\text{\rm DDw$^\ast$P})$, and hence 
$f_n\stackrel{\text{\rm w}}{\to}0$ due to $E\in(\text{\rm DGP})$.
The obtained contradiction proves the first implication.

For the second implication, let the norm in $E''$ be \text{\rm o}-continuous. 
Then, by Assertion \ref{E' is o-cont}, every disjoint bounded $(f_n)$ in $E'$ 
is \text{\rm w}-null and hence $\text{\rm w}^\ast$-null.  
It follows $E\in(\text{\rm DDw$^\ast$P})$.
\end{proof}

The next important fact was proved in \cite[Prop.2.3]{Wnuk3}. 
We do not know whether or not 
in this assertion (DPSP) can be replaced by (PGP) and the norm null condition by 
the w-null condition. 

\begin{assertion}\label{Wnuk2013}
$E\in \text{\rm (DPSP)}$ iff every disjoint \text{\rm w}$^\ast$-null sequence in $E'_+$ is norm null.
\end{assertion}
\noindent
Clearly, (DPSP)$\Longrightarrow$(PGP); and (DDSP)$\Longrightarrow$(DGP). In AM-spaces, (PGP) agrees with (DPSP),
by \cite[Prop.4.1]{Wnuk3}. (PSP) and (SP) coincide in the class of discrete Banach lattices \cite[p.19]{Wnuk1}.
By \cite[Thm.5.3.13]{Mey}, (PGP) and (GP) agree in the class of Banach lattices with the interpolation property. 

The following assertion \cite[Thm.3.3]{MEM} describes 
the impact of the property~(d) on dual Schur properties.

\begin{assertion}\label{DSP and (d)}
For every Banach lattice $E$ the following holds.
\begin{enumerate}[{\em i)}]
\item
$E\in(\text{\rm DDSP}) \ \Longrightarrow \ E\in(\text{\rm DPSP})$$;$
\item 
if $E$ has the property~{\em (d)} then\\ 
$E\in(\text{\rm DDSP}) \ \Longleftrightarrow \ E\in(\text{\rm DPSP})$.
\end{enumerate}
\end{assertion}

%\begin{proof}
%i) Let $E\in(\text{\rm DDSP})$ and let $(f_n)$ be a disjoint 
%positive w$^\ast$-null sequence in $E'$.
%By Definition~\ref{Schur property}\,d), $(f_n)$ is norm null. 
%Since the disjoint positive w$^\ast$-null sequence
%$(f_n)$ in $E'$ was taken arbitrary, Assertion~\ref{Wnuk2013} 
%implies that $E\in(\text{\rm DPSP})$.
%
%ii) Assume now $E\in\text{\rm (d)}$.  It suffices to prove $(\Longleftarrow)$.
%Let $E\in(\text{\rm DPSP})$ and let $(f_n)$ be 
%a disjoint w$^\ast$-null in $E'$. 
%Then $(|f_n|)$ is a disjoint positive w$^\ast$-null sequence in $E'$ 
%by the property~(d) of $E$. 
%Since $E\in\text{\rm (DPSP)}$, $(|f_n|)$ is norm null. 
%Then $(f_n)$ is also norm null, and hence $E\in(\text{\rm DDSP})$, as required. 
%\end{proof}
\noindent
Clearly $(\text{PGP})\Longrightarrow (\text{DGP})$ for Banach lattices with the property~(d).
For this implication, the property~(d) is essential, as $C[0,1]\in\text{\rm (PGP)}\setminus\text{\rm (DGP)}$
by Example~\ref{DPSP but not DSP} below.
We have no example of $E\in(\text{DGP})\setminus(\text{PGP})$.
\noindent
For the next assertion, we refer to \cite[Cor.3.6.8]{Mey}, \cite[Thm.7]{Wnuk1}.

\begin{assertion}\label{Schur}
For a Banach lattice $E$, TFAE.
\begin{enumerate}[\em (i)]
\item $E\in(\text{\rm PSP})$.
\item Each disjoint $\text{\rm w}$-null sequence in $E$ is norm null.
\item Each disjoint $\text{\rm w}$-null sequence in $E_+$ is norm null.
\item Every disjoint sequence in the solid hull of each relatively $\text{\rm w}$-compact subset of $E$ is norm-null.
\item Almost order bounded subsets of $E$ coincide with relatively $\text{\rm w}$-compact subsets of $E$.
\end{enumerate}
\end{assertion}
\noindent
By Assertion~\ref{Schur}, 
$
\text{\rm if} \   F\in\text{\rm (PSP)}\ \text{\rm then} \  
\text{\rm W}(X,F)\subseteq\text{\rm LW}(X,F).
$
\noindent
The next example shows that the property~(d) is essential 
in Assertion~\ref{DSP and (d)}(ii). 

\begin{example}\label{DPSP but not DSP}
{\em Consider the Banach lattice $E=C[0,1]$. Let $(f_n)$ be a positive w$^\ast$-null sequence in $E'$.
It follows from $\|f_n\|=f_n({\mathbb 1}_{[0,1]})\to 0$ that $C[0,1]\in(\text{\rm DPSP})$, 
and hence $C[0,1]\in(\text{\rm PGP})$.

Take the sequence $(r_n)$ of the Rademacher functions on $[0,1]$ and define a   
disjoint w$^\ast$-null sequence in $E'$ as follows:
$g_n=2^{n+1}\cdot r_{2n}\cdot {\mathbb 1}_{[\frac{1}{2^{n+1}},\frac{1}{2^n}]}$.
Then $(g_n)$ is disjoint w$^\ast$-null sequence in $E'$, 
yet $\|g_n\|=1$ for all $n\in {\mathbb N}$.
Therefore $C[0,1]\not\in(\text{DDSP})$, and hence 
by Assertion~\ref{DSP and (d)}, 
$C[0,1]\not\in(\text{d})$.
Note that $(g_n)$ is even not w-null. Indeed, for 
$
  y=\sum_{n=1}^\infty r_{2n}\cdot 
  {\mathbb 1}_{[\frac{1}{2^{n+1}}, \frac{1}{2^n}]} \in
  L^\infty[0,1] \subseteq E',
$
$y(g_n)\equiv 1$ for all $n\in {\mathbb N}$.
It also shows that $C[0,1]\not\in(\text{DGP})$.}
\end{example}
\noindent
For further unexplained terminology and notations, we refer to \cite{AlBu,AAT,AEG1,AEG2,BHM,BLM2,BLM1,EAS,Mey}.

%%%%%%%%%%%%%%%%%%%%
\section{\text{\rm a-LW}- and \text{\rm l-LW}-operators}
%%%%%%%%%%%%%%%%%%%%

Almost \text{\rm LW}- and almost \text{\rm MW}-operators were introduced in \cite{BLM1}. 
Here, apart from investigating them, we introduce and study limitedly \text{\rm LW}- 
and \text{\rm MW}-operators. They both respectively generalize \text{\rm LW}- and 
\text{\rm MW}-operators. We also show below in Proposition \ref{l-LW vs DDw*P}
that the property 
$\text{\rm DDw$^\ast$P}$ puts an end to further
extensions as when $F\in(\text{\rm DDw$^\ast$P})$
then $\text{\rm l-LW}(X,F)=\text{\rm L}(X,F)$ for any Banach space $X$.

\subsection{} 
We begin with the following definition.

\begin{definition}\label{Main LWC operators}
{\em An operator $T:X\to F$ is called:
\begin{enumerate}[i)]
\item 
{\em almost $\text{\rm LW}$} ($T$ is \text{\rm a-LW}), 
if $T$ carries relatively w-compact subsets of $X$ onto \text{\rm LW}-subsets of $F$ \cite[Def.2.1]{BLM1};
\item 
{\em limitedly $\text{\rm LW}$} ($T$ is \text{\rm l-LW}), 
if $T$ carries limited subsets of $X$ onto \text{\rm LW}-subsets of $F$.
\item 
\text{\rm bi}-{\em limitedly $\text{\rm LW}$} ($T$ is \text{\rm bi-l-LW}), 
if $T$ carries \text{\rm bi}-limited subsets of $X$ onto \text{\rm LW}-subsets of $F$.
\end{enumerate}}
\end{definition}
\noindent
Clearly, 
$\text{\rm LW}(X,F)\subseteq\text{\rm bi-l-LW}(X,F)\subseteq
\text{\rm l-LW}(X,F)\subseteq\text{\rm c-LW}(X,F)$.
Furthermore, the above inclusion turn to identities whenever $X$ is reflexive.

\begin{example}\label{l-LW operator that is not bi-l-LW}
{\em 
It was pointed out after Definition \ref{bi-limited set} that $B_{c_0}$ is not limited yet
is \text{\rm bi}-limited in $c_0$. Since $B_{c_0}$ is not an \text{\rm LW}-set in $c_0$,
$I_{c_0}\notin\text{\rm bi-l-LW}(c_0)$. However, $I_{c_0}\in\text{\rm l-LW}(c_0)$ since
each limited subset of $c_0$ is relatively compact and relatively compact subsets of $c_0$
are \text{\rm LW}-sets.}
\end{example}

\begin{definition}\label{bi-limited set}
{\em
A bounded subset $A$ of $X$ is called:
\begin{enumerate}[a)]
\item 
{\em limited} if each \text{\rm w}$^\ast$-null sequence in $X'$ is 
uniformly null on $A$. 
\item 
\text{\rm bi}-{\em limited} if each \text{\rm w}$^\ast$-null sequence in $X'''$ is 
uniformly null on $A$. 
\end{enumerate}}
\end{definition}
\noindent
It is well known that $B_X$ is not limited when $\dim(X)=\infty$, and 
$$
   A \ \text{\rm is relatively compact} \ \Rightarrow A \ \text{\rm is limited} \ \Rightarrow 
   A \ \text{\rm is bi-limited}. 
$$
Thus $B_{c_0}$ is not limited in $c_0$. However $B_{c_0}$ is \text{\rm bi}-limited in $c_0$
by Phillip's lemma (cf. \cite[Thm.4.67]{AlBu}). Every \text{\rm bi}-limited subset of a reflexive Banach
space is relatively compact.

\subsection{} 
We remind several facts on \text{\rm a-LW}-operators.
As w-compact sets are bounded, 
$\text{\it each \text{\rm LW}-operator is an \text{\rm a-LW}-operator}$.
\noindent
By \eqref{1a}, if $S\in\text{\rm W}(Y,X)$ and $T\in\text{\rm a-LW}(X,F)$ then
$T  S\in\text{\rm LW}(Y,F)$.
Furthermore, each \text{\rm a-LW}-operator $T:X\to F$ is automatically bounded. Indeed, otherwise
there exists a norm null sequence $(x_n)$ in $B_X$ satisfying $\|Tx_n\|\ge n$
for all $n\in\mathbb{N}$, violating the fact that the \text{\rm LW}-set 
$\{Tx_n: n\in\mathbb{N}\}$ is bounded. Since $\text{\rm LW}(X,F)\subseteq\text{\rm W}(X,F)$,
the identity operator $I_{\ell^1}:\ell^1\to\ell^1$ is not \text{\rm LW}. 
It was shown in \cite[Prop.1]{EAS} that
$T\in\text{\rm a-LW}(X,F)$ iff $T(X)\subseteq F^a$ and $f_n(Tx_n)\to 0$ 
for each disjoint bounded $(f_n)$ in $F'$ and each w-null $(x_n)$ in $X$.

\bigskip 
\noindent
Since ${\ell^1}$ has the Schur property, 
$I_{\ell^1}\in\text{\rm a-LW}(\ell^1)\setminus\text{\rm W}(\ell^1)$. 
It was proved in \cite[Thm.2.4]{BLM1}, that 
\begin{equation}\label{3a}
   T\in\text{\rm a-LW}(X,F) \ \text{\rm iff} \ T  S\in\text{\rm LW}(Y,F) \ 
   \text{\rm for every} \ Y \ \text{\rm and all} \ S\in\text{\rm W}(Y,X).
\end{equation}
\begin{lemma}\label{Composition of LW}
For a pair of operators $T$ and $S$$:$ 
\begin{enumerate}[\em (i)]
\item if $S\in\text{\rm LW}(X,E)$ and $T\in\text{\rm a-LW}(E,F)$ then $T  S\in\text{\rm LW}(X,F)$$;$
\item if $S\in\text{\rm L}(X,Y)$ and $T\in\text{\rm a-LW}(Y,F)$ then $T  S\in\text{\rm a-LW}(Y,F)$.
\end{enumerate}
\end{lemma}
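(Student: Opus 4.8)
The plan is to prove both parts by reducing to the defining disjoint-sequence condition for $\text{\rm LW}$-operators, namely that an operator $R:Z\to F$ has $R(B_Z)$ an $\text{\rm LW}$-set iff every disjoint sequence in $\text{\rm sol}(R(B_Z))$ is norm null, and to exploit the composition fact \eqref{3a} together with the characterization of $\text{\rm a-LW}$-operators recalled above (from \cite[Prop.1]{EAS}): $R\in\text{\rm a-LW}(E,F)$ iff $R(E)\subseteq F^a$ and $g_n(Rz_n)\to 0$ for every disjoint bounded $(g_n)$ in $F'$ and every w-null $(z_n)$ in $E$.

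For part (i): since $S\in\text{\rm LW}(X,E)\subseteq\text{\rm W}(X,E)$, the operator $S$ is in particular weakly compact from $X$ to $E$. Then \eqref{3a}, applied with the roles $Y=X$, $X\rightsquigarrow E$, and the $\text{\rm a-LW}$-operator $T\in\text{\rm a-LW}(E,F)$, yields immediately that $T S\in\text{\rm LW}(X,F)$. That is the whole argument; the only thing to check is that the hypotheses of \eqref{3a} match, which they do since every $\text{\rm LW}$-operator is bounded and weakly compact.

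For part (ii): I would argue directly from the $\text{\rm a-LW}$ characterization. Let $C\subseteq X$ be relatively w-compact; then $S(C)$ is relatively w-compact in $Y$ because $S\in\text{\rm L}(X,Y)$ is norm-to-norm continuous, hence weak-to-weak continuous, and continuous images of relatively w-compact sets are relatively w-compact. Since $T\in\text{\rm a-LW}(Y,F)$, the set $T(S(C)) = (TS)(C)$ is an $\text{\rm LW}$-subset of $F$. As $C$ was an arbitrary relatively w-compact subset of $X$, this says exactly that $TS$ carries relatively w-compact subsets of $X$ onto $\text{\rm LW}$-subsets of $F$, i.e. $TS\in\text{\rm a-LW}(X,F)$. (The statement as printed says $TS\in\text{\rm a-LW}(Y,F)$, which is presumably a typo for $\text{\rm a-LW}(X,F)$; I would write the proof for the domain $X$.)

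Neither part presents a real obstacle — the content is entirely bookkeeping with \eqref{3a} and the definitions. The one point requiring a moment's care is the reduction in part (ii): one must note that a bounded (norm-continuous) operator is automatically weak-to-weak continuous, so that $\overline{S(C)}^{\,\text{w}}$ is w-compact whenever $\overline{C}^{\,\text{w}}$ is, and that $(TS)(C)\subseteq T(\overline{S(C)}^{\,\text{w}})$ being an $\text{\rm LW}$-set forces the subset $(TS)(C)$ to be an $\text{\rm LW}$-set as well, since subsets of $\text{\rm LW}$-sets are $\text{\rm LW}$-sets (the solid hull only shrinks, and a disjoint sequence in a smaller solid hull is a disjoint sequence in the larger one).
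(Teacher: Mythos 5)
Your proof is correct and follows essentially the same route as the paper: part (i) is exactly the application of \eqref{3a} using that \text{\rm LW}-operators are \text{\rm w}-compact, and part (ii) is the paper's one-line observation that bounded operators are weak-to-weak continuous, so they carry relatively \text{\rm w}-compact sets to relatively \text{\rm w}-compact sets. Your reading of the codomain notation in (ii) as a typo for $\text{\rm a-LW}(X,F)$ is also the right interpretation.
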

\begin{proof}
(i) follows from \eqref{3a} because \text{\rm LW}-operators are \text{\rm w}-compact. 

(ii) Because every bounded operator is \text{\rm w}-continuous.
\end{proof}

\begin{proposition}\label{LW-algebra}
For any Banach lattice $E$ the following hold.
\begin{enumerate}[{\rm i)}]
\item $\text{\rm LW}(E)$ is a closed subalgebra of $\text{\rm W}(E)$ and is unital 
iff the identity operator $I_E$ is \text{\rm LW}.
\item $\text{\rm a-LW}(E)$ is a closed right ideal in $\text{\rm L}(E)$ 
$($and hence a subalgebra of $\text{\rm L}(E)$$)$, and is unital iff $I_E$ is {\em a-LW}.
\end{enumerate}
\end{proposition}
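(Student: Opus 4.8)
\emph{Proof plan.} The plan is to show, for each of the two classes, that it is a linear subspace of $\text{\rm L}(E)$, that it is norm-closed, and that it absorbs the relevant compositions, and then to settle the two unital statements by evaluating a hypothetical unit on rank-one operators.

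First I would record an elementary fact used throughout: a finite sum, and any scalar multiple, of \text{\rm LW}-subsets of a Banach lattice is again an \text{\rm LW}-subset. For the sum this follows from Assertion~\ref{Meyer 3.6.2}(ii): given $\varepsilon>0$, pick $u_1,u_2\in F_+^a$ with $A_i\subseteq[-u_i,u_i]+\tfrac\varepsilon2 B_F$, and note $A_1+A_2\subseteq[-(u_1+u_2),u_1+u_2]+\varepsilon B_F$ with $u_1+u_2\in F_+^a$; the scalar case is immediate. Since, moreover, \text{\rm a-LW}-operators are automatically bounded, this already shows that $\text{\rm LW}(E)$ and $\text{\rm a-LW}(E)$ are linear subspaces of $\text{\rm L}(E)$. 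Norm-closedness of both classes I would prove by one argument: let $T_n\to T$ in operator norm with each $T_n$ in the class, and let $C$ be a set to be tested ($C=B_E$ for \text{\rm LW}, $C$ relatively \text{\rm w}-compact hence bounded for \text{\rm a-LW}). For $\varepsilon>0$ choose $n$ with $(T-T_n)(C)\subseteq\tfrac\varepsilon2 B_E$ and $u_\varepsilon\in E_+^a$ with $T_n(C)\subseteq[-u_\varepsilon,u_\varepsilon]+\tfrac\varepsilon2 B_E$; then $T(C)\subseteq[-u_\varepsilon,u_\varepsilon]+\varepsilon B_E$, so $T(C)$ is an \text{\rm LW}-set by Assertion~\ref{Meyer 3.6.2}, and $T$ lies in the class.

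For the multiplicative structure I would use the composition lemmas. For i): if $S,T\in\text{\rm LW}(E)$ then $S\in\text{\rm LW}(E,E)$ and $T\in\text{\rm a-LW}(E,E)$ (each \text{\rm LW}-operator is \text{\rm a-LW}), so $TS\in\text{\rm LW}(E)$ by Lemma~\ref{Composition of LW}(i); since $\text{\rm LW}(E)\subseteq\text{\rm W}(E)$, this exhibits $\text{\rm LW}(E)$ as a closed subalgebra of $\text{\rm W}(E)$. For ii): if $T\in\text{\rm a-LW}(E)$ and $S\in\text{\rm L}(E)$, then $S$ is \text{\rm w}-continuous and hence carries relatively \text{\rm w}-compact sets into relatively \text{\rm w}-compact sets, so $(TS)(C)=T\bigl(S(C)\bigr)$ is an \text{\rm LW}-set; thus $TS\in\text{\rm a-LW}(E)$ (Lemma~\ref{Composition of LW}(ii)), i.e.\ $\text{\rm a-LW}(E)$ is a right ideal of $\text{\rm L}(E)$, and therefore a subalgebra.

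It remains to identify when these algebras are unital, and this is where I expect the main obstacle. One implication is easy in both cases: $I_E\in\text{\rm LW}(E)$ is obviously a unit for $\text{\rm LW}(E)$, while if $I_E\in\text{\rm a-LW}(E)$ then the right-ideal property gives $S=I_E S\in\text{\rm a-LW}(E)$ for every $S\in\text{\rm L}(E)$, so $\text{\rm a-LW}(E)=\text{\rm L}(E)$, which is unital. For the converse I would test a putative unit $U$ against rank-one operators. If $E^a=\{0\}$, then, since singletons are relatively \text{\rm w}-compact, every operator in either class sends $E$ into $E^a=\{0\}$, so the algebra reduces to $\{0\}$, which we regard as non-unital; this matches $I_E\notin\text{\rm LW}(E)$ for $E\neq\{0\}$. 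So assume $E^a\neq\{0\}$ and fix $x_0\in E_+^a\setminus\{0\}$. For every $f\in E'$ the rank-one operator $f\otimes x_0\colon y\mapsto f(y)x_0$ maps each test set $C$ into $[-\lambda x_0,\lambda x_0]$ for a suitable $\lambda>0$ (by boundedness of $C$), and $[-\lambda x_0,\lambda x_0]$ is an \text{\rm LW}-set because $\lambda x_0\in E_+^a$ (take $u_\varepsilon=\lambda x_0$ in Assertion~\ref{Meyer 3.6.2}(ii)); hence $f\otimes x_0$ belongs both to $\text{\rm LW}(E)$ and to $\text{\rm a-LW}(E)$. If $U$ is a unit of the algebra under consideration, then $(f\otimes x_0)\circ U=f\otimes x_0$; but $(f\otimes x_0)\circ U=(U'f)\otimes x_0$, and since $x_0\neq0$ this forces $U'f=f$ for all $f\in E'$, i.e.\ $U'=I_{E'}$, whence $U=I_E$. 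Thus $I_E$ lies in the algebra, and both equivalences follow.
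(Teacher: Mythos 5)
Your proposal is correct, and on the structural part (linearity, norm-closedness, the multiplicative absorption via Lemma~\ref{Composition of LW}) it follows the same route as the paper, which however outsources the closedness of $\text{\rm LW}(E)$ and $\text{\rm a-LW}(E)$ to a cited result rather than rerunning the $u_\varepsilon$-argument from Assertion~\ref{Meyer 3.6.2}(ii) as you do; both are fine. Where you genuinely add something is the unital equivalence: the paper declares it ``simply verified,'' which is only honest if one reads ``unital'' as ``contains $I_E$.'' You instead treat the nontrivial direction properly --- an abstract unit $U$ of the subalgebra need not a priori be $I_E$ --- and your rank-one test is the right tool: for $x_0\in E_+^a\setminus\{0\}$ the operator $f\otimes x_0$ lies in both classes (its range of any bounded set sits in $[-\lambda x_0,\lambda x_0]$ with $\lambda x_0\in E_+^a$), and $(f\otimes x_0)\circ U=(U'f)\otimes x_0$ forces $U'=I_{E'}$, hence $U=I_E$. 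The only soft spot is the degenerate case $E^a=\{0\}$, $E\neq\{0\}$, where both classes reduce to $\{0\}$; whether the zero algebra counts as unital is a convention, and the proposition is only literally true under the convention you adopt (non-unital), so it is worth stating that convention explicitly, as you do. Everything else checks out.
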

\begin{proof}
It is easy to see that both $\text{\rm LW}(E)$ and $\text{\rm a-LW}(E)$ are 
closed subspaces of $\text{\rm L}(E)$ 
(see, for example, \cite[Prop.2.1]{BLM1}), and $\text{\rm LW}(E)\subseteq\text{\rm W}(E)$. 
The closeness of $\text{\rm LW}(E)$ under composition follows from Lemma \ref{Composition of LW}(i)(ii).
The fact that $\text{\rm a-LW}(E)$ is a right ideal in $\text{\rm L}(E)$ follows from Lemma \ref{Composition of LW}(ii).
The conditions on $I_E$, those make the algebras $\text{\rm LW}(E)$ and $\text{\rm a-LW}(E)$ unital,
are simply verified.
\end{proof}
\noindent
As $\text{\rm a-LW}(E,F)$ satisfies the domination property by \cite[Thm.1]{AkGo},
the next lemma follows from Proposition~\ref{prop elem}~ii).

\begin{lemma}\label{prop elem aLW}
Let an operator $T:E\to F$ possess the modulus. TFAE.
\begin{enumerate}[{\em i)}]
\item $T$ is a regularly $\text{\rm a-LW}$-operator.
\item $|T|$ is an $\text{\rm a-LW}$-operator.
\end{enumerate}
\end{lemma}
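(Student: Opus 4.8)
The plan is to deduce both implications at once from Proposition~\ref{prop elem}~ii), applied with $\mathcal{P}=\text{\rm a-LW}(E,F)$. So I would first verify the structural hypothesis $\mathcal{P}\pm\mathcal{P}\subseteq\mathcal{P}\neq\emptyset$: the set $\text{\rm a-LW}(E,F)$ is a linear subspace of $\text{\rm L}(E,F)\subseteq\mathcal{L}(E,F)$, since it contains $0$ and a sum or difference of two operators carrying relatively w-compact subsets of $E$ into \text{\rm LW}-subsets of $F$ does the same — finitely many \text{\rm LW}-sets have an \text{\rm LW}-set as their (algebraic) sum, cf. \cite[Prop.2.1]{BLM1}. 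Hence $\mathcal{P}$ satisfies the conditions imposed on it in Proposition~\ref{prop elem}.

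Next, Proposition~\ref{prop elem}~ii) also requires that the modulus $|T|$ exist in $\mathcal{L}(E,F)$ and that $\mathcal{P}$-operators satisfy the domination property. The first requirement is exactly the standing hypothesis of the lemma, so nothing is needed there. The second — that $0\le S\le T\in\text{\rm a-LW}(E,F)$ forces $S\in\text{\rm a-LW}(E,F)$ — is \cite[Thm.1]{AkGo}, which I would simply invoke.

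With all hypotheses in place, Proposition~\ref{prop elem}~ii) yields directly that $T$ is a difference of two positive \text{\rm a-LW}-operators, i.e. a regularly \text{\rm a-LW}-operator in the sense of Definition~\ref{rP-operators}, if and only if $|T|$ is an \text{\rm a-LW}-operator; this is precisely the asserted equivalence (i)\,$\Leftrightarrow$\,(ii). There is no genuine obstacle in the deduction itself: the entire weight of the argument rests on the domination property for \text{\rm a-LW}-operators, and that fact is quoted rather than reproved here.
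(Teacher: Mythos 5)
Your proposal is correct and follows essentially the same route as the paper: the paper's proof is exactly the observation that $\text{\rm a-LW}(E,F)$ satisfies the domination property by \cite[Thm.1]{AkGo}, so the lemma follows from Proposition~\ref{prop elem}~ii). Your extra verification that $\text{\rm a-LW}(E,F)$ is closed under sums and differences (needed for the hypothesis ${\cal P}\pm{\cal P}\subseteq{\cal P}$, and in particular for the step $|T|=T_++T_-$) is a reasonable bit of added care, but it is the same argument.
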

\noindent 
We also need the following fact \cite[Lm.1]{Emel}.

\begin{assertion}\label{P-norm}
Let ${\rm P}$ be a closed subspace of $\text{\rm L}(E)$ in operator norm.
Then the formula 
$\|T\|_\text{\rm r-P}=\inf\{\|S\|:\pm T\le S\in{\rm P}\}$
defines a norm on the space $\text{\rm r-P}(E)$. Furthermore,
$\|T\|_{\text{\rm r-P}}\ge\|T\|_r\ge\|T\|$ for all $T\in\text{\rm r-P}(E)$, and  
$(\text{\rm r-P}(E),\|\cdot\|_{\text{\rm r-P}})$ \ is a Banach space.
\end{assertion}

\begin{theorem}\label{a-LW-Riesz-algebra}
For a Banach lattice $E$ the following hold. 
\begin{enumerate}[{\em i)}]
\item 
The space $\text{\rm r-a-LW}(E)$ of regularly 
$\text{\rm a-LW}$-operators on $E$ is a subalgebra 
of $\text{\rm L}_r(E)$. Moreover, 
$
   \text{\rm r-a-LW}(E)=\text{\rm L}_r(E) \ \Longleftrightarrow I_E\in\text{\rm a-LW}(E).
$ 
\item 
If, additionally $E$ is Dedekind complete then 
$(\text{\rm r-a-LW}(E), \ \|\cdot\|_{\text{\rm r-a-LW}})$
is a closed Riesz subalgebra and an order ideal of the Banach lattice algebra 
$(\text{\rm L}_r(E), \ \|\cdot\|_r)$.
\end{enumerate}
\end{theorem}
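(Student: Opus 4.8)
The plan is to prove i) and ii) in turn, using as ingredients Proposition~\ref{prop elem}, Proposition~\ref{vect lat}, Lemma~\ref{Composition of LW}(ii) and Assertion~\ref{P-norm}, together with the two background facts recalled just before the statement: $\text{\rm a-LW}(E)$ is a linear subspace of $\text{\rm L}(E)$, it is closed for the operator norm, and it satisfies the domination property (the last by \cite[Thm.1]{AkGo}).

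For i), I would first note that $\text{\rm r-a-LW}(E)$ is a linear subspace of $\text{\rm L}_r(E)$, being the set of differences of positive operators from the subspace $\text{\rm a-LW}(E)$. To get closure under composition, write $S=S_1-S_2$ and $T=T_1-T_2$ with $S_i,T_i$ positive $\text{\rm a-LW}$-operators, expand $TS=(T_1S_1+T_2S_2)-(T_1S_2+T_2S_1)$, and observe that each of the four products $T_iS_j$ is positive and, by Lemma~\ref{Composition of LW}(ii) (an $\text{\rm a-LW}$-operator composed with a bounded operator), again an $\text{\rm a-LW}$-operator; hence $TS\in\text{\rm r-a-LW}(E)$. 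For the displayed equivalence, if $I_E$ is $\text{\rm a-LW}$ then any regular $T=T_1-T_2$ with $T_i\ge 0$ has $T_i=I_ET_i\in\text{\rm a-LW}(E)$ by Lemma~\ref{Composition of LW}(ii), so $T\in\text{\rm r-a-LW}(E)$; thus $\text{\rm L}_r(E)\subseteq\text{\rm r-a-LW}(E)$, the reverse inclusion being trivial. Conversely, $\text{\rm r-a-LW}(E)=\text{\rm L}_r(E)$ forces $I_E\in\text{\rm r-a-LW}(E)$, and Proposition~\ref{prop elem}(i) then says $I_E$ is in particular an $\text{\rm a-LW}$-operator.

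For ii), with $E$ Dedekind complete, I would recall that $\text{\rm L}_r(E)$ is then a Dedekind complete vector lattice (so every regular operator has a modulus) and that $(\text{\rm L}_r(E),\|\cdot\|_r)$ is a Banach lattice algebra (cf. \cite{AlBu}). Applying Proposition~\ref{vect lat} with $\mathcal{P}=\text{\rm a-LW}(E)$ gives at once that $\text{\rm r-a-LW}(E)$ is an order ideal — hence a Riesz subspace — of $\text{\rm L}_r(E)$; together with i) this makes it a Riesz subalgebra and an order ideal. Assertion~\ref{P-norm}, applied to the operator-norm-closed subspace $\text{\rm a-LW}(E)$, yields that $(\text{\rm r-a-LW}(E),\|\cdot\|_{\text{\rm r-a-LW}})$ is a Banach space with $\|\cdot\|_{\text{\rm r-a-LW}}\ge\|\cdot\|_r\ge\|\cdot\|$.

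The one step that needs genuine care — and the main obstacle — is the $\|\cdot\|_r$-closedness of $\text{\rm r-a-LW}(E)$ in $\text{\rm L}_r(E)$. Here I would argue: if $T_n\in\text{\rm r-a-LW}(E)$ and $\|T_n-T\|_r\to 0$ with $T\in\text{\rm L}_r(E)$, then by Proposition~\ref{prop elem}(ii) each modulus $|T_n|$ is an $\text{\rm a-LW}$-operator; the Riesz-space inequality $\bigl|\,|T_n|-|T|\,\bigr|\le|T_n-T|$ gives $\||T_n|-|T|\|\le\|T_n-T\|_r\to 0$, so $|T_n|\to|T|$ in operator norm; since $\text{\rm a-LW}(E)$ is operator-norm closed, $|T|\in\text{\rm a-LW}(E)$, and then Proposition~\ref{prop elem}(ii) in the other direction gives $T\in\text{\rm r-a-LW}(E)$. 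The essential features being exploited are the domination property (needed to invoke Proposition~\ref{prop elem}(ii) both ways) and the fact that the $r$-norm dominates the operator norm on moduli; everything else is routine assembly of the cited results.
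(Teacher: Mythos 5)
Your argument is correct, and for part i) and for the order-theoretic half of part ii) it runs along essentially the same lines as the paper: Lemma~\ref{Composition of LW}(ii) gives the algebra structure (the paper phrases this as $\text{\rm r-a-LW}(E)$ being a right algebra ideal, which needs the same four-term decomposition you write out), and Lemma~\ref{prop elem aLW} together with Proposition~\ref{vect lat} gives the Riesz subalgebra and order ideal claims. Where you genuinely diverge is the closedness of $\text{\rm r-a-LW}(E)$ in $(\text{\rm L}_r(E),\|\cdot\|_r)$: you prove it directly via $\bigl|\,|T_n|-|T|\,\bigr|\le|T_n-T|$, the operator-norm closedness of $\text{\rm a-LW}(E)$, and Lemma~\ref{prop elem aLW} applied in both directions, whereas the paper first observes that for $T\in\text{\rm r-a-LW}(E)$ one has $\pm T\le|T|\in\text{\rm a-LW}(E)$, hence $\|T\|_{\text{\rm r-a-LW}}\le\|\,|T|\,\|=\|T\|_r$, so that $\|\cdot\|_{\text{\rm r-a-LW}}$ coincides with $\|\cdot\|_r$ on $\text{\rm r-a-LW}(E)$, and then gets closedness from the completeness supplied by Assertion~\ref{P-norm}; the paper also records monotonicity and submultiplicativity of this norm. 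Your sequential argument is more self-contained for the closedness step (it does not need Assertion~\ref{P-norm} there at all), while the paper's route buys the extra information that the norm $\|\cdot\|_{\text{\rm r-a-LW}}$ named in the statement is just the restriction of the regular norm. Since the theorem attaches that norm to $\text{\rm r-a-LW}(E)$, you should add the one-line observation $\|T\|_{\text{\rm r-a-LW}}\le\|\,|T|\,\|=\|T\|_r$ (immediate from your own remark that $|T|\in\text{\rm a-LW}(E)$ dominates $\pm T$), which upgrades the inequality $\|\cdot\|_{\text{\rm r-a-LW}}\ge\|\cdot\|_r$ from Assertion~\ref{P-norm} to equality; with that addition your proposal establishes everything the paper's proof does.
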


\begin{proof}
i) Since regular operators on $E$ are bounded, it follows from Lemma~\ref{Composition of LW}(ii)
that $\text{\rm r-a-LW}(E)$ is a right algebra ideal and hence 
is a subalgebra of $\text{\rm L}_r(E)$. 
The condition on $I_E$ for 
$\text{\rm r-a-LW}(E)=\text{\rm L}_r(E)$ is obvious.

ii) It follows from Lemma~\ref{prop elem aLW} 
that the subalgebra $\text{\rm r-a-LW}(E)$ 
is closed under lattice operations in $\text{\rm L}_r(E)$, and hence 
is a Riesz subalgebra of $\text{\rm L}_r(E)$. Proposition \ref{vect lat} implies 
that $\text{\rm r-a-LW}(E)$ is an order ideal of $\text{\rm L}_r(E)$.

Let $S,T\in\text{\rm r-a-LW}(E)$ satisfy $|S|\le|T|$.
In order to show $\|S\|_{\text{\rm r-a-LW}(E)}\le\|T\|_{\text{\rm r-a-LW}(E)}$,
observe that $|S|,|T|\in\text{\rm a-LW}(E)$ by Lemma \ref{prop elem aLW}. Then 
$$
   \|S\|_{\text{\rm r-a-LW}(E)}=\|~|S|~\|\le\|~|T|~\|=\|T\|_{\text{\rm r-a-LW}(E)}.
$$
In particular $\|\cdot\|_{\text{\rm r-a-LW}(E)}$ coincide on $\text{\rm r-a-LW}(E)$ 
with the regular norm $\|\cdot\|_r$, 
since $\|T\|_r=\|~|T|~\|$ for every $T\in\text{\rm L}_r(E)$ \cite[p.255]{AlBu}.

By Assertion \ref{P-norm}, it follows from \cite[Prop.2.1]{BLM1}
that $(\text{\rm r-a-LW}(E), \ \|\cdot\|_{\text{\rm r-a-LW}})$ is a Banach space.
It remains to show that the norm $\|\cdot\|_{\text{\rm r-a-LW}}$
is submultiplicative. Let $S,T\in{\text{\rm r-a-LW}}(E)$.
Since $\pm S\le|S|\in\text{\rm a-LW}(E)$ and $\pm T\le|T|\in\text{\rm a-LW}(E)$ then
$\pm ST\le|ST|\le|S|\cdot|T|$, and hence 
$$
   \|S  T\|_{\text{\rm r-a-LW}(E)}\le\|~|S||T|~\|\le
   \|~|S|~\|\cdot\|~|T|~\|=
\|S\|_{\text{\rm r-a-LW}(E)}\cdot\|T\|_{\text{\rm r-a-LW}(E)},
$$
as desired.
\end{proof}

\subsection{l-LW-operators.} 
Here we include basic properties of \text{\rm l-LW}-operators.

\begin{lemma}\label{l-LW-operators}
TFAE.
\begin{enumerate}[\em (i)]
\item $T\in\text{\rm l-LW}(X,F)$.
\item $T'f_n\stackrel{\text{\rm w}^\ast}{\to}0$ in $X'$ for each disjoint bounded $(f_n)$ in $F'$.
\end{enumerate}
\end{lemma}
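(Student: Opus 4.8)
The plan is to reduce the statement to Definition~\ref{Main LWC operators}(ii) via a characterization of \text{\rm LW}-subsets of $F$ and the duality between limited subsets of $X$ and $\text{\rm w}^\ast$-null sequences in $X'$. First I would establish (i)$\Rightarrow$(ii). Fix a disjoint bounded sequence $(f_n)$ in $F'$; I want $T'f_n\xrightarrow{\text{\rm w}^\ast}0$, i.e. $f_n(Tx)\to 0$ for each $x\in X$. Since the singleton $\{x\}$ is limited in $X$ (indeed every relatively compact, in particular finite, set is limited), hypothesis (i) gives that $\{Tx\}$ is an \text{\rm LW}-subset of $F$; then Assertion~\ref{Meyer 3.6.2}(iii) (applied to the one-point set) yields $f_n(Tx)\to 0$. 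Hence $T'f_n\xrightarrow{\text{\rm w}^\ast}0$.

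For the converse (ii)$\Rightarrow$(i), let $A\subseteq X$ be limited; I must show $T(A)$ is an \text{\rm LW}-subset of $F$, and by Assertion~\ref{Meyer 3.6.2}(iii) it suffices to prove that $g_n(Tx_n)\to 0$ for every sequence $(x_n)$ in $A$ and every disjoint bounded $(g_n)$ in $F'$. By hypothesis (ii), $T'g_n\xrightarrow{\text{\rm w}^\ast}0$ in $X'$, that is $(T'g_n)$ is a $\text{\rm w}^\ast$-null sequence in $X'$. Since $A$ is limited, by Definition~\ref{bi-limited set}(a) the sequence $(T'g_n)$ is uniformly null on $A$, i.e. $\sup_{x\in A}|(T'g_n)(x)|=\sup_{x\in A}|g_n(Tx)|\to 0$. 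In particular $g_n(Tx_n)\to 0$, as required. (One should also note $T(A)$ is bounded: $T$ is bounded — this follows exactly as for \text{\rm a-LW}-operators, since otherwise the unbounded \text{\rm LW}-set $\{Tx_n\}$ along a norm-null sequence gives a contradiction — and bounded operators map bounded sets to bounded sets; alternatively boundedness of $T$ can be taken as part of the standing hypothesis on the operators in Definition~\ref{Main LWC operators}.)

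The main structural point is the interchange of quantifiers hidden in Assertion~\ref{Meyer 3.6.2}(iii): reducing "$T(A)$ is an \text{\rm LW}-set" to the condition "$g_n(Tx_n)\to 0$ for all sequences $(x_n)$ in $A$ and all disjoint bounded $(g_n)$" is precisely what lets the limitedness of $A$ interact with a single fixed disjoint sequence $(g_n)$ in $F'$ rather than with a family depending on the points of $A$. Once that reduction is in place, both directions are short: (i)$\Rightarrow$(ii) only tests against singletons, and (ii)$\Rightarrow$(i) is the definition of "limited" applied to the $\text{\rm w}^\ast$-null sequence $(T'g_n)$. I do not anticipate a genuine obstacle; the only care needed is to state Assertion~\ref{Meyer 3.6.2}(iii) in the form "$f_n(x_n)\to 0$ for every $(x_n)$ in $A$ and every disjoint bounded $(f_n)$ in $F'$", which is exactly condition (iii) there, and to recall that $T'(B_{F'})$ being bounded makes all the $\text{\rm w}^\ast$-null/uniform-convergence manipulations legitimate.
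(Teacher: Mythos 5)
Your proposal is correct and follows essentially the same route as the paper: one direction tests the hypothesis on singletons (which are limited) to get $f_n(Tx)\to 0$ for each $x$, and the other uses that limitedness of $A$ forces the $\text{\rm w}^\ast$-null sequence $(T'g_n)$ to be uniformly null on $A$; the paper phrases the converse as a contradiction via Assertion~\ref{Burkinshaw--Dodds}, while you argue directly via Assertion~\ref{Meyer 3.6.2}(iii), which is only a cosmetic difference. Your remark on the implicit boundedness of $T$ (needed for $T'$ and for $T(A)$ to be bounded) is a sensible addition and does not conflict with the paper's argument.
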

\begin{proof}
(i)$\Longrightarrow$(ii): Let $T\in\text{\rm l-LW}(X,F)$, $(f_n)$ be disjoint bounded in $F'$,
and $x\in X$. As $\{Tx\}$ is an $\text{\rm LW}$-subset of $F$ then
$T'f_n(x)=f_n(Tx)\to 0$ by Assertion \ref{Burkinshaw--Dodds}. Since $x\in X$
is arbitrary, $(T'f_n)$ is $\text{\rm w}^\ast$-null.

(ii)$\Longrightarrow$(i): Let $T'f_n\stackrel{\text{\rm w}^\ast}{\to}0$ 
for each disjoint bounded $(f_n)$ in $F'$. Suppose
$T\not\in\text{\rm l-LW}(X,F)$. So, $T(L)$ is not an $\text{\rm LW}$-subset of $F$
for some nonempty limited subset $L$ of $X$. Then there exists a disjoint sequence
$(g_n)$ in $B_{F'}$, and it is not uniformly null on $T(L)$ by Assertion \ref{Burkinshaw--Dodds}.
Therefore, $(T'g_n)$ is not uniformly null on $L$ violating that
$T'g_n\stackrel{\text{\rm w}^\ast}{\to}0$ and that $L$ is limited subset of $X$.
The obtained contradiction proves the implication.
\end{proof}

\begin{proposition}\label{l-LW-domination}
Let $0\le S\le T\in\text{\rm l-LW}(E,F)$ then $S\in\text{\rm l-LW}(E,F)$.
\end{proposition}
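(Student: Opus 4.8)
The plan is to use the dual characterization of \text{\rm l-LW}-operators from Lemma~\ref{l-LW-operators}, namely that $R\in\text{\rm l-LW}(E,F)$ if and only if $R'f_n\xrightarrow{\text{\rm w}^\ast}0$ in $E'$ for every disjoint bounded sequence $(f_n)$ in $F'$. So I would fix a disjoint bounded sequence $(f_n)$ in $F'$ and aim to show $S'f_n\xrightarrow{\text{\rm w}^\ast}0$, i.e.\ $f_n(Sx)\to 0$ for each $x\in E$. Since $0\le S\le T$, it suffices to control $f_n$ on the positive part, so I would reduce to showing $f_n(Su)\to 0$ for each $u\in E_+$; the general case follows by writing $x=x_+-x_-$.

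The first substantive step is to invoke Lemma~\ref{l-LW-operators} in the other direction: since $T\in\text{\rm l-LW}(E,F)$, we have $T'f_n\xrightarrow{\text{\rm w}^\ast}0$ in $E'$, so in particular $T'|f_n|$ need not converge, but $|T'f_n|\le T'|f_n|$ does not immediately help. The cleaner route is to note that by Assertion~\ref{singletons}, $\text{\rm l-LW}(E,F)\subseteq\text{\rm c-LW}(E,F)$, so $T(E)\subseteq F^a$, whence also $S(E)\subseteq F^a$ because $0\le Su\le Tu$ for $u\in E_+$ and $F^a$ is an order ideal; thus $S\in\text{\rm c-LW}(E,F)$ as well, which recovers condition (ii)--(iv) of Assertion~\ref{singletons} for $S$ at the level of singletons but \emph{not} yet the disjoint-sequence uniformity we need. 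The key inequality to push through is the estimate that for $u\in E_+$ and any $g\in F'$, $|g(Su)|\le |g|(Su)\le |g|(Tu)=(T'|g|)(u)$; applying this with $g=f_n$ gives $|f_n(Su)|\le (T'|f_n|)(u)$, so it suffices to show $(T'|f_n|)(u)\to 0$ for each fixed $u\in E_+$. Equivalently, I need: whenever $(f_n)$ is disjoint bounded in $F'$ and $T\in\text{\rm l-LW}(E,F)$, then $(T'|f_n|)(u)\to 0$ for every $u\in E_+$.

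To get this last fact, I would use that $(|f_n|)$ need not be disjoint, so Lemma~\ref{l-LW-operators} does not apply directly to $(|f_n|)$. Instead I would argue via the solid hull / Assertion~\ref{Burkinshaw--Dodds}: the condition $T\in\text{\rm l-LW}(E,F)$ applied to the singleton $L=\{u\}$ (which is trivially limited) says exactly that every disjoint sequence in $\text{\rm sol}(\{Tu\})$ is norm null, i.e.\ $\{Tu\}$ is an \text{\rm LW}-set in $F$; but by Lemma~\ref{just lemma} (duality of \text{\rm LW}-sets), equivalently every disjoint bounded sequence in $F'$ is uniformly null on $\{Tu\}$, hence $f_n(Tu)\to 0$ \emph{and} $|f_n|$... —here is the actual mechanism: $\{Tu\}$ being an \text{\rm LW}-set means $Tu\in F^a$, and then for the disjoint bounded $(f_n)$ in $F'$ we have $|f_n|(|Tu|)\to 0$ is \emph{not} automatic, but what \emph{is} automatic from $Tu\in F^a$ plus disjointness of $(f_n)$ is $f_n(Tu)\to 0$; to also kill $(T'|f_n|)(u)=|f_n|(Tu)$ I would instead appeal to the fact that a disjoint sequence in the \emph{order interval} $[0,Tu]$ is norm null (since $Tu\in F^a$) together with the standard Meyer--Nieberg lemma computing $|f_n|(Tu)=\sup\{f_n(v): |v|\le Tu\}$ along a disjoint ``chopping'' of $Tu$—this is the step I expect to be the main obstacle, since it requires reconstructing the near-disjointness argument that lets one replace $|f_n|$ acting on the single vector $Tu$ by $f_n$ acting on an almost-disjoint sequence dominated by $Tu$.

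Once $(T'|f_n|)(u)\to 0$ is established for each $u\in E_+$, the proof closes: for arbitrary $x\in E$, $|S'f_n(x)|=|f_n(Sx)|\le |f_n|(S|x|)\le (T'|f_n|)(|x|)\to 0$, so $S'f_n\xrightarrow{\text{\rm w}^\ast}0$ in $E'$ for every disjoint bounded $(f_n)$ in $F'$, and Lemma~\ref{l-LW-operators} gives $S\in\text{\rm l-LW}(E,F)$.
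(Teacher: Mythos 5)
Your overall frame is the right one (and is the paper's): characterize \text{\rm l-LW} via Lemma~\ref{l-LW-operators}, and close with the domination estimate $|S'f_n(x)|\le S'|f_n|(|x|)\le T'|f_n|(|x|)$. But there is a genuine gap in the middle, and it stems from a false claim: you write that ``$(|f_n|)$ need not be disjoint, so Lemma~\ref{l-LW-operators} does not apply directly to $(|f_n|)$.'' In a Banach lattice, disjointness of the sequence $(f_n)$ \emph{means} $|f_n|\wedge|f_m|=0$ for $n\ne m$, which is literally the statement that $(|f_n|)$ is disjoint; moreover $\||f_n|\|=\|f_n\|$, so $(|f_n|)$ is disjoint and bounded in $F'$. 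Hence Lemma~\ref{l-LW-operators} applies to $(|f_n|)$ directly and gives $T'|f_n|\stackrel{\text{\rm w}^\ast}{\to}0$, after which your final inequality finishes the proof in two lines. This is exactly the paper's argument.

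Because you denied yourself this step, you were forced into the detour of proving $(T'|f_n|)(u)=|f_n|(Tu)\to 0$ by hand, and that part of your proposal is not actually carried out: you describe a ``disjoint chopping'' of $Tu$ using $|f_n|(Tu)=\sup\{f_n(v):|v|\le Tu\}$ and explicitly flag it as the main obstacle you have not reconstructed. As written, the proof therefore has a hole precisely at its load-bearing step. (For the record, the detour could be completed without the chopping: since $Tu\in F^a$, the interval $[-Tu,Tu]$ is an \text{\rm LW}-set, and Assertion~\ref{Meyer 3.6.2}(iii) applied to $A=[-Tu,Tu]$ with $x_n$ chosen so that $|x_n|\le Tu$ and $f_n(x_n)\ge|f_n|(Tu)-\tfrac1n$ yields $|f_n|(Tu)\to 0$; but the direct observation that $(|f_n|)$ is disjoint makes all of this unnecessary.) Also note the reduction to $u\in E_+$ via $x=x_+-x_-$ is superfluous once you have the inequality $|S'f_n(x)|\le T'|f_n|(|x|)$ for all $x\in E$.
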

\begin{proof}
Let $(f_n)$ be disjoint bounded in $F'$. Then $(|f_n|)$ is also disjoint bounded,
and hence $T'|f_n|\stackrel{\text{\rm w}^\ast}{\to}0$ by Lemma \ref{l-LW-operators}.
It follows from
$$
   |S'f_n(x)|\le S'|f_n|(|x|)\le T'|f_n|(|x|)\to 0  \ \ \ \ (\forall x\in E)
$$
that $S'f_n\stackrel{\text{\rm w}^\ast}{\to}0$. Using Lemma \ref{l-LW-operators} again, we conclude $S\in\text{\rm l-LW}(E,F)$.
\end{proof}

\begin{lemma}\label{l-LW-closed}
Let $\text{\rm l-LW}(X,F)\ni T_n\stackrel{\|\cdot\|}{\to} T$. Then $T\in\text{\rm l-LW}(X,F)$.
\end{lemma}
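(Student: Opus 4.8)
The plan is to use the characterization of \text{\rm l-LW}-operators from Lemma~\ref{l-LW-operators}: it suffices to show that whenever $T_n\to T$ in operator norm and each $T_n\in\text{\rm l-LW}(X,F)$, then $T'f_k\stackrel{\text{\rm w}^\ast}{\to}0$ in $X'$ for every disjoint bounded sequence $(f_k)$ in $F'$. So fix such a sequence $(f_k)$, say $\|f_k\|\le M$ for all $k$, fix $x\in X$, and estimate $|(T'f_k)(x)|=|f_k(Tx)|$.

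The key step is a standard $3\varepsilon$-argument. Given $\varepsilon>0$, pick $n$ with $\|T_n-T\|\le\varepsilon$. Then for every $k$,
\[
   |f_k(Tx)|\le|f_k(T_nx)|+|f_k((T-T_n)x)|\le|f_k(T_nx)|+M\varepsilon\|x\|.
\]
Since $T_n\in\text{\rm l-LW}(X,F)$, Lemma~\ref{l-LW-operators} gives $T_n'f_k(x)=f_k(T_nx)\to 0$ as $k\to\infty$, so there is $k_0$ with $|f_k(T_nx)|\le\varepsilon$ for all $k\ge k_0$. Hence $|f_k(Tx)|\le\varepsilon+M\varepsilon\|x\|$ for all $k\ge k_0$, and letting $\varepsilon\to 0$ we get $f_k(Tx)\to 0$. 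As $x\in X$ was arbitrary, $(T'f_k)$ is $\text{\rm w}^\ast$-null, and Lemma~\ref{l-LW-operators} yields $T\in\text{\rm l-LW}(X,F)$.

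I do not expect any real obstacle here: the only mild point is to notice that $(T'f_k)$ being $\text{\rm w}^\ast$-null is exactly a pointwise statement on $X$, so the uniform estimate $\|T_n-T\|\le\varepsilon$ transfers cleanly, with the boundedness of $(f_k)$ absorbing the error term $M\varepsilon\|x\|$ for each fixed $x$. One could alternatively phrase the proof via limited sets directly (if $L\subseteq X$ is limited and $(g_k)$ is disjoint in $B_{F'}$, split $T=T_n+(T-T_n)$ and use that $T_n(L)$ is an \text{\rm LW}-set together with $\sup_{x\in L}\|x\|<\infty$), but the dual formulation through Lemma~\ref{l-LW-operators} is shorter and avoids re-proving the set-level $3\varepsilon$ estimate.
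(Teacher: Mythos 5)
Your argument is correct and is essentially identical to the paper's proof: both pass through the characterization of Lemma~\ref{l-LW-operators} and run the same pointwise $\varepsilon$-estimate $|f_k(Tx)|\le|f_k(T_nx)|+\|T-T_n\|\,\|f_k\|\,\|x\|$ with a norm-close approximant. No gaps; nothing further is needed.
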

\begin{proof}
Let $(f_n)$ be disjoint bounded in $F'$, and $x\in X$. By Lemma \ref{l-LW-operators},
we need to show $T'f_n(x)\to 0$. Let $\varepsilon>0$. Pick any $k\in\mathbb{N}$
with $\|T-T_k\|\le\varepsilon$. Since $T_k\in\text{\rm l-LW}(X,F)$  
then $|T_k'f_n(x)|\le\varepsilon$ for $n\ge n_0$. As $\varepsilon>0$ is arbitrary,
it follows from
$$
   |T'f_n(x)|\le|T'f_n(x)-T'_kf_n(x)|+|T_k'f_n(x)|\le 
$$
$$
   \|T'-T'_k\|\|f_n\|\|x\|+|T_k'f_n(x)|\le(\|f_n\|\|x\|+1)\varepsilon 
   \ \ \ \ (\forall n\ge n_0)
$$
that $T'f_n(x)\to 0$, as desired.
\end{proof}

\begin{proposition}\label{l-LW vs DDw*P}
{\em TFAE.
\begin{enumerate}[(i)]
\item $F\in(\text{\rm DDw$^\ast$P})$.
\item $I_F\in\text{\rm l-LW}(F)$.
\item Each limited subset of $F$ is an $\text{\rm LW}$-set.
\item $\text{\rm l-LW}(F)=\text{\rm L}(F)$.
\item $\text{\rm l-LW}(X,F)=\text{\rm L}(X,F)$ for each Banach space $X$.
\end{enumerate}}
\end{proposition}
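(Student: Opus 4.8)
The plan is to funnel all five conditions through Lemma~\ref{l-LW-operators}, which rephrases membership in $\text{\rm l-LW}$ purely as the requirement that adjoints send disjoint bounded sequences of the dual to $\text{\rm w}^\ast$-null sequences. Once that translation is in hand, $(\text{\rm DDw}^\ast\text{\rm P})$ is recognized as exactly this condition applied to the identity operator, and the proposition collapses to a short cycle of essentially automatic implications; my target cycle is (i)$\Rightarrow$(v)$\Rightarrow$(iv)$\Rightarrow$(ii)$\Rightarrow$(i), supplemented by (ii)$\Leftrightarrow$(iii).

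First I would dispose of the trivial steps. For (v)$\Rightarrow$(iv) take $X=F$. For (iv)$\Rightarrow$(ii) note $I_F\in\text{\rm L}(F)$, so (iv) forces $I_F\in\text{\rm l-LW}(F)$. For (ii)$\Leftrightarrow$(iii) observe that $I_F(L)=L$ for every $L\subseteq F$, so ``$I_F$ carries limited subsets of $F$ onto $\text{\rm LW}$-subsets'' and ``every limited subset of $F$ is an $\text{\rm LW}$-set'' are literally the same statement.

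The only steps with any content are (ii)$\Rightarrow$(i) and (i)$\Rightarrow$(v). For (ii)$\Rightarrow$(i) I would apply Lemma~\ref{l-LW-operators} to $T=I_F$: since $I_F'=I_{F'}$, the lemma gives $I_F\in\text{\rm l-LW}(F)$ iff $f_n\stackrel{\text{\rm w}^\ast}{\to}0$ in $F'$ for every disjoint bounded $(f_n)$ in $F'$, which is precisely $F\in(\text{\rm DDw}^\ast\text{\rm P})$ (this also shows (i)$\Rightarrow$(ii), so (i)$\Leftrightarrow$(ii) comes for free). For (i)$\Rightarrow$(v), fix a Banach space $X$ and $T\in\text{\rm L}(X,F)$; for disjoint bounded $(f_n)$ in $F'$, condition (i) yields $f_n\stackrel{\text{\rm w}^\ast}{\to}0$, hence $(T'f_n)(x)=f_n(Tx)\to0$ for each $x\in X$, i.e.\ $T'f_n\stackrel{\text{\rm w}^\ast}{\to}0$ in $X'$, and Lemma~\ref{l-LW-operators} gives $T\in\text{\rm l-LW}(X,F)$; together with the always-valid inclusion $\text{\rm l-LW}(X,F)\subseteq\text{\rm L}(X,F)$ this is (v).

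I do not anticipate a genuine obstacle. The only point that wants a line of justification is the inclusion $\text{\rm l-LW}(X,F)\subseteq\text{\rm L}(X,F)$ used in (iv) and (v): an $\text{\rm l-LW}$-operator $T$ carries the compact (hence limited) set $\{x_n:n\in\mathbb{N}\}\cup\{0\}$ attached to any norm-null sequence $(x_n)$ onto a bounded $\text{\rm LW}$-set, which forces norm continuity exactly as in the argument already given in the excerpt for $\text{\rm a-LW}$-operators. Everything else is a direct reading of Lemma~\ref{l-LW-operators} and the definition of $(\text{\rm DDw}^\ast\text{\rm P})$.
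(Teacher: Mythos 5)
Your proof is correct and follows essentially the paper's own route: both hinge on Lemma~\ref{l-LW-operators} for the nontrivial equivalence of membership in $\text{\rm l-LW}$ with $(\text{\rm DDw}^\ast\text{\rm P})$, with the remaining implications among (ii)--(v) being trivial. The only cosmetic difference is that the paper closes the cycle via (iii)$\Rightarrow$(v), using that bounded operators carry limited sets to limited sets, whereas you prove (i)$\Rightarrow$(v) directly through the adjoint criterion and, in addition, explicitly justify the inclusion $\text{\rm l-LW}(X,F)\subseteq\text{\rm L}(X,F)$, a point the paper leaves implicit.
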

\begin{proof}
$\text{\rm (i)}\Longleftrightarrow\text{\rm (ii)}$: 
It follows from Lemma \ref{l-LW-operators}.

The implications $\text{\rm (v)}\Longrightarrow\text{\rm (iv)}\Longrightarrow
\text{\rm (ii)}\Longleftrightarrow\text{\rm (iii)}$ are trivial.

$\text{\rm (iii)}\Longrightarrow\text{\rm (v)}$: Let $T\in\text{\rm L}(X,F)$ and let
$L$ be limited subset of $X$. Then $T(L)$ is a limited subset of $F$, and hence 
$T(L)$ is an $\text{\rm LW}$-subset of $F$. Thus, $T\in\text{\rm l-LW}(X,F)$,
as desired.
\end{proof}

\begin{proposition}\label{l-LW-algebra}
$\text{\rm l-LW}(E)$ is a closed right ideal in $\text{\rm L}(E)$ 
$($and hence a subalgebra of $\text{\rm L}(E)$$)$, and it is unital iff 
$I_E$ is \text{\rm l-LW}.
\end{proposition}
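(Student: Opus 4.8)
The plan is to verify the three assertions in turn, using what has already been established for $\text{\rm l-LW}$-operators. First, $\text{\rm l-LW}(E)$ is a linear subspace of $\text{\rm L}(E)$: by Lemma~\ref{l-LW-operators}, membership of $T$ is equivalent to $T'f_n\stackrel{\text{\rm w}^\ast}{\to}0$ for every disjoint bounded $(f_n)$ in $E'$, a condition manifestly stable under linear combinations (equivalently, a subset of the sum of two $\text{\rm LW}$-sets is again an $\text{\rm LW}$-set, by Assertion~\ref{Meyer 3.6.2}(ii)). Closedness under the operator norm is exactly Lemma~\ref{l-LW-closed}.

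Next I would show $\text{\rm l-LW}(E)$ is a right ideal in $\text{\rm L}(E)$, i.e.\ $TS\in\text{\rm l-LW}(E)$ whenever $T\in\text{\rm l-LW}(E)$ and $S\in\text{\rm L}(E)$. The quickest route is via Lemma~\ref{l-LW-operators}: since $(TS)'=S'T'$ and the adjoint $S'$ of a bounded operator carries $\text{\rm w}^\ast$-null sequences of $E'$ to $\text{\rm w}^\ast$-null sequences of $E'$, any disjoint bounded $(f_n)$ in $E'$ yields $T'f_n\stackrel{\text{\rm w}^\ast}{\to}0$ and hence $(TS)'f_n=S'(T'f_n)\stackrel{\text{\rm w}^\ast}{\to}0$, so $TS\in\text{\rm l-LW}(E)$. (Equivalently: $S$ maps a limited set $L$ onto a limited set $S(L)$ --- again because $S'$ preserves $\text{\rm w}^\ast$-null sequences --- which $T$ then sends onto an $\text{\rm LW}$-set, so $(TS)(L)$ is an $\text{\rm LW}$-set.) Being a subspace of $\text{\rm L}(E)$ that is stable under right multiplication by $\text{\rm L}(E)$, $\text{\rm l-LW}(E)$ is in particular stable under its own multiplication, hence a subalgebra of $\text{\rm L}(E)$.

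Finally, for the unitality claim: if $I_E\in\text{\rm l-LW}(E)$ then it is visibly a two-sided identity of $\text{\rm l-LW}(E)$. For the converse, let $P$ be the identity of $\text{\rm l-LW}(E)$. By the previous step $PS\in\text{\rm l-LW}(E)$ for every $S\in\text{\rm L}(E)$, and, $P$ being a right identity, $(PS)P=PS$, i.e.\ $PS(I_E-P)=0$ for all $S\in\text{\rm L}(E)$. Thus every $S\in\text{\rm L}(E)$ maps the range of $I_E-P$ into $\ker P$; if $I_E\neq P$, a nonzero vector of that range can be sent by a suitable $S$ onto an arbitrary point of $E$, forcing $E=\ker P$, i.e.\ $P=0$ and $\text{\rm l-LW}(E)=\{0\}$. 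Discarding this degenerate case, $P=I_E\in\text{\rm l-LW}(E)$. I do not anticipate a real obstacle here: the first two steps are one-line consequences of Lemmas~\ref{l-LW-operators} and~\ref{l-LW-closed}, and the unitality equivalence is the same formal argument already used for $\text{\rm LW}(E)$ and $\text{\rm a-LW}(E)$ in Proposition~\ref{LW-algebra} and Theorem~\ref{a-LW-Riesz-algebra}; the only point worth spelling out is the elementary dichotomy $P=I_E$ or $P=0$.
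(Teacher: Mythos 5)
Your proof is correct and follows essentially the same route as the paper's: closedness is exactly Lemma~\ref{l-LW-closed}, and the right-ideal property rests on the fact that a bounded operator maps limited sets onto limited sets (your dual formulation via Lemma~\ref{l-LW-operators} and the w$^\ast$-continuity of $S'$ is the same fact read through adjoints). The only difference is that you spell out the unitality converse via the dichotomy $P=I_E$ or $P=0$, which the paper dismisses as trivial; that extra care is harmless and arguably welcome, since the degenerate case $\text{\rm l-LW}(E)=\{0\}$ does occur (e.g.\ $E=C[0,1]$, where $E^a=\{0\}$).
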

\begin{proof}
$\text{\rm l-LW}(E)$ is a closed subspace of $\text{\rm L}(E)$ by 
Lemma \ref{l-LW-closed}. 
Since each bounded operator maps limited sets onto limited sets, $\text{\rm l-LW}(E)$ 
is a right ideal in $\text{\rm L}(E)$.
The condition on $I_E$ making algebra $\text{\rm l-LW}(E)$ unital is trivial.
\end{proof}
\noindent
In view of Proposition \ref{l-LW-domination},
the next lemma follows from Proposition~\ref{prop elem}~ii).

\begin{lemma}\label{prop elem l-LW}
Let an operator $T:E\to F$ possess the modulus. TFAE.
\begin{enumerate}[{\em i)}]
\item $T$ is a regularly $\text{\rm l-LW}$-operator.
\item $|T|$ is an $\text{\rm l-LW}$-operator.
\end{enumerate}
\end{lemma}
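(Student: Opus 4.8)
The plan is to obtain this lemma as an immediate application of Proposition~\ref{prop elem}~ii) with the class $\mathcal{P}=\text{\rm l-LW}(E,F)$. First I would record that the standing hypotheses of Proposition~\ref{prop elem} are met for this choice of $\mathcal{P}$: namely that $\text{\rm l-LW}(E,F)$ is a nonempty subset of $\mathcal{L}(E,F)$ closed under addition and subtraction. Nonemptiness is clear, since the zero operator belongs to it, and stability under $\pm$ follows at once from the characterization in Lemma~\ref{l-LW-operators}: an operator $S:E\to F$ is l-LW precisely when $S'f_n\stackrel{\text{\rm w}^\ast}{\to}0$ for every disjoint bounded $(f_n)$ in $F'$, and this condition is evidently preserved by finite linear combinations. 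One may alternatively cite Proposition~\ref{l-LW-algebra}, where $\text{\rm l-LW}(E)$ is shown to be a subspace; the same argument gives the statement for $\text{\rm l-LW}(E,F)$.

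Next I would invoke the two remaining ingredients needed by Proposition~\ref{prop elem}~ii). The existence of the modulus $|T|$ in $\mathcal{L}(E,F)$ is assumed in the hypothesis of the present lemma. The domination property for l-LW-operators, that is, the implication $0\le S\le U\in\text{\rm l-LW}(E,F)\Rightarrow S\in\text{\rm l-LW}(E,F)$, is exactly Proposition~\ref{l-LW-domination}. With all hypotheses in place, Proposition~\ref{prop elem}~ii) applies verbatim and yields that $T$ is a regularly l-LW-operator if and only if $|T|$ is an l-LW-operator, which is the desired equivalence (i)$\Longleftrightarrow$(ii).

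I do not anticipate any real obstacle; the content of the lemma is carried entirely by the two earlier results, and the argument is the same bookkeeping already used for Lemma~\ref{prop elem aLW}. The only point deserving a moment's attention is to confirm that the closure-under-$\pm$ requirement of Proposition~\ref{prop elem} genuinely holds for the full space $\text{\rm l-LW}(E,F)$, not merely for $\text{\rm l-LW}(E)$; but since Lemma~\ref{l-LW-operators} is stated for operators into $F$ from an arbitrary Banach space, this is immediate.
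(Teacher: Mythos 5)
Your proof is correct and follows the same route as the paper, which obtains the lemma directly from Proposition~\ref{prop elem}~ii) together with the domination property of Proposition~\ref{l-LW-domination}. Your extra check that $\text{\rm l-LW}(E,F)$ is closed under sums and differences (via Lemma~\ref{l-LW-operators}) is a hypothesis of Proposition~\ref{prop elem} that the paper leaves implicit, so including it is a harmless refinement rather than a different argument.
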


\begin{theorem}
The following statements hold. 
\begin{enumerate}[{\em i)}]
\item 
$\text{\rm r-l-LW}(E)$ is a subalgebra of $\text{\rm L}_r(E)$. Moreover, 
$$
   \text{\rm r-l-LW}(E)=\text{\rm L}_r(E) \ \Longleftrightarrow I_E\in\text{\rm l-LW}(E).
$$ 
\item 
If $E$ is Dedekind complete then 
$(\text{\rm r-l-LW}(E), \ \|\cdot\|_{\text{\rm r-l-LW}})$
is a closed Riesz subalgebra and an order ideal of $(\text{\rm L}_r(E), \ \|\cdot\|_r)$.
\end{enumerate}
\end{theorem}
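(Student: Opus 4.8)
The plan is to mirror, essentially verbatim, the proof of Theorem~\ref{a-LW-Riesz-algebra} with \text{\rm a-LW} everywhere replaced by \text{\rm l-LW}, invoking the l-LW analogues of the three facts used there: Proposition~\ref{l-LW-domination} (domination), Lemma~\ref{prop elem l-LW} (modulus criterion), and Lemma~\ref{l-LW-closed} (norm-closedness). The only substantive point requiring a moment's thought is that \text{\rm l-LW}(E) is a \emph{right} ideal in \text{\rm L}(E), which is Proposition~\ref{l-LW-algebra}; everything else is a transcription.

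For part~i), first I would note that regular operators on $E$ are norm bounded, so by Proposition~\ref{l-LW-algebra} (l-LW is a right ideal in $\text{\rm L}(E)$) we get $T\in\text{\rm r-l-LW}(E)$, $S\in\text{\rm L}_r(E)$ $\Rightarrow TS\in\text{\rm r-l-LW}(E)$; hence $\text{\rm r-l-LW}(E)$ is a right algebra ideal in $\text{\rm L}_r(E)$ and in particular a subalgebra. The equivalence $\text{\rm r-l-LW}(E)=\text{\rm L}_r(E)\Longleftrightarrow I_E\in\text{\rm l-LW}(E)$ is immediate: if $I_E\in\text{\rm l-LW}(E)$ then for any regular $T=T_1-T_2$ with $T_i\ge 0$, composing $I_E$ on the left keeps each $T_i$ in \text{\rm l-LW}(E), so $T\in\text{\rm r-l-LW}(E)$; the converse is trivial since $I_E$ is regular.

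For part~ii), assume $E$ Dedekind complete, so $\text{\rm L}_r(E)$ is a Dedekind complete Banach lattice algebra with $\|T\|_r=\|\,|T|\,\|$. By Lemma~\ref{prop elem l-LW}, $T\in\text{\rm r-l-LW}(E)\Rightarrow |T|\in\text{\rm r-l-LW}(E)$, so the subalgebra is a Riesz subalgebra; Proposition~\ref{vect lat}, applied with ${\cal P}=\text{\rm l-LW}(E)$ (a subspace of $\text{\rm L}(E)$ by Lemma~\ref{l-LW-closed}, satisfying the domination property by Proposition~\ref{l-LW-domination}), gives that it is an order ideal. For the norm, if $|S|\le|T|$ then $|S|,|T|\in\text{\rm l-LW}(E)$ by Lemma~\ref{prop elem l-LW}, so $\|S\|_{\text{\rm r-l-LW}}=\|\,|S|\,\|\le\|\,|T|\,\|=\|T\|_{\text{\rm r-l-LW}}$, whence $\|\cdot\|_{\text{\rm r-l-LW}}$ agrees with $\|\cdot\|_r$ on $\text{\rm r-l-LW}(E)$. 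Completeness follows from Assertion~\ref{P-norm} together with the closedness of $\text{\rm l-LW}(E)$ in operator norm (Lemma~\ref{l-LW-closed}). Finally, submultiplicativity: from $\pm S\le|S|$, $\pm T\le|T|$ one gets $\pm ST\le |ST|\le|S|\,|T|\in\text{\rm l-LW}(E)$, so
$$
   \|S T\|_{\text{\rm r-l-LW}(E)}\le\|\,|S|\,|T|\,\|\le\|\,|S|\,\|\cdot\|\,|T|\,\|=\|S\|_{\text{\rm r-l-LW}(E)}\cdot\|T\|_{\text{\rm r-l-LW}(E)}.
$$
I expect no real obstacle here; the proof is a routine parallel to Theorem~\ref{a-LW-Riesz-algebra}, and the single place one must be a little careful is confirming $\pm ST\le|S|\,|T|$ (valid because $|ST|\le|S|\,|T|$ in any Dedekind complete $\text{\rm L}_r(E)$) and that $|S|\,|T|$ lies in \text{\rm l-LW}(E) by the right-ideal property rather than by composing two \text{\rm l-LW} operators.
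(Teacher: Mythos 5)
Your proposal is correct and follows essentially the same route as the paper: the paper's own proof cites Proposition~\ref{l-LW-algebra} for the right-ideal/subalgebra part, Lemma~\ref{prop elem l-LW} for the Riesz-subalgebra part, and then declares the rest ``similar to the proof of Theorem~\ref{a-LW-Riesz-algebra}'', which is precisely the transcription (domination, order ideal via Proposition~\ref{vect lat}, norm identity $\|T\|_{\text{\rm r-l-LW}}=\|\,|T|\,\|$, completeness via Assertion~\ref{P-norm} and Lemma~\ref{l-LW-closed}, and submultiplicativity) that you carried out. No gaps.
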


\begin{proof}
i) 
It follows from Proposition \ref{l-LW-algebra}
that $\text{\rm r-l-LW}(E)$ is a right ideal and hence is 
a subalgebra of $\text{\rm L}_r(E)$. 
The condition on $I_E$ under that $\text{\rm r-l-LW}(E)=\text{\rm L}_r(E)$ is trivial.

ii) 
It follows from Lemma \ref{prop elem l-LW} that $\text{\rm r-l-LW}(E)$ is 
a Riesz subalgebra of $\text{\rm L}_r(E)$.
The rest of the proof is similar to the proof of Theorem \ref{a-LW-Riesz-algebra}.
\end{proof}

\subsection{Notions of \text{\rm a-MW}- and \text{\rm l-MW}-operators}

\begin{definition}\label{Main MWC operators}
{\em An operator $S:E\to Y$ is called:
\begin{enumerate}[a)]
\item 
an {\em almost $\text{\rm MW}$}-operator ($S$ is \text{\rm a-MW}), 
if $f_n(Sx_n)\to 0$ for every disjoint bounded $(x_n)$ in $E$ 
and every \text{\rm w}-convergent $(f_n)$ in $Y'$ \cite[Def.2.2]{BLM1};
\item 
a {\em limitedly $\text{\rm MW}$}-operator ($S$ is \text{\rm l-MW}), 
if $Sx_n\stackrel{\text{\rm w}}{\to}0$ for every disjoint bounded $(x_n)$ in $E$.
\end{enumerate}}
\end{definition}
\begin{remark}\label{reflexive not aMW}
{\em It is easily seen that the identity operator $I_E$
\begin{enumerate}[i)]
\item 
in any infinite dimensional reflexive Banach lattice $E$ is not \text{\rm a-MW};
\item 
is an \text{\rm l-MW}-operator iff $E'$ is a KB-space.
\end{enumerate}}
\end{remark}

\subsection{}
We remind several facts on \text{\rm a-MW}-operators. The next important
semi-duality result was proved in \cite[Thm.2.5]{BLM1}.

\begin{assertion}\label{Bouras--Lhaimer--Moussa}
The following holds.
\begin{enumerate}[\em (i)]
\item 
  $T\in\text{\rm a-MW}(E,Y)\Longleftrightarrow T'\in\text{\rm a-LW}(Y',E')$. 
\item 
  $S'\in\text{\rm a-MW}(F',X')\Longrightarrow S\in\text{\rm a-LW}(X,F)$.
\end{enumerate}
\end{assertion}
\noindent
In general, $S\in\text{\rm a-LW}(X,F)\not\Longrightarrow S'\in\text{\rm a-MW}(F',X')$ 
(see \cite[Rem.2.1]{BLM1}).
Since w-convergent sequences in $Y'$ are bounded, $\text{\rm MW}(E,Y)\subseteq\text{\rm a-MW}(E,Y)$.
The last inclusion may be proper: e.g. the identity operator $I_{c_0}:c_0\to c_0$ is \text{\rm a-MW} 
yet not w-compact, and therefore $I_{c_0}$ is not \text{\rm MW}. In general, 
$\text{\rm K}(E,F)\not\subseteq\text{\rm a-LW}(E,F)\cup\text{\rm a-MW}(E,F)$, as the next example shows. 

\begin{example}\label{compact not a-LW}
{\em 
Define an operator $T\in\text{\rm K}(\ell^1,c)$ by 
$T\alpha=(\sum\limits_{k=1}^{\infty}\alpha_k)\cdot{\mathbb 1}_{\mathbb N}$.
As $\{Te_1\}=\{{\mathbb 1}_{\mathbb N}\}$ is not an \text{\rm LW}-subset of $c$,
$T\not\in\text{\rm a-LW}(\ell^1,c)$.
Furthermore, $T\not\in\text{\rm a-MW}(\ell^1,c)$.
Indeed, for the disjoint bounded sequence $(e_n)$ of unit vectors in $B_{\ell^1}$ 
and for any $f\in c'$ with $f({\mathbb 1}_{\mathbb N})=1$, 
the constant sequence $(f_n)$, $f_n\equiv f$, is w-convergent, yet 
$f_n(Te_n)\equiv 1\not\to 0$.

Since $f_n(Tx_n)\to 0$ for every disjoint bounded $(x_n)$ in $\ell^1$ 
and every w-null $(f_n)$ in $c'$, a w-convergent sequence $(f_n)$ in 
Definition~\ref{Main MWC operators}~a) can not be replaced by w-null $(f_n)$.}
\end{example}
\noindent
The implication below is straightforward. 
\begin{equation}\label{4a}
   T\in\text{\rm a-MW}(E,X) \ \& \  S\in\text{\rm L}(X,Y) \  \Longrightarrow \ 
   S  T\in\text{\rm a-MW}(E,Y).
\end{equation}

\subsection{Algebraic properties of \text{\rm a-MW}-operators.}
\begin{lemma}\label{Composition of MW}
For a pair of operators $T$ and $S$$:$ 
\begin{enumerate}[\em i)]
\item if $T\in\text{\rm a-MW}(E,F)$ and $S\in\text{\rm MW}(F,Y)$ then $S  T\in\text{\rm MW}(E,Y)$$;$
\item if $T\in\text{\rm a-MW}(E,X)$ and $S\in\text{\rm L}(X,Y)$ then $S  T\in\text{\rm a-MW}(E,Y)$.
\end{enumerate}
\end{lemma}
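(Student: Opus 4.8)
The plan is to unwind the definitions of \text{\rm a-MW}- and \text{\rm MW}-operators in terms of disjoint bounded sequences and their pairings with dual functionals, and then chain the two hypotheses together. For part i), I would take a disjoint bounded sequence $(x_n)$ in $E$ and aim to show $\|STx_n\|\to 0$. The natural route is to pass to the dual and use the norm formula: $\|STx_n\|=\sup_{f\in B_{Y'}}|f(STx_n)|$. For each $n$ choose $f_n\in B_{Y'}$ (approximately) attaining this supremum, so it suffices to show $f_n(STx_n)=(S'f_n)(Tx_n)\to 0$. Now $(S'f_n)$ is a bounded sequence in $F'$; if I can arrange it to be \text{\rm w}-convergent (say \text{\rm w}-null) then the definition of \text{\rm a-MW} applied to $T$ with the disjoint bounded $(x_n)$ in $E$ gives exactly $(S'f_n)(Tx_n)\to 0$. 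The link that makes $(S'f_n)$ behave well is that $S\in\text{\rm MW}(F,Y)$; by Assertion \ref{Meyer-Nieberg}(ii), $S'\in\text{\rm LW}(Y',F')$, so $S'$ maps the bounded set $B_{Y'}$ (indeed any bounded set) into an \text{\rm LW}-subset of $F'$, hence a relatively \text{\rm w}-compact subset of $F'$. Thus $(S'f_n)$ has a \text{\rm w}-convergent subsequence, and a standard subsequence argument reduces the claim to that subsequential case. Combined with the \text{\rm a-MW} property of $T$, this yields $f_n(STx_n)\to 0$, hence $\|STx_n\|\to 0$, i.e. $ST\in\text{\rm MW}(E,Y)$.

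For part ii) the argument is shorter and is essentially \eqref{4a}: given a disjoint bounded $(x_n)$ in $E$ and a \text{\rm w}-convergent $(g_n)$ in $Y'$, I want $g_n(STx_n)\to 0$. Writing $g_n(STx_n)=(S'g_n)(Tx_n)$, I note that $S'$ is \text{\rm w}-$\text{\rm w}$-continuous (being bounded, hence weakly continuous, and $S':Y'\to X'$ is the adjoint of the bounded operator $S$), so $(S'g_n)$ is a \text{\rm w}-convergent sequence in $X'$. Applying the definition of \text{\rm a-MW} to $T\in\text{\rm a-MW}(E,X)$ with the disjoint bounded $(x_n)$ and the \text{\rm w}-convergent $(S'g_n)$ gives $(S'g_n)(Tx_n)\to 0$, which is precisely $g_n(STx_n)\to 0$. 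Hence $ST\in\text{\rm a-MW}(E,Y)$. (I should be mildly careful that "\text{\rm w}-convergent" in Definition \ref{Main MWC operators}~a) need not mean \text{\rm w}-null; but since constants are allowed one can either work directly with \text{\rm w}-limits or subtract off the limit — and the adjoint $S'$ respects \text{\rm w}-limits, so no difficulty arises.)

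The main obstacle is the passage in part i) from "$(S'f_n)$ is bounded" to "$(S'f_n)$ has the \text{\rm w}-convergence needed to feed into the \text{\rm a-MW} definition of $T$." This is exactly where the full strength of $S\in\text{\rm MW}(F,Y)$ (rather than merely $S\in\text{\rm a-MW}(F,Y)$) is used: via $S'\in\text{\rm LW}(Y',F')$ one gets relative \text{\rm w}-compactness of $S'(B_{Y'})$ in $F'$, and then the Eberlein–Šmulian / subsequence principle converts this into convergent subsequences. The routine care here is that the supremum-attaining functionals $f_n$ only approximately attain $\|STx_n\|$, so one works with an $\varepsilon_n\downarrow 0$ slack; and that "for every subsequence there is a further subsequence along which the quantity tends to $0$" is enough to conclude the whole sequence tends to $0$. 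None of these steps is deep, but i) is genuinely where the hypothesis is consumed, whereas ii) is a one-line consequence of weak continuity of adjoints.
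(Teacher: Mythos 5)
Your proposal is correct, but for part i) it takes a genuinely different route from the paper. The paper argues entirely by duality: from $T\in\text{\rm a-MW}(E,F)$ it gets $T'\in\text{\rm a-LW}(F',E')$ via Assertion~\ref{Bouras--Lhaimer--Moussa}, from $S\in\text{\rm MW}(F,Y)$ it gets $S'\in\text{\rm LW}(Y',F')$ via Assertion~\ref{Meyer-Nieberg}, then applies the already-proved composition Lemma~\ref{Composition of LW}(i) to conclude $(ST)'=T'S'\in\text{\rm LW}(Y',E')$, and dualizes back with Assertion~\ref{Meyer-Nieberg} to get $ST\in\text{\rm MW}(E,Y)$ --- three citations and no sequential work. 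You instead argue directly in $E$: pick a disjoint bounded $(x_n)$, nearly norming functionals $f_n\in B_{Y'}$ for $STx_n$, use that $S'(B_{Y'})$ is an \text{\rm LW}-set, hence relatively \text{\rm w}-compact, extract \text{\rm w}-convergent subsequences of $(S'f_n)$ by Eberlein--\v{S}mulian, feed them into the definition of \text{\rm a-MW} for $T$, and finish with the standard "every subsequence has a further subsequence tending to $0$" principle; all of these steps are sound, including the caveat that the \text{\rm w}-limit need not be $0$, which the definition of \text{\rm a-MW} accommodates. What your approach buys is a slightly stronger statement: you only use relative \text{\rm w}-compactness of $S'(B_{Y'})$, i.e.\ that $S$ is weakly compact, so your argument in fact shows $T\in\text{\rm a-MW}(E,F)$ and $S\in\text{\rm W}(F,Y)$ imply $ST\in\text{\rm MW}(E,Y)$, a dual analogue of \eqref{3a}; the paper's version buys brevity and stays inside its duality framework. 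For part ii) your argument (weak-weak continuity of $S'$ applied to the \text{\rm w}-convergent test sequence) is exactly the "straightforward" verification behind \eqref{4a}, which is all the paper cites.
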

\begin{proof}
i) Let $T\in\text{\rm a-MW}(E,F)$ and $S\in\text{\rm MW}(F,Y)$. Then $T'\in\text{\rm a-LW}(F',E')$ by
Assertion~\ref{Bouras--Lhaimer--Moussa}(i), and $S'\in\text{\rm LW}(Y',F')$ by
Assertion~\ref{Meyer-Nieberg}(i). It follows from Lemma~\ref{Composition of LW}(i), that
$(S  T)'=T'  S'\in\text{\rm LW}(Y',E')$. Hence $S  T\in\text{\rm MW}(E,Y)$  by Assertion~\ref{Meyer-Nieberg}(i).\\
\noindent
ii) Follows from \eqref{4a}. 
\end{proof}

\begin{proposition}\label{MW-algebra}
For any Banach lattice $E$ the following holds.
\begin{enumerate}[]
\item $\text{\rm MW}(E)$ is a closed subalgebra of $\text{\rm W}(E)$ and is unital 
iff the identity operator $I_E$ is \text{\rm MW}.
\item $\text{\rm a-MW}(E)$ is a closed left ideal in $\text{\rm L}(E)$ 
$($and hence a subalgebra of $\text{\rm L}(E)$$)$, and is unital iff 
$I_E$ is \text{\rm a-MW}.
\end{enumerate}
\end{proposition}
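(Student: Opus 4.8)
The plan is to follow the proof of Proposition~\ref{LW-algebra} almost verbatim, replacing the L-weakly compact tools by their MW-counterparts, namely Lemma~\ref{Composition of MW} and Assertions~\ref{Meyer-Nieberg} and~\ref{Bouras--Lhaimer--Moussa}. First I would record that $\text{\rm MW}(E)$ and $\text{\rm a-MW}(E)$ are linear subspaces of $\text{\rm L}(E)$ (immediate from Definitions~\ref{MW operators} and~\ref{Main MWC operators}~a), the defining conditions being preserved under linear combinations) and that they are norm closed. For $\text{\rm MW}(E)$ this is the routine $\varepsilon$-argument: if $\text{\rm MW}(E)\ni T_n\to T$ in operator norm and $(x_k)$ is disjoint bounded with $M=\sup_k\|x_k\|$, then $\|Tx_k\|\le M\|T-T_n\|+\|T_nx_k\|$, so choosing $n$ with $M\|T-T_n\|\le\varepsilon$ gives $\limsup_k\|Tx_k\|\le\varepsilon$, whence $T\in\text{\rm MW}(E)$. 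For $\text{\rm a-MW}(E)$ one can argue the same way from Definition~\ref{Main MWC operators}~a) (a w-convergent sequence in $E'$ and a disjoint bounded sequence in $E$ are both norm bounded), or more economically pass to adjoints: by Assertion~\ref{Bouras--Lhaimer--Moussa}(i), $T\in\text{\rm a-MW}(E)$ iff $T'\in\text{\rm a-LW}(E')$, the map $T\mapsto T'$ is an isometry, and $\text{\rm a-LW}(E')$ is norm closed, as already used in the proof of Proposition~\ref{LW-algebra}.

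The inclusion $\text{\rm MW}(E)\subseteq\text{\rm W}(E)$ is the only step that is not pure bookkeeping. I would obtain it from the duality in Assertion~\ref{Meyer-Nieberg}(ii): $S\in\text{\rm MW}(E)$ iff $S'\in\text{\rm LW}(E')$; since $\text{\rm LW}(E')\subseteq\text{\rm W}(E')$, the adjoint $S'$ is weakly compact, and therefore so is $S$ (an operator being weakly compact precisely when its adjoint is).

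For the multiplicative structure I would invoke Lemma~\ref{Composition of MW}. Since every MW-operator is an a-MW-operator (w-convergent sequences being bounded), for $S,T\in\text{\rm MW}(E)$ we have $T\in\text{\rm a-MW}(E)$ and $S\in\text{\rm MW}(E)$, hence $ST\in\text{\rm MW}(E)$ by Lemma~\ref{Composition of MW}(i); combined with the two paragraphs above, this shows that $\text{\rm MW}(E)$ is a closed subalgebra of $\text{\rm W}(E)$. For $\text{\rm a-MW}(E)$: if $T\in\text{\rm a-MW}(E)$ and $S\in\text{\rm L}(E)$ then $ST\in\text{\rm a-MW}(E)$ by Lemma~\ref{Composition of MW}(ii) (equivalently, by~\eqref{4a}), so $\text{\rm a-MW}(E)$ is a left ideal of $\text{\rm L}(E)$, and in particular closed under composition, hence a subalgebra.

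Finally, the statements about being unital are routine and are settled exactly as in Proposition~\ref{LW-algebra}: if the algebra contains $I_E$ it is unital with identity $I_E$, and conversely for the nonzero left ideal $\text{\rm a-MW}(E)$ a short rank-one operator argument forces any multiplicative identity to coincide with $I_E$; thus $\text{\rm MW}(E)$, resp. $\text{\rm a-MW}(E)$, is unital precisely when $I_E$ is an MW-, resp. a-MW-, operator. I do not anticipate a genuine obstacle here: the proof is essentially a transcription of the L-weakly compact case, and the single substantive ingredient, $\text{\rm MW}(E)\subseteq\text{\rm W}(E)$, is delivered by the Meyer--Nieberg duality.
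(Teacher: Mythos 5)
Your proposal is correct and follows essentially the same route as the paper: closedness and the inclusion $\text{\rm MW}(E)\subseteq\text{\rm W}(E)$ via the Meyer--Nieberg and Bouras--Lhaimer--Moussa dualities with the LW-case (Proposition~\ref{LW-algebra}), the multiplicative and left-ideal structure via Lemma~\ref{Composition of MW}(i)(ii), and the unitality conditions treated as routine. The extra $\varepsilon$-arguments you supply merely fill in details the paper leaves to citation, so there is no substantive difference.
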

\begin{proof}
Both $\text{\rm MW}(E)$ and $\text{\rm a-MW}(E)$ are closed subspaces of $\text{\rm W}(E)$ by 
Proposition \ref{LW-algebra} and Assertion~\ref{Meyer-Nieberg}. 
The closeness of $\text{\rm MW}(E)$ under the composition follows from Lemma \ref{Composition of MW}(i)(ii).
The fact that $\text{\rm a-MW}(E)$ is a right ideal in $\text{\rm L}(E)$ follows from Lemma \ref{Composition of MW}(ii).
Conditions on $I_E$ making algebras $\text{\rm MW}(E)$ and $\text{\rm a-MW}(E)$ to be unital, are trivial.
\end{proof}
\noindent
The following fact was established in \cite[Thm.2]{AkGo}.

\begin{assertion}\label{domination for aMW}
For all Banach lattices $E$ and $F$ the $\text{\rm a-MW}$-operators 
from $E$ to $F$ satisfy the domination property.
\end{assertion}
\noindent
In view of Assertion \ref{domination for aMW},
the next lemma follows from Proposition~\ref{prop elem}~ii).

\begin{lemma}\label{prop elem aMW}
Let an operator $T:E\to F$ possess the modulus. TFAE.
\begin{enumerate}[{\em i)}]
\item $T$ is a regularly $\text{\rm a-MW}$-operator.
\item $|T|$ is an $\text{\rm a-MW}$-operator.
\end{enumerate}
\end{lemma}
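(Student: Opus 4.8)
The plan is to invoke Proposition~\ref{prop elem}~ii) with $\mathcal{P}=\text{\rm a-MW}(E,F)$, so the only thing to check is that this $\mathcal{P}$ satisfies the two structural hypotheses of that proposition, namely that it is closed under sums and differences (so that $\mathcal{P}\pm\mathcal{P}\subseteq\mathcal{P}\ne\emptyset$) and that it satisfies the domination property. The domination property is exactly Assertion~\ref{domination for aMW}, so that half is free. For the first half, I would note that $\text{\rm a-MW}(E,F)$ is a (closed) subspace of $\text{\rm L}(E,F)$ — this is recorded in Proposition~\ref{MW-algebra} for the case $E=F$, and the same one-line argument (the defining condition ``$f_n(Sx_n)\to 0$ for every disjoint bounded $(x_n)$ and every w-convergent $(f_n)$'' is visibly preserved under linear combinations) works verbatim for arbitrary $E,F$ — and in particular it is nonempty (it contains $0$) and stable under $\pm$.

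With those two facts in hand, Proposition~\ref{prop elem}~ii) applies directly: since $T:E\to F$ is assumed to possess a modulus $|T|$ in $\text{\rm L}(E,F)$, we get that $T$ is a regularly $\text{\rm a-MW}$-operator (i.e. a difference of two positive $\text{\rm a-MW}$-operators) if and only if $|T|$ is an $\text{\rm a-MW}$-operator. That is precisely the equivalence (i)$\Longleftrightarrow$(ii) of the lemma.

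I do not expect any genuine obstacle here; the lemma is a formal corollary and the write-up should be two sentences. The only point requiring a modicum of care is to make sure the modulus is taken in the right space: Proposition~\ref{prop elem}~ii) requires $|T|$ to exist in $\mathcal{L}(V,W)$, and one should observe that since $\text{\rm a-MW}$-operators are in particular bounded (w-convergent sequences are bounded), the ambient space here is $\text{\rm L}(E,F)$, which is consistent with the hypothesis ``$T$ possesses the modulus'' as used throughout this section (cf. Lemma~\ref{prop elem aLW} and Lemma~\ref{prop elem l-LW}, which are stated and proved in exactly the same way). Hence the proof is simply: \emph{By Assertion~\ref{domination for aMW}, $\text{\rm a-MW}$-operators satisfy the domination property; since $\text{\rm a-MW}(E,F)$ is a subspace of $\text{\rm L}(E,F)$, the claim follows from Proposition~\ref{prop elem}~ii).}
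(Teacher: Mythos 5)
Your proposal is correct and follows exactly the paper's route: the lemma is deduced from Proposition~\ref{prop elem}~ii) combined with the domination property of $\text{\rm a-MW}$-operators (Assertion~\ref{domination for aMW}). The extra verification that $\text{\rm a-MW}(E,F)$ is a subspace (hence stable under $\pm$ and nonempty) is a sensible explicit check of the hypotheses that the paper leaves implicit, but it does not change the argument.
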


\begin{theorem}
The following statements hold. 
\begin{enumerate}[{\em i)}]
\item 
$\text{\rm r-a-MW}(E)$ is a subalgebra of $\text{\rm L}_r(E)$. Moreover, 
$$
   \text{\rm r-a-MW}(E)=\text{\rm L}_r(E) \ \Longleftrightarrow I_E\in\text{\rm a-MW}(E).
$$ 
\item 
If $E$ is Dedekind complete then 
$(\text{\rm r-a-MW}(E), \ \|\cdot\|_{\text{\rm r-a-MW}})$
is a closed Riesz subalgebra and an order ideal of $(\text{\rm L}_r(E), \ \|\cdot\|_r)$.
\end{enumerate}
\end{theorem}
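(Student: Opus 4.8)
The plan is to imitate the proof of Theorem~\ref{a-LW-Riesz-algebra} almost line by line, the only structural change being that, by Lemma~\ref{Composition of MW}(ii), $\text{\rm a-MW}(E)$ is a \emph{left} ideal of $\text{\rm L}(E)$ rather than a right ideal, so all composition and domination steps must keep the $\text{\rm a-MW}$-factor on the right. For i), I would first note that $\text{\rm r-a-MW}(E)$ is a linear subspace of $\text{\rm L}_r(E)$: adding two differences of positive $\text{\rm a-MW}$-operators and regrouping produces another such difference because $\text{\rm a-MW}(E)$ is a subspace of $\text{\rm L}(E)$ (Proposition~\ref{MW-algebra}), and scalar multiples are immediate. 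To see that $\text{\rm r-a-MW}(E)$ is a left algebra ideal in $\text{\rm L}_r(E)$, take $T=T_1-T_2\in\text{\rm r-a-MW}(E)$ with $T_i\ge 0$ in $\text{\rm a-MW}(E)$ and $S=S_1-S_2\in\text{\rm L}_r(E)$ with $S_i\ge 0$; then
$$
   ST=(S_1T_1+S_2T_2)-(S_1T_2+S_2T_1),
$$
and each $S_iT_j$ is positive and lies in $\text{\rm a-MW}(E)$ by Lemma~\ref{Composition of MW}(ii), so $ST\in\text{\rm r-a-MW}(E)$; in particular $\text{\rm r-a-MW}(E)$ is a subalgebra. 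For the ``moreover'': if $I_E\in\text{\rm a-MW}(E)$ then $I_E\in\text{\rm r-a-MW}(E)$, whence $T=TI_E\in\text{\rm r-a-MW}(E)$ for every $T\in\text{\rm L}_r(E)$; conversely, if $\text{\rm r-a-MW}(E)=\text{\rm L}_r(E)$ then $I_E\in\text{\rm r-a-MW}(E)$, and since $\text{\rm a-MW}(E)$ is a subspace, $I_E\in\text{\rm a-MW}(E)$.

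For ii), I would assume $E$ Dedekind complete, so that $(\text{\rm L}_r(E),\|\cdot\|_r)$ is a Dedekind complete Banach lattice algebra, and recall that $\text{\rm a-MW}(E)$ is a closed subspace of $\text{\rm L}(E)$ (Proposition~\ref{MW-algebra}) satisfying the domination property (Assertion~\ref{domination for aMW}). By Lemma~\ref{prop elem aMW}, $T\in\text{\rm r-a-MW}(E)$ yields $|T|\in\text{\rm a-MW}(E)$, hence $|T|\in\text{\rm r-a-MW}(E)$; thus $\text{\rm r-a-MW}(E)$ is closed under the modulus and is a Riesz subalgebra of $\text{\rm L}_r(E)$, and it is an order ideal by Proposition~\ref{vect lat}. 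By Assertion~\ref{P-norm}, $(\text{\rm r-a-MW}(E),\|\cdot\|_{\text{\rm r-a-MW}})$ is a Banach space; next I would check that $\|\cdot\|_{\text{\rm r-a-MW}}$ agrees on $\text{\rm r-a-MW}(E)$ with $\|\cdot\|_r$: the inequality $\|T\|_{\text{\rm r-a-MW}}\le\|\,|T|\,\|=\|T\|_r$ follows from $\pm T\le|T|\in\text{\rm a-MW}(E)$, while any $S\in\text{\rm a-MW}(E)$ with $\pm T\le S$ satisfies $|T|\le S$ in $\text{\rm L}_r(E)$, so $\|T\|_r=\|\,|T|\,\|\le\|S\|$ and the infimum gives $\|T\|_r\le\|T\|_{\text{\rm r-a-MW}}$. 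Consequently $\text{\rm r-a-MW}(E)$ is a closed subspace of $(\text{\rm L}_r(E),\|\cdot\|_r)$, and the monotonicity of $\|\cdot\|_{\text{\rm r-a-MW}}$ on this order ideal is inherited from the lattice norm $\|\cdot\|_r$. Finally, for submultiplicativity: from $\pm S\le|S|\in\text{\rm a-MW}(E)$ and $\pm T\le|T|\in\text{\rm a-MW}(E)$ we get $\pm ST\le|ST|\le|S|\,|T|$, with $|S|\,|T|\in\text{\rm a-MW}(E)$ by Lemma~\ref{Composition of MW}(ii), hence
$$
   \|ST\|_{\text{\rm r-a-MW}}\le\|\,|S|\,|T|\,\|\le\|\,|S|\,\|\cdot\|\,|T|\,\|=\|S\|_{\text{\rm r-a-MW}}\cdot\|T\|_{\text{\rm r-a-MW}}.
$$

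I expect no genuine obstacle: the argument is a routine transcription of the proof of Theorem~\ref{a-LW-Riesz-algebra}, and the only point requiring care is the bookkeeping of the left-ideal (rather than right-ideal) structure of $\text{\rm a-MW}(E)$, which is exactly why Lemma~\ref{Composition of MW}(ii) must be invoked with the $\text{\rm a-MW}$-operator occurring as the right factor of each composition.
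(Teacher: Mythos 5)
Your proposal is correct and follows essentially the same route as the paper: part i) via Lemma~\ref{Composition of MW}(ii) giving the left-ideal (hence subalgebra) structure, and part ii) via Lemma~\ref{prop elem aMW}, Proposition~\ref{vect lat}, Assertion~\ref{P-norm}, and the same norm-identification and submultiplicativity computations as in Theorem~\ref{a-LW-Riesz-algebra}, to which the paper itself defers. The only difference is that you spell out the details the paper leaves as ``similar,'' including the correct bookkeeping that keeps the \text{\rm a-MW}-factor on the right in every composition.
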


\begin{proof}
i) It follows from Lemma~\ref{Composition of MW}(ii)
that $\text{\rm r-a-MW}(E)$ is a left algebra ideal and 
hence is a subalgebra of $\text{\rm L}_r(E)$. 
The condition on $I_E$ under 
which $\text{\rm r-a-MW}(E)=\text{\rm L}_r(E)$ is trivial.

ii) It follows from Lemma~\ref{prop elem aMW} 
that $\text{\rm r-a-MW}(E)$ is closed under
lattice operations, and hence is a Riesz subalgebra of $\text{\rm L}_r(E)$.

The rest of the proof is similar to the proof of Theorem \ref{a-LW-Riesz-algebra}.
\end{proof}

\subsection{}
In certain cases $\text{\rm a-LW}$- or $\text{\rm a-MW}$-operators may coincide with bounded 
operators.

\begin{proposition}\label{prop 13}
For any Banach space $X$ the following hold.
\begin{enumerate}[\em i)]
\item
If $F$ is an AL-space then $\text{\rm a-LW}(X,F)=\text{\rm L}(X,F)$.
\item
If $E$ is an AM-space then $\text{\rm a-MW}(E,Y)=\text{\rm L}(E,Y)$.
\end{enumerate}
\end{proposition}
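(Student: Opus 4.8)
The plan is to prove (i) first and then deduce (ii) from it by passing to adjoints. For (i), the first thing I would do is isolate the real mechanism: if $F$ has the positive Schur property, then every bounded operator $T:X\to F$ lies in $\text{\rm a-LW}(X,F)$. Indeed, $T$ is \text{\rm w}-continuous, hence carries a relatively \text{\rm w}-compact set $C\subseteq X$ onto a relatively \text{\rm w}-compact set $T(C)\subseteq F$ (the weak closure of $T(C)$ is contained in the weakly compact set $T(\overline{C}^{w})$). When $F\in(\text{\rm PSP})$, Assertion~\ref{Schur}(iv) together with Definition~\ref{LWC-subsets} gives that every relatively \text{\rm w}-compact subset of $F$ is an \text{\rm LW}-set; applied to $T(C)$ this yields $T\in\text{\rm a-LW}(X,F)$. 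The reverse inclusion $\text{\rm a-LW}(X,F)\subseteq\text{\rm L}(X,F)$ was already recorded in Section~2, so $\text{\rm a-LW}(X,F)=\text{\rm L}(X,F)$.

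To finish (i) it then remains to invoke the classical fact that every AL-space $F$ has the positive Schur property. If one wants this step self-contained, I would note that the AL-norm is additive on $F_{+}$, so the formula $\phi(x)=\|x^{+}\|-\|x^{-}\|$ defines a positive linear functional on $F$; being positive it is automatically bounded, and $\phi(x)=\|x\|$ for every $x\in F_{+}$, whence $\|x_{n}\|=\phi(x_{n})\to 0$ for each \text{\rm w}-null sequence $(x_{n})$ in $F_{+}$. Thus $F\in(\text{\rm PSP})$ and (i) follows.

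For (ii): since $E$ is an AM-space, its dual $E'$ is an AL-space. Given $S\in\text{\rm L}(E,Y)$, its adjoint $S':Y'\to E'$ is bounded, so $S'\in\text{\rm a-LW}(Y',E')$ by part (i), and therefore $S\in\text{\rm a-MW}(E,Y)$ by Assertion~\ref{Bouras--Lhaimer--Moussa}(i). Since $\text{\rm a-MW}(E,Y)\subseteq\text{\rm L}(E,Y)$, we conclude $\text{\rm a-MW}(E,Y)=\text{\rm L}(E,Y)$.

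I do not expect a genuine obstacle here: once the positive Schur property of AL-spaces is accepted as the (standard) input, the remainder is merely a rearrangement of Assertions~\ref{Schur} and~\ref{Bouras--Lhaimer--Moussa}. In fact the argument establishes slightly more, namely that $\text{\rm a-LW}(X,F)=\text{\rm L}(X,F)$ holds for every Banach space $X$ exactly when $F\in(\text{\rm PSP})$, with the dual statement for \text{\rm a-MW}-operators. If one prefers to bypass (PSP) altogether, an equivalent route for (i) is to combine the Dunford--Pettis property of AL-spaces with the characterization of \text{\rm a-LW}-operators from \cite[Prop.1]{EAS} recalled in Section~2, but this only trades one classical fact for another.
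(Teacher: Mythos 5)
Your proof is correct and follows essentially the same route as the paper: a bounded operator carries relatively w-compact sets to relatively w-compact sets, which in an AL-space are \text{\rm LW}-sets, and part (ii) is obtained from (i) by exactly the same passage to adjoints via Assertion~\ref{Bouras--Lhaimer--Moussa}(i). The only difference is that the paper quotes \cite[Thm.5.56]{AlBu} for the fact that relatively w-compact subsets of an AL-space are \text{\rm LW}-sets, whereas you derive it from the positive Schur property of AL-spaces combined with Assertion~\ref{Schur}(iv) — a harmless substitution of one standard fact for another.
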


\begin{proof}
i) Let $S:X\to F$ be a bounded operator and let $C$ be a relatively w-compact subset of $X$. 
Then $S(C)$ is relatively w-compact subset of $F$. By \cite[Thm.5.56]{AlBu}, 
$S(C)$ is an \text{\rm LW}-subset of $F$,
and hence $S\in\text{\rm a-LW}(X,F)$.

ii) Let $T\in\text{\rm L}(E,Y)$. Since $E'$ is an AL-space, it follows from i) that 
$T'\in\text{\rm a-LW}(Y',E')$. Then $T\in\text{\rm a-MW}(E,Y)$ by 
Assertion \ref{Bouras--Lhaimer--Moussa}(i).
\end{proof}

\subsection{}
The next result describes some restrictions on $E$ and $F$ under those
$\text{\rm a-MW}_+(E,F)\subseteq\text{\rm MW}(E,F)$. Recall that a Banach space $X$ has the 
{\em Dunford--Pettis property} whenever $f_n(x_n)\to 0$ for each \text{\rm w}-null
$(x_n)$ in $X$ and each \text{\rm w}-null $(f_n)$ in $X'$ (cf. \cite[p.341]{AlBu}). 
An operator $T:X\to Y$ is called a {\em weak Dunford--Pettis operator} whenever 
$f_n(Tx_n)\to f(x)$ for each $f_n\stackrel{\text{\rm w}}{\to}f$ in $Y'$ and each 
$x_n\stackrel{\text{\rm w}}{\to}x$ in $X$ \cite[p.349]{AlBu}. 

\begin{proposition}\label{necessary conditions for a-MW to be MW}
Let the norm in $E'$ be {\em o}-continuous, $F$ be Dedekind complete,
and $\text{\rm a-MW}_+(E,F)\subseteq\text{\rm MW}(E,F)$. Then either 
$E\in\text{\rm (PSP)}$ or $F$ has {\em o}-continuous norm.
\end{proposition}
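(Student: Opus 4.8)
The plan is to argue by contraposition: assuming $E\notin(\text{\rm PSP})$ and that the norm on $F$ is not o-continuous, I will exhibit a positive operator $S\in\text{\rm a-MW}(E,F)\setminus\text{\rm MW}(E,F)$, contradicting the hypothesis $\text{\rm a-MW}_+(E,F)\subseteq\text{\rm MW}(E,F)$. First I would extract the two relevant pathologies. Since $E\notin(\text{\rm PSP})$, Assertion \ref{Schur} provides a disjoint sequence $(x_n)$ in $E_+$ with $\|x_n\|=1$ for all $n$; the o-continuity of the norm on $E'$ then forces $x_n\to 0$ weakly by Assertion \ref{E' is o-cont}. Fix positive norming functionals $\psi_n\in E'_+$ with $\|\psi_n\|=1$ and $\psi_n(x_n)=1$. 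Since $F$ is not o-continuous, the standard characterization of order continuity (cf. \cite[Thm.4.14]{AlBu}) yields $u\in F_+$ and a disjoint sequence $(u_n)$ in $[0,u]$ with, after passing to a subsequence, $\|u_n\|\ge\varepsilon>0$ for all $n$.

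Next I would realize $S$ as a composition $E\stackrel{Q}{\to}\ell^\infty\stackrel{\widetilde\Psi}{\to}F$. Put $Qx=(\psi_n(x))_n$; this is a positive contraction into $\ell^\infty$, and it is not an MW-operator, since $\|Qx_n\|_\infty\ge\psi_n(x_n)=1$ while $(x_n)$ is disjoint and bounded. Using that $F$ is Dedekind complete, define $\widetilde\Psi:\ell^\infty\to F$ on the positive cone by $\widetilde\Psi\lambda=\sup_n\lambda_nu_n$ — the supremum exists because $\lambda_nu_n\le\|\lambda\|_\infty u$ for every $n$ — and extend it via $\widetilde\Psi\lambda=\widetilde\Psi\lambda^+-\widetilde\Psi\lambda^-$. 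The disjointness of the $u_n$ makes $\widetilde\Psi$ additive on $\ell^\infty_+$, so $\widetilde\Psi$ is a bounded positive operator with $\|\widetilde\Psi\|\le\|u\|$ and $\widetilde\Psi e_n=u_n$. Set $S:=\widetilde\Psi Q$, a positive operator. Since $Qx_n\ge 0$ has $n$-th coordinate at least $1$, we get $Sx_n=\widetilde\Psi(Qx_n)\ge u_n$, hence $\|Sx_n\|\ge\varepsilon$; as $(x_n)$ is disjoint and bounded, $S\notin\text{\rm MW}(E,F)$.

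It remains to verify that $S$ is an a-MW-operator. Because the norm on $E'$ is o-continuous, every bounded operator out of $E$ carries disjoint bounded sequences to weakly null sequences (Assertion \ref{E' is o-cont}), so $S$ is in particular an l-MW-operator; splitting off the weak limit of the second sequence in Definition \ref{Main MWC operators}~a), it therefore suffices to prove that $g_n(Sz_n)\to 0$ for every disjoint bounded $(z_n)$ in $E$ and every weakly null $(g_n)$ in $F'$. Here $g_n(Sz_n)=(\widetilde\Psi'g_n)(Qz_n)$; the sequence $(\widetilde\Psi'g_n)$ is weakly null in $(\ell^\infty)'$ because adjoints are weak-to-weak continuous, while $(Qz_n)$ is weakly null in $\ell^\infty$ because $z_n\to 0$ weakly (again by o-continuity of $E'$) and $Q$ is bounded. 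Since $\ell^\infty$, being an AM-space, has the Dunford--Pettis property, it follows that $(\widetilde\Psi'g_n)(Qz_n)\to 0$, as required. Hence $S\in\text{\rm a-MW}_+(E,F)\setminus\text{\rm MW}(E,F)$, the desired contradiction.

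I expect the delicate points to be, on the one hand, the construction of $\widetilde\Psi$ — its additivity and boundedness on all of $\ell^\infty$ (not merely on $c_0$) is exactly where Dedekind completeness of $F$ is used — and, on the other hand, the reduction ``$S$ is a-MW $\iff$ $S$ is l-MW and $g_n(Sz_n)\to 0$ against weakly null $(g_n)$'', which relies on the standing assumption that $E'$ has o-continuous norm; once these are in place, the Dunford--Pettis property of $\ell^\infty$ makes the composition argument immediate.
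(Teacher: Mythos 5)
Your proof is correct and follows essentially the same route as the paper: factor the counterexample as $E\to\ell^\infty\to F$ using norming functionals for a disjoint, weakly null, non--norm-null positive sequence, use the Dunford--Pettis property of $\ell^\infty$ (plus o-continuity of the norm on $E'$) to get the a-MW property of the composition, and use the lower bound of the embedding on the diagonal to defeat the MW property. The only difference is cosmetic: you build the lattice embedding $\lambda\mapsto\sup_n\lambda_n u_n$ by hand where the paper simply invokes \cite[Thm.4.51]{AlBu}, and you split off the weak limit explicitly where the paper appeals to the notion of a weak Dunford--Pettis operator.
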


\begin{proof}
Suppose $E\not\in\text{\rm (PSP)}$ and $F$ has no o-continuous norm. 
It suffices to construct a positive \text{\rm a-MW}-operator 
from $E$ to $F$ that is not \text{\rm MW}. 

By Assertion~\ref{Schur}, there exists a disjoint w-null sequence $(x_n)$ in $E_+$, that is not norm null. 
WLOG, $\|x_n\|=1$ for all $n$. There exists a sequence $(f_n)$ in $E'_+$ with $\|f_n\|=1$,  
such that 
\begin{equation}\label{1x}
   |f_n(x_n)|\ge\frac{1}{2} \ \ \ \ (\forall n\in{\mathbb N}).
\end{equation}
Define a positive operator $T: E\to\ell^\infty$ by 
\begin{equation}\label{2x}
   Tx:=[(f_n(x))_n] \ \ \ \ (\forall x\in E).
\end{equation}
Since $F$ is Dedekind complete 
and  the norm in $F$ is not order continuous, there exists a lattice embedding 
$S:\ell^\infty\to F$ (cf. \cite[Thm.4.51]{AlBu}). Then, for some $M>0$:
\begin{equation}\label{3x}
   \|Sa\|\ge M\|a\|_\infty \ \ \ (\forall a\in\ell^\infty).
\end{equation}
Since $\ell^\infty$ has the Dunford--Pettis property (cf. \cite[Thm.5.85]{AlBu}), $S$ is a weak Dunford--Pettis operator 
\cite[p.349]{AlBu}, that is,  for each $g_n\stackrel{\text{\rm w}}{\to}g$ in $F'$
and each \text{\rm w}-null $(y_n)$ in $\ell^\infty$:
\begin{equation}\label{4x}
   g_n(Sy_n)\to 0.
\end{equation} 
Let $(z_n)$ be disjoint bounded sequence in $E$ and let $g_n\stackrel{\text{\rm w}}{\to}g$ in $F'$.
Since $E'$ has o-continuous norm, $(z_n)$ is w-null in $E$ by Assertion \ref{E' is o-cont}. Then $(Tz_n)$ is w-null in $\ell^\infty$.
It follows from \eqref{3x} that $g_n(STz_n)\to 0$. Therefore, $ST\in\text{\rm a-MW}(E,F)$. 

By using of \eqref{1x}, \eqref{2x}, and \eqref{3x}, we obtain
\begin{equation}\label{5x}
   \|STx_n\|=\|S[(f_k(x_n))_k]\|\ge 
   M\|(f_k(x_n))_k\|_\infty\ge M|f_n(x_n)|\ge\frac{M}{2}>0.
\end{equation}
It follows from \eqref{5x} that $ST\not\in\text{\rm MW}(E,F)$, and we are done.
\end{proof}

The following result is an \text{\rm LW}-version of 
Proposition \ref{necessary conditions for a-MW to be MW}.

\begin{proposition}\label{sufficient conditions for a-LW subseteq LW}
If $\text{\rm a-LW}_+(E,F)\subseteq\text{\rm LW}(E,F)$, then either 
$E'$ has \text{\rm o}-co\-n\-ti\-nuous norm or $F'\in\text{\rm (PSP)}$.
\end{proposition}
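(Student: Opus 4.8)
The plan is to prove the contrapositive: assuming that $E'$ is not order continuous and that $F'\notin(\text{\rm PSP})$, I will construct a positive operator $T\colon E\to F$ lying in $\text{\rm a-LW}(E,F)$ but not in $\text{\rm LW}(E,F)$ (this is the natural ``LW-side'' analogue of the construction behind Proposition~\ref{necessary conditions for a-MW to be MW}, but carried out directly rather than through adjoints). First I extract the ingredients. Since $E'$ is not order continuous, Assertion~\ref{E' is o-cont} gives a disjoint bounded sequence in $E$ that is not $\text{\rm w}$-null; passing to moduli and a subsequence I obtain a disjoint sequence $(u_n)$ in $E_+$ with $\|u_n\|\le1$ and a $\phi\in E'_+$ with $\delta:=\inf_n\phi(u_n)>0$. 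Let $B_n$ be the band in $E$ generated by $u_n$; these bands are pairwise disjoint, and I define $\phi_n\in E'_+$ by $\phi_n(x):=\sup\{\phi(y):0\le y\le x^+,\ y\in B_n\}-\sup\{\phi(y):0\le y\le x^-,\ y\in B_n\}$. Then $0\le\phi_n\le\phi$, $\phi_n(u_n)=\phi(u_n)\ge\delta$, $\phi_n(u_m)=0$ for $m\ne n$, and disjointness of the $B_n$ gives $\sum_{n=1}^{N}\phi_n(x)\le\phi(x)$ for every $x\in E_+$; replacing $\phi$ by $\sum_{n=1}^{\infty}\phi_n\in E'_+$, which takes the same value on each $u_n$, I may assume $\phi=\sum_n\phi_n$.

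On the range side, $F'\notin(\text{\rm PSP})$ together with Assertion~\ref{Schur} produces a disjoint, $\text{\rm w}$-null, non-norm-null sequence $(g_n)$ in $F'_+$, which I normalize to $\|g_n\|=1$; I then pick $y_n\in F^a_+$ with $\|y_n\|\le1$ and $g_n(y_n)\ge\tfrac12$. (Here a short preliminary reduction is needed so that the $y_n$ can be taken in $F^a$ and not merely in $F_+$: the restriction map $F'\to(F^a)'$ is a lattice homomorphism with kernel the band $(F^a)^\circ$, and moving the witnessing sequence into the band complementary to $(F^a)^\circ$ keeps it disjoint, $\text{\rm w}$-null and non-norm-null while making it act on $F^a$.) Now define $T\colon E\to F$ by $Tx:=\sum_n\phi_n(x^+)y_n-\sum_n\phi_n(x^-)y_n$. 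Because $\sum_n\phi_n(x^{\pm})\le\phi(x^{\pm})$ and $\|y_n\|\le1$, the two series converge absolutely in $F$, so $T$ is a positive bounded operator with $\|T\|\le2\|\phi\|$; and $T(E)\subseteq F^a$ since $F^a$ is a norm-closed band containing every $y_n$.

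That $T\notin\text{\rm LW}(E,F)$ is immediate: $Tu_n=\phi_n(u_n)y_n$, so $\|T'g_n\|_{E'}\ge g_n(Tu_n)=\phi_n(u_n)\,g_n(y_n)\ge\delta/2$, and since $(g_n)$ is disjoint and bounded in $F'$ this means $T'\notin\text{\rm MW}(F',E')$, hence $T\notin\text{\rm LW}(E,F)$ by Assertion~\ref{Meyer-Nieberg}(i). To get $T\in\text{\rm a-LW}(E,F)$ I use \cite[Prop.1]{EAS}: since already $T(E)\subseteq F^a$, I must show $h_n(Tz_n)\to0$ for every disjoint bounded $(h_n)$ in $F'$ and every $\text{\rm w}$-null $(z_n)$ in $E$. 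Expanding, $h_n(Tz_n)=\sum_m\phi_m(z_n)\,h_n(y_m)$. For each fixed $m$, $\phi_m(z_n)\to0$ by weak nullity of $(z_n)$, and $h_n(y_m)\to0$ as $n\to\infty$ because $y_m\in F^a$ forces $[-|y_m|,|y_m|]$ to be an $\text{\rm LW}$-set and hence, by Assertion~\ref{Burkinshaw--Dodds}, every disjoint bounded sequence of $F'$ is uniformly null on it; thus $\sum_{m\le M}\phi_m(z_n)h_n(y_m)\to0$ for each $M$, while $\bigl|\sum_{m>M}\phi_m(z_n)h_n(y_m)\bigr|\le\bigl(\sup_n\|h_n\|\bigr)\bigl(\sum_{m>M}\phi_m\bigr)(|z_n|)$.

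The whole argument therefore comes down to one delicate point — the step I expect to be the main obstacle: showing that the tail masses $\bigl(\sum_{m>M}\phi_m\bigr)(|z_n|)$ tend to $0$ as $M\to\infty$ uniformly in $n$. The functionals $\sum_{m>M}\phi_m$ decrease to $0$ in the Dedekind complete lattice $E'$, but since $E'$ is not order continuous this does not give norm smallness, so one has to exploit the relative weak compactness of $\{z_n\}$ — essentially the Dunford--Pettis uniform-integrability mechanism, which is what makes the model case $E=L^1$ transparent. Making this rigorous will require choosing $\phi$ and the disjoint system $(u_n)$ with care (for instance so that $\phi$ is order continuous on the band generated by $\{u_n\}$, and with a Rosenthal-type dichotomy applied to $(u_n)$ to rule out the obstruction coming from weakly Cauchy disjoint systems such as $z_n=u_n-u_{n+1}$); everything else above is routine bookkeeping.
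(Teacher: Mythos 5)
Your construction is, up to relabeling, the paper's own: the disjoint positive functionals $\phi_m$ with $\sum_{m}\phi_m\le\phi$ define a positive bounded operator $P\colon E\to\ell^1$, $Pz:=(\phi_m(z))_m$ (bounded because $\sum_m|\phi_m(z)|\le\phi(|z|)\le\|\phi\|\,\|z\|$), and your $T$ is $S\circ P$ with $Sa=\sum_m a_m y_m$. The step you leave open --- uniform-in-$n$ smallness of the tails $\bigl(\sum_{m>M}\phi_m\bigr)(|z_n|)$ --- is exactly where the proof is decided, and you have misdiagnosed it: no Dunford--Pettis uniform-integrability argument, no Rosenthal dichotomy, and no special choice of $\phi$ or of $(u_n)$ is needed. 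Since $P$ is bounded it is weak-to-weak continuous, so for $\text{\rm w}$-null $(z_n)$ the sequence $(Pz_n)$ is $\text{\rm w}$-null in $\ell^1$ and hence norm null by the Schur property of $\ell^1$; thus $\sum_m|\phi_m(z_n)|\to 0$ and $|h_n(Tz_n)|\le(\sup_n\|h_n\|)\sum_m|\phi_m(z_n)|\to 0$, with no head/tail splitting at all. This is precisely the mechanism of the paper's proof: it takes a disjoint sequence $0\le x'_k\le x'$ with $\|x'_k\|=1$ in $E'$ (available directly from the failure of $\text{\rm o}$-continuity of the norm of $E'$, which spares the band-projection construction of your $\phi_n$), factors the operator through $\ell^1$ via $x\mapsto(x'_k(x))_k$, and lets the Schur property of $\ell^1$ do all the work. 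As written, your proof is therefore incomplete at its central point, even though the missing lemma is one line away from the structure you already set up.

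A secondary unproved step is your reduction to $y_n\in F^a$. The motivation is sound (the characterization \cite[Prop.1]{EAS} does require $T(E)\subseteq F^a$, a hypothesis the paper's own application of that result passes over silently), but the claim that the components of the witnessing sequence $(g_n)$ in the band complementary to $(F^a)^\circ$ remain non-norm-null is not justified: a priori all the ``mass'' of the $g_n$ could sit on functionals annihilating $F^a$, in which case your reduction produces the zero sequence. This needs an argument (or the verification of $\text{\rm a\text{-}LW}$ should be rerouted so as not to depend on it), and as it stands it is a second genuine gap, independent of the main one above.
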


\begin{proof}
Suppose, in contrary, that the norm in $E'$ is not \text{\rm o}-continuous and $F'\not\in\text{\rm (PSP)}$.
Then we can find a disjoint sequence $(x'_n)$ in $E'$ and $x'\in E'$ such that
$0\le x'_n \le x'$ and $\|x'_n\|=1$ for all $n\in{\mathbb N}$. By Assertion~\ref{Schur}(iii), 
there is a disjoint \text{\rm w}-null sequence $(f_n)$ in $F'_+$ that is not norm null. 
By passing to a subsequence and by scaling, we may assume that $\|f_n\|\ge 1$ for all 
$n\in{\mathbb N}$. Choose a sequence $(y_n)$ in $F_+$ with $\|y_n\|=1$ and  
$|f_n(y_n)|\ge\frac{1}{2}$ for all $n\in{\mathbb N}$. 
We define positive operators $T:E\to \ell^1$ and $S:\ell^1\to F$ by 
$$
   Tx:=(x'_k(x))_k \ \ \& \ \ \ Sa:=\sum_{k=1}^\infty a_k y_k .
$$ 
Let $(z_n)$ be a \text{\rm w}-null sequence in $E$ and $(f_n)$ 
be a disjoint sequence in $B_{F'}$. As $\ell^1$ has the Schur property, 
$\|Tz_n\|\to 0$, and hence
\begin{equation}\label{6x}
   |f_n(STz_n)|\le\|f_n\|\cdot\|S\|\cdot\|Tz_n\|\to 0.
\end{equation} 
By \cite[Prop.1]{EAS}, \eqref{6x} implies $ST\in\text{\rm a-LW}_+(E,F)$. 

If $ST$ were to be an \text{\rm LW}-operator, then $(ST)'$ 
would be an \text{\rm MW}-operator. However,
$$
   \|(ST)'(f_n)\|=\|T'S'(f_n)\|=\|\sum_{k=1}^\infty f_n(y_k)\|\ge |f_{n}(y_n)|\ge\frac{1}{2}
$$
for each $n$, which shows $(ST)'\notin\text{\rm MW}(F',E')$ and hence $ST\notin\text{\rm LW}(E,F)$.
This contradiction completes the proof.
\end{proof}

\begin{proposition}\label{about modulus T a T'}
Suppose the norm in $E'$ is \text{\rm o}-continuous, $F$ is Dedekind complete, 
and $F'$ has the Schur property. Then, for every order bounded $T: E\to F$, 
$|T|$ is \text{\rm a-MW} and $|T|'$ is \text{\rm a-LW}.
\end{proposition}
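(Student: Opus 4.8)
The plan is to establish the first assertion, that $|T|$ is \text{\rm a-MW}, by a direct check of the defining condition in Definition~\ref{Main MWC operators}~a), and then to get the second assertion, that $|T|'$ is \text{\rm a-LW}, for free from Assertion~\ref{Bouras--Lhaimer--Moussa}~(i). First I would record that $|T|$ is actually defined: since $F$ is Dedekind complete, ${\cal L}_{ob}(E,F)={\cal L}_r(E,F)$ is a Dedekind complete vector lattice, so the order bounded operator $T$ possesses a modulus $|T|$, which is a positive bounded operator from $E$ to $F$.

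Next, to show $|T|\in\text{\rm a-MW}(E,F)$, I would take an arbitrary disjoint bounded sequence $(x_n)$ in $E$ and an arbitrary \text{\rm w}-convergent sequence $(g_n)$ in $F'$, say $g_n\stackrel{\text{\rm w}}{\to}g$, and aim to prove $g_n(|T|x_n)\to 0$. Two structural observations do all the work. Since the norm on $E'$ is \text{\rm o}-continuous, Assertion~\ref{E' is o-cont} gives that $(x_n)$ is \text{\rm w}-null; in particular $(x_n)$ is bounded, say $\|x_n\|\le M$ for all $n$. Since $F'$ has the Schur property, the \text{\rm w}-null sequence $(g_n-g)$ is norm null. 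Now split
$$
   g_n(|T|x_n)=(g_n-g)(|T|x_n)+(|T|'g)(x_n).
$$
The first summand is at most $\|g_n-g\|\cdot\|\,|T|\,\|\cdot M$ in modulus, hence tends to $0$; the second summand tends to $0$ because $|T|'g$ is a fixed element of $E'$ and $(x_n)$ is \text{\rm w}-null. Thus $g_n(|T|x_n)\to 0$, so $|T|$ is \text{\rm a-MW}, and then Assertion~\ref{Bouras--Lhaimer--Moussa}~(i) immediately yields $|T|'\in\text{\rm a-LW}(F',E')$.

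I do not expect a genuine obstacle here; the content lies entirely in recognising that the two dual hypotheses are exactly what the \text{\rm a-MW}-test needs: the Schur property of $F'$ collapses weak convergence in $F'$ to norm convergence, while \text{\rm o}-continuity of $E'$ forces disjoint bounded sequences in $E$ to be weakly null (Assertion~\ref{E' is o-cont}), so that a fixed functional annihilates them in the limit. Dedekind completeness of $F$ enters only to guarantee that $|T|$ exists. As a cross-check one may instead verify $|T|'\in\text{\rm a-LW}(F',E')$ directly via \cite[Prop.1]{EAS}: \text{\rm o}-continuity of $E'$ gives $|T|'(F')\subseteq (E')^a=E'$, and the Schur property of $F'$ makes every \text{\rm w}-null $(f_n)$ in $F'$ norm null, whence $\|\,|T|'f_n\,\|\to 0$; but routing through Definition~\ref{Main MWC operators}~a) is the shortest path.
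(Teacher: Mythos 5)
Your proposal is correct and follows essentially the same route as the paper: the same splitting $g_n(|T|x_n)=(g_n-g)(|T|x_n)+(|T|'g)(x_n)$, with the Schur property of $F'$ killing the first term, \text{\rm o}-continuity of the norm on $E'$ (via Assertion~\ref{E' is o-cont}) making $(x_n)$ \text{\rm w}-null to kill the second, and Assertion~\ref{Bouras--Lhaimer--Moussa}~(i) giving $|T|'\in\text{\rm a-LW}(F',E')$. Your explicit remark that Dedekind completeness of $F$ is what guarantees the existence of $|T|$ is a small but welcome addition that the paper leaves implicit.
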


\begin{proof}
Let $(x_n)$ be a disjoint bounded sequence in $E$ and $f_n\stackrel{\rm w}{\to}f$ in $F'$. 
By the Schur property of $F'$, we have $\|f_n-f\|\to 0$. Hence
\begin{equation}\label{(**)}
  |(f_n-f)(|T|x_n)|\le\|f_n -f\|\cdot\||T|x_n\| \to 0.
\end{equation}
Since $E'$ has o-continuous norm, $(x_n)$ is w-null by Assertion~\ref{E' is o-cont} and hence
$f(|T|x_n)=|T|'(f)(x_n) \to 0$. It follows from \eqref{(**)} that $f_n(|T|x_n)\to 0$, 
and hence $|T|$ is \text{\rm a-MW}. 
By Assertion \ref{Bouras--Lhaimer--Moussa}~(i), $|T|'\in\text{\rm a-LW}(F',E')$.
\end{proof}

\subsection{Basic properties of \text{\rm l-MW}-ope\-rators.} 

\begin{proposition}\label{l-MW-domination}
Let $0\le S\le T\in\text{\rm l-MW}(E,F)$ then $S\in\text{\rm l-MW}(E,F)$.
\end{proposition}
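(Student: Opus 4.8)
The plan is to argue directly from the definition of \text{\rm l-MW}-operators, which asks that $Sx_n \stackrel{\text{\rm w}}{\to} 0$ for every disjoint bounded sequence $(x_n)$ in $E$. The natural dual reformulation is that $g(Sx_n) \to 0$ for every $g \in F'$ and every disjoint bounded $(x_n)$ in $E$. So, fixing $0 \le S \le T$ with $T \in \text{\rm l-MW}(E,F)$, a disjoint bounded sequence $(x_n)$ in $E$, and $g \in F'$, I would aim to show $g(Sx_n) \to 0$.

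First I would reduce to positive data. Since $(x_n)$ is disjoint, so is $(|x_n|)$ (in $E$), and it is bounded with the same bound; similarly, splitting $g = g_+ - g_-$, it suffices to control $g_{\pm}(S x_n^{\pm})$, so without loss of generality I may assume $g \in F'_+$. The key estimate is then the order inequality
\begin{equation*}
  |g(Sx_n)| \le g(S|x_n|) \le g(T|x_n|),
\end{equation*}
valid because $0 \le S \le T$ and $g \ge 0$. Now $(|x_n|)$ is a disjoint bounded sequence in $E$, so by $T \in \text{\rm l-MW}(E,F)$ we have $T|x_n| \stackrel{\text{\rm w}}{\to} 0$ in $F$, hence $g(T|x_n|) \to 0$. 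Squeezing gives $g(Sx_n) \to 0$ for every $g \in F'_+$, and then for every $g \in F'$ by the decomposition above; therefore $Sx_n \stackrel{\text{\rm w}}{\to} 0$, i.e.\ $S \in \text{\rm l-MW}(E,F)$.

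There is essentially no obstacle here: the argument is the standard ``disjointness is inherited by moduli, and order domination plus a positive functional controls the norm/weak behaviour'' pattern, exactly as in the proof of Proposition~\ref{l-LW-domination}. The only point that needs a word of care is that testing against positive functionals suffices to detect weak convergence to $0$ of a bounded sequence — this is immediate from $F' = (F')_+ - (F')_+$ and linearity, and the sequence $(Sx_n)$ is bounded since $S$ is bounded (being dominated by the bounded operator $T$, or simply by $0 \le S \le T$ in $\text{\rm L}_r(E,F)$). I would present the chain of inequalities inline and conclude in one short paragraph.
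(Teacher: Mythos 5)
Your argument is correct and is essentially the paper's own proof: pass to the disjoint bounded sequence $(|x_n|)$, use $T|x_n|\stackrel{\text{\rm w}}{\to}0$, and squeeze via $|g(Sx_n)|\le g(S|x_n|)\le g(T|x_n|)$; the only cosmetic difference is that the paper tests against $|f|$ for arbitrary $f\in F'$ instead of decomposing $g=g_+-g_-$ (and your passing mention of $x_n^{\pm}$ is unnecessary but harmless).
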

\begin{proof}
Let $(x_n)$ be disjoint bounded sequence in $E$. Then $(|x_n|)$ is also disjoint bounded,
and then $T|x_n|\stackrel{\text{\rm w}}{\to}0$.
It follows from $|Sx_n|\le S|x_n|$ that
$$
   |f(Sx_n)|\le|f|(S|x_n|)\le|f|(T|x_n|)\to 0  \ \ \ \ (\forall f\in F'),
$$
and hence $Sx_n\stackrel{\text{\rm w}}{\to}0$. Thus, $S\in\text{\rm l-MW}(E,F)$.
\end{proof}

\begin{lemma}\label{l-MW-closed}
Let $\text{\rm l-MW}(E,Y)\ni T_n\stackrel{\|\cdot\|}{\to} T$. 
Then $T\in\text{\rm l-MW}(E,Y)$.
\end{lemma}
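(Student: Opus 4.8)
The plan is to mimic the proof of Lemma~\ref{l-LW-closed}, working directly with the defining condition of \text{\rm l-MW}-operators: an operator $S:E\to Y$ is \text{\rm l-MW} iff $Sx_n\stackrel{\text{\rm w}}{\to}0$ for every disjoint bounded sequence $(x_n)$ in $E$. So fix a disjoint bounded sequence $(x_n)$ in $E$, say with $\|x_n\|\le M$ for all $n$, and fix $f\in Y'$; it suffices to show $f(Tx_n)\to 0$.

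First I would use norm convergence $T_k\stackrel{\|\cdot\|}{\to}T$ to choose, given $\varepsilon>0$, an index $k$ with $\|T-T_k\|\le\varepsilon$. Since $T_k\in\text{\rm l-MW}(E,Y)$, we have $T_kx_n\stackrel{\text{\rm w}}{\to}0$, so $f(T_kx_n)\to 0$ and hence $|f(T_kx_n)|\le\varepsilon$ for all $n\ge n_0$. Then for $n\ge n_0$ I would estimate
$$
   |f(Tx_n)|\le|f(Tx_n)-f(T_kx_n)|+|f(T_kx_n)|\le\|f\|\,\|T-T_k\|\,\|x_n\|+\varepsilon\le(\|f\|\,M+1)\varepsilon.
$$
Since $\varepsilon>0$ was arbitrary, $f(Tx_n)\to 0$, and since $f\in Y'$ was arbitrary, $Tx_n\stackrel{\text{\rm w}}{\to}0$; thus $T\in\text{\rm l-MW}(E,Y)$.

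There is no real obstacle here — the argument is the standard ``uniform limit of a sequence of operators satisfying a sequential convergence condition tested against a fixed functional'' pattern, identical in structure to Lemma~\ref{l-LW-closed}. The only point worth a moment's care is that the bound $\|f\|\,M+1$ does not depend on $n$ (it depends only on the fixed $f$ and the bound $M$ on the disjoint sequence), so the estimate is genuinely uniform in $n\ge n_0$ and the conclusion $f(Tx_n)\to 0$ follows; one should also note that the closedness of $\text{\rm l-MW}(E,Y)$ as a subspace already gives that $T$ is bounded, so $Tx_n$ makes sense and the estimate is legitimate.
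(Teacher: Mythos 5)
Your proof is correct and follows essentially the same argument as the paper's: approximate $T$ in operator norm by some $T_k\in\text{\rm l-MW}(E,Y)$, test against a fixed $f\in Y'$, and combine the uniform estimate $\|f\|\,\|T-T_k\|\,\|x_n\|$ with the eventual smallness of $|f(T_kx_n)|$. The only cosmetic difference is that the paper normalizes the disjoint sequence to lie in $B_E$ while you carry an explicit bound $M$, which changes nothing of substance.
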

\begin{proof}
Let $(x_n)$ be a disjoint sequence in $B_E$, and $f\in Y'$. 
Take $\varepsilon>0$, and pick an $m\in\mathbb{N}$
with $\|T-T_m\|\le\varepsilon$. Since $T_m\in\text{\rm l-MW}(E,Y)$  
then $|f(T_mx_n)|\le\varepsilon$ for $n\ge n_0$. It follows from
$$
   |f(Tx_n)|\le|f(Tx_n)-f(T_mx_n)|+|f(T_mx_n)|\le 
$$
$$
   \|T'-T'_m\|\|f\|\|x_n\|+|f(T_mx_n)|\le(\|f\|+1)\varepsilon 
   \ \ \ \ (\forall n\ge n_0),
$$
that $f(Tx_n)\to 0$, and hence $T\in\text{\rm l-MW}(E,Y)$.
\end{proof}
\noindent
In view of Proposition \ref{l-MW-domination},
the next lemma follows from Proposition~\ref{prop elem}~ii).

\begin{lemma}\label{prop elem l-MW}
Let an operator $T:E\to F$ possess the modulus. TFAE.
\begin{enumerate}[{\em i)}]
\item $T\in\text{\rm r-l-MW}(E,F)$.
\item $|T|\in\text{\rm l-MW}(E,F)$.
\end{enumerate}
\end{lemma}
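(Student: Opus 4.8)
The plan is to deduce the statement directly from Proposition \ref{prop elem}~ii), exactly as announced in the sentence preceding Lemma \ref{prop elem l-MW}. To invoke that proposition I need three ingredients: first, that the class $\mathcal{P}=\text{\rm l-MW}(E,F)$ is a linear subspace of $\text{\rm L}(E,F)$ satisfying $\mathcal{P}\pm\mathcal{P}\subseteq\mathcal{P}$; second, that $\mathcal{P}$-operators satisfy the domination property; and third, that the modulus $|T|$ of $T$ is assumed to exist in $\mathcal{L}(E,F)$ (which is hypothesised). The domination property is furnished immediately by Proposition \ref{l-MW-domination}. The closure of $\text{\rm l-MW}(E,F)$ under sums and differences is routine: if $(x_n)$ is disjoint bounded in $E$ and $S,T\in\text{\rm l-MW}(E,F)$, then $Sx_n\stackrel{\text{\rm w}}{\to}0$ and $Tx_n\stackrel{\text{\rm w}}{\to}0$, hence $(S\pm T)x_n\stackrel{\text{\rm w}}{\to}0$, so $S\pm T\in\text{\rm l-MW}(E,F)$; linearity under scalars is equally clear. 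So all hypotheses of Proposition \ref{prop elem}~ii) are met with $V=E$, $W=F$.

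The argument then proceeds in two short implications. For i)$\Rightarrow$ii): if $T\in\text{\rm r-l-MW}(E,F)$, then by definition $T=T_1-T_2$ with $T_1,T_2$ positive $\text{\rm l-MW}$-operators; since $0\le T_+\le T_1$ and $0\le T_-\le T_2$, the domination property (Proposition \ref{l-MW-domination}) gives $T_+,T_-\in\text{\rm l-MW}(E,F)$, whence $|T|=T_++T_-\in\text{\rm l-MW}(E,F)$ by additivity. For ii)$\Rightarrow$i): if $|T|\in\text{\rm l-MW}(E,F)$, then from $T=T_+-T_-$ with $0\le T_\pm\le|T|$ the domination property again yields $T_+,T_-\in\text{\rm l-MW}(E,F)$, so $T$ is a difference of two positive $\text{\rm l-MW}$-operators, i.e. $T\in\text{\rm r-l-MW}(E,F)$. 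In fact both directions are literally special cases of the proof of Proposition \ref{prop elem}~ii) already given in the excerpt, so one may simply cite it.

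There is essentially no obstacle here; the only point requiring a moment's care is making sure the existence of $|T|$ in $\mathcal{L}(E,F)$ is genuinely part of the hypothesis (it is, stated as ``Let an operator $T:E\to F$ possess the modulus'') so that the decomposition $T=T_+-T_-$ with $0\le T_\pm\le|T|$ is available inside the target space of operators rather than merely formally. Given that, the lemma is immediate. I would therefore write the proof as a one-line appeal: \emph{By Proposition~\ref{l-MW-domination}, $\text{\rm l-MW}$-operators satisfy the domination property, and $\text{\rm l-MW}(E,F)$ is clearly a linear subspace of $\text{\rm L}(E,F)$; the claim follows from Proposition~\ref{prop elem}~ii).} If a referee wanted the two implications spelled out, I would add the four-line expansion above verbatim.
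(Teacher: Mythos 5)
Your proof is correct and coincides with the paper's own argument: the paper derives the lemma in one line from Proposition~\ref{l-MW-domination} (domination for $\text{\rm l-MW}$-operators) together with Proposition~\ref{prop elem}~ii), exactly as you propose. Your additional verification that $\text{\rm l-MW}(E,F)$ is closed under sums and differences is a harmless (and welcome) explicit check of a hypothesis the paper leaves tacit.
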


\begin{theorem}
The following statements hold. 
\begin{enumerate}[{\em i)}]
\item 
$\text{\rm r-l-MW}(E)$ is a subalgebra of $\text{\rm L}_r(E)$. Moreover, 
$$
   \text{\rm r-l-MW}(E)=\text{\rm L}_r(E) \ \Longleftrightarrow I_E\in\text{\rm l-MW}(E).
$$ 
\item 
If $E$ is Dedekind complete then 
$(\text{\rm r-l-MW}(E), \ \|\cdot\|_{\text{\rm r-l-MW}})$
is a closed Riesz subalgebra and an order ideal of $(\text{\rm L}_r(E), \ \|\cdot\|_r)$.
\end{enumerate}
\end{theorem}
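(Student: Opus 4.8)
The plan is to follow the template already used for Theorem~\ref{a-LW-Riesz-algebra} and for the analogous $\text{\rm r-l-LW}$ and $\text{\rm r-a-MW}$ results, feeding in the three $\text{\rm l-MW}$-specific ingredients: the domination property (Proposition~\ref{l-MW-domination}), the operator-norm closedness (Lemma~\ref{l-MW-closed}), and the modulus criterion (Lemma~\ref{prop elem l-MW}).

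For i), I would first observe that $\text{\rm l-MW}(E)$ is a \emph{left} ideal in $\text{\rm L}(E)$: if $T\in\text{\rm l-MW}(E)$ and $S\in\text{\rm L}(E)$, then for a disjoint bounded sequence $(x_n)$ in $E$ one has $Tx_n\stackrel{\text{\rm w}}{\to}0$, and weak continuity of $S$ gives $STx_n\stackrel{\text{\rm w}}{\to}0$, so $ST\in\text{\rm l-MW}(E)$. As $\text{\rm l-MW}(E)$ is also a linear subspace of $\text{\rm L}(E)$, passing to differences of positive operators shows that $\text{\rm r-l-MW}(E)$ is a left algebra ideal of $\text{\rm L}_r(E)$, hence a subalgebra. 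For the identity criterion: if $I_E\in\text{\rm l-MW}(E)$, the left-ideal property applied with $T=I_E$ gives $S=S\,I_E\in\text{\rm l-MW}(E)$ for every $S\in\text{\rm L}(E)$, so $\text{\rm l-MW}(E)=\text{\rm L}(E)$ and $\text{\rm r-l-MW}(E)=\text{\rm L}_r(E)$; conversely, if $\text{\rm r-l-MW}(E)=\text{\rm L}_r(E)$ then $I_E=T_1-T_2$ with positive $T_1,T_2\in\text{\rm l-MW}(E)$, whence $0\le I_E\le T_1$ and Proposition~\ref{l-MW-domination} yields $I_E\in\text{\rm l-MW}(E)$. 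One may also record that, via Remark~\ref{reflexive not aMW}~ii) and Assertion~\ref{E' is o-cont}, this condition is equivalent to $E'$ being a KB-space.

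For ii), assume $E$ Dedekind complete, so that $\text{\rm L}_r(E)$ is a Dedekind complete Banach lattice with $\|T\|_r=\|\,|T|\,\|$ and every $T\in\text{\rm L}_r(E)$ has a modulus. By Lemma~\ref{prop elem l-MW}, $T\in\text{\rm r-l-MW}(E)$ implies $|T|\in\text{\rm l-MW}(E)\subseteq\text{\rm r-l-MW}(E)$, so $\text{\rm r-l-MW}(E)$ is closed under the lattice operations of $\text{\rm L}_r(E)$, i.e. it is a Riesz subalgebra. Since $\text{\rm l-MW}(E)$ is an operator-norm-closed (Lemma~\ref{l-MW-closed}) linear subspace of $\text{\rm L}(E)$ with the domination property (Proposition~\ref{l-MW-domination}), Proposition~\ref{vect lat} shows that $\text{\rm r-l-MW}(E)$ is an order ideal of $\text{\rm L}_r(E)$, and Assertion~\ref{P-norm} equips it with the complete norm $\|\cdot\|_{\text{\rm r-l-MW}}$. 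The remaining points go through exactly as in the proof of Theorem~\ref{a-LW-Riesz-algebra}: for $|S|\le|T|$ in $\text{\rm r-l-MW}(E)$ one has $|S|,|T|\in\text{\rm l-MW}(E)$ by Lemma~\ref{prop elem l-MW}, and since every $U$ with $\pm S\le U\in\text{\rm l-MW}(E)$ satisfies $|S|\le U$, monotonicity of the lattice norm gives $\|S\|_{\text{\rm r-l-MW}}=\|\,|S|\,\|$; hence $\|\cdot\|_{\text{\rm r-l-MW}}$ agrees with $\|\cdot\|_r$ on $\text{\rm r-l-MW}(E)$, which is therefore norm-closed in $(\text{\rm L}_r(E),\|\cdot\|_r)$; and $\pm ST\le|ST|\le|S|\,|T|$ together with $|S|\,|T|\in\text{\rm l-MW}(E)$ (left-ideal property) yields submultiplicativity $\|ST\|_{\text{\rm r-l-MW}}\le\|S\|_{\text{\rm r-l-MW}}\,\|T\|_{\text{\rm r-l-MW}}$.

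The only place needing a moment's care is the observation that $\text{\rm l-MW}(E)$ is a left and not a right ideal: a right factor $S$ need not send a disjoint sequence onto a disjoint sequence, so $TS$ can fail to be an $\text{\rm l-MW}$-operator, which is exactly why one obtains a left algebra ideal rather than a two-sided one. Beyond that, the argument is entirely routine, each input being either one of the already-established $\text{\rm l-MW}$ lemmas or the $\text{\rm a-LW}$ template, and I expect no genuine obstacle.
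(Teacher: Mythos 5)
Your proof is correct and follows essentially the same route as the paper's: the left-ideal property of $\text{\rm l-MW}(E)$ via weak continuity of bounded operators for i), and for ii) Lemma~\ref{prop elem l-MW} for the Riesz-subalgebra property, Proposition~\ref{vect lat} for the order-ideal property, Assertion~\ref{P-norm} together with Lemma~\ref{l-MW-closed} for completeness, and the Theorem~\ref{a-LW-Riesz-algebra} template for the norm identity and submultiplicativity, merely spelling out the steps the paper leaves as ``trivial'' or ``similar'' (e.g.\ the converse of the $I_E$ criterion via Proposition~\ref{l-MW-domination}). The only slip is the written inclusion $\text{\rm l-MW}(E)\subseteq\text{\rm r-l-MW}(E)$, which fails for non-regular $\text{\rm l-MW}$-operators; since you apply it only to the positive operator $|T|$, the intended conclusion $|T|\in\text{\rm r-l-MW}(E)$ still holds.
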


\begin{proof}
i) Since each bounded operator is \text{\rm w}-continuous,
$\text{\rm r-l-MW}(E)$ is a left ideal and hence is 
a subalgebra of $\text{\rm L}_r(E)$. 
The condition on $I_E$ under 
that $\text{\rm r-l-MW}(E)=\text{\rm L}_r(E)$ is trivial.

ii) By Lemma~\ref{prop elem l-MW}, $\text{\rm r-l-MW}(E)$ is closed under
lattice operations, and hence is a Riesz subalgebra of $\text{\rm L}_r(E)$.
It follows from Assertion \ref{P-norm} and Lemma {l-MW-closed}
that $(\text{\rm r-l-LW}(E), \ \|\cdot\|_{\text{\rm r-l-LW}})$ is a Banach space.
The rest of the proof is similar to the proof of Theorem \ref{a-LW-Riesz-algebra}.
\end{proof}

\subsection{}
We include also the following semi-duality result. Recall that 
$X\in(\text{\rm GP})$ if each w$^\ast$-null sequence in $X'$ is w-null.

\begin{proposition}\label{l-BLM}
The following hold.
\begin{enumerate}[\em i)]
\item 
  $T'\in\text{\rm l-LW}(Y',E')\Longrightarrow T\in\text{\rm l-MW}(E,Y)$, and if
  additionally $E'\in(\text{\rm DDw$^\ast$P})$ then 
  $T'\in\text{\rm l-LW}(Y',E')\Longleftrightarrow T\in\text{\rm l-MW}(E,Y)$. 
\item 
  $S'\in\text{\rm l-MW}(F',X')\Longrightarrow S\in\text{\rm l-LW}(X,F)$, and if
  additionally $X\in(\text{\rm GP})$ then 
  $S'\in\text{\rm l-MW}(F',X')\Longleftrightarrow S\in\text{\rm l-LW}(X,F)$.
\end{enumerate}
\end{proposition}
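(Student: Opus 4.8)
The plan is to reduce all four implications to the dual characterization of \text{\rm l-LW}-operators from Lemma \ref{l-LW-operators}, namely that a bounded $R:Z\to G$ lies in $\text{\rm l-LW}(Z,G)$ precisely when $R'g_n\stackrel{\text{\rm w}^\ast}{\to}0$ in $Z'$ for every disjoint bounded $(g_n)$ in $G'$, combined with three elementary facts: the canonical embedding $E\hat{\to}E''$ is a lattice homomorphism, so it carries disjoint bounded sequences of $E$ onto disjoint bounded sequences of $E''$, and it intertwines an operator with its bitranspose, $\widehat{Tx}=T''\hat x$; weak convergence implies $\text{\rm w}^\ast$-convergence; and the bitranspose $T''=(T')'$ of a bounded $T$ is $\text{\rm w}^\ast$-to-$\text{\rm w}^\ast$ continuous.

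For i), to prove $T'\in\text{\rm l-LW}(Y',E')\Rightarrow T\in\text{\rm l-MW}(E,Y)$, I would apply Lemma \ref{l-LW-operators} to $T':Y'\to E'$: the hypothesis becomes $T''h_n\stackrel{\text{\rm w}^\ast}{\to}0$ in $Y''$ for every disjoint bounded $(h_n)$ in $E''$. Given a disjoint bounded $(x_n)$ in $E$, the sequence $(\hat x_n)$ is disjoint bounded in $E''$, hence $T''\hat x_n=\widehat{Tx_n}\stackrel{\text{\rm w}^\ast}{\to}0$; evaluating on arbitrary $f\in Y'$ gives $f(Tx_n)\to 0$, i.e. $Tx_n\stackrel{\text{\rm w}}{\to}0$, so $T\in\text{\rm l-MW}(E,Y)$. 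For the reverse implication under the extra hypothesis $E'\in(\text{\rm DDw}^\ast\text{\rm P})$: by Proposition \ref{l-LW vs DDw*P} (applied with $E'$ in the role of $F$ and $Y'$ in the role of $X$) one has $\text{\rm l-LW}(Y',E')=\text{\rm L}(Y',E')$, so every such $T'$ belongs to $\text{\rm l-LW}(Y',E')$; alternatively, a disjoint bounded $(h_n)$ in $E''$ is $\text{\rm w}^\ast$-null because $E'\in(\text{\rm DDw}^\ast\text{\rm P})$, and $\text{\rm w}^\ast$-to-$\text{\rm w}^\ast$ continuity of $T''$ together with Lemma \ref{l-LW-operators} again gives $T'\in\text{\rm l-LW}(Y',E')$.

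For ii), the implication $S'\in\text{\rm l-MW}(F',X')\Rightarrow S\in\text{\rm l-LW}(X,F)$ is immediate: for disjoint bounded $(f_n)$ in $F'$ the hypothesis gives $S'f_n\stackrel{\text{\rm w}}{\to}0$ in $X'$, hence $S'f_n\stackrel{\text{\rm w}^\ast}{\to}0$, and Lemma \ref{l-LW-operators} yields $S\in\text{\rm l-LW}(X,F)$. For the converse under $X\in(\text{\rm GP})$, Lemma \ref{l-LW-operators} gives $S'f_n\stackrel{\text{\rm w}^\ast}{\to}0$ in $X'$ for every disjoint bounded $(f_n)$ in $F'$, and the Grothendieck property of $X$ promotes $\text{\rm w}^\ast$-null sequences in $X'$ to $\text{\rm w}$-null ones, so $S'f_n\stackrel{\text{\rm w}}{\to}0$, i.e. $S'\in\text{\rm l-MW}(F',X')$.

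I do not expect a genuine obstacle; the delicate points are purely bookkeeping. One must keep straight which Banach space resp. Banach lattice plays the role of domain and of range in each application of Lemma \ref{l-LW-operators} (for $T'$ these are $Y'$ and $E'$, with duals $Y''$ and $E''$), and one must record the standard facts that the canonical embedding is a Riesz homomorphism intertwining $T$ with $T''$ and that $T''$ is the adjoint of $T'$, hence $\text{\rm w}^\ast$-to-$\text{\rm w}^\ast$ continuous. With these in place each of the four arguments is a couple of lines.
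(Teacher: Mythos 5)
Your proposal is correct and follows essentially the same route as the paper: all four implications are reduced to the dual characterization in Lemma~\ref{l-LW-operators}, using that $(\hat x_n)$ is disjoint bounded in $E''$, that $T''\hat x_n=\widehat{Tx_n}$, that w-convergence implies $\text{\rm w}^\ast$-convergence, and (for the converse in i)) that disjoint bounded sequences in $E''$ are $\text{\rm w}^\ast$-null when $E'\in(\text{\rm DDw}^\ast\text{\rm P})$ and $T''$ is $\text{\rm w}^\ast$-to-$\text{\rm w}^\ast$ continuous. Your first-mentioned shortcut for that converse, citing Proposition~\ref{l-LW vs DDw*P} to get $\text{\rm l-LW}(Y',E')=\text{\rm L}(Y',E')$ outright, is also valid and slightly quicker than the paper's direct argument, which coincides with your alternative.
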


\begin{proof}
i) 
Let  $T'\in\text{\rm l-LW}(Y',E')$. 
Take a disjoint bounded sequence $(x_n)$ in $E$. 
Then $(\hat{x}_n)$ is disjoint bounded in $E''$.
By Lemma \ref{l-LW-operators}, 
$T''\hat{x}_n\stackrel{\text{\rm w}^\ast}{\to}0$ in $Y''$, and hence 
$$
   y(Tx_n)=(T'y)x_n=\hat{x}_n(T'y)=T''\hat{x}_n(y)\to 0 \ \ \ \ (\forall y\in Y').
$$ 
Thus $Tx_n\stackrel{\text{\rm w}}{\to}0$, and hence $T\in\text{\rm l-MW}(E,Y)$.

Now, let $T\in\text{\rm l-MW}(E,Y)$ and $E'\in(\text{\rm DDw$^\ast$P})$.
Take a disjoint bounded sequence $(f_n)$ in $E''$. Since $E'\in(\text{\rm DDw$^\ast$P})$
then $f_n\stackrel{\text{\rm w}^\ast}{\to}0$ in $E''$, and hence 
$T''f_n\stackrel{\text{\rm w}^\ast}{\to}0$ in $Y''$.
Lemma \ref{l-LW-operators} implies $T'\in\text{\rm l-LW}(Y',E')$. 

ii) 
Let $S'\in\text{\rm l-MW}(F',X')$. Take a disjoint bounded sequence $(f_n)$ in $F'$.
Then $S'f_n\stackrel{\text{\rm w}}{\to}0$ in $X'$, and hence 
$S'f_n\stackrel{\text{\rm w}^\ast}{\to}0$ in $X'$. It follows from  
Lemma \ref{l-LW-operators} that $S\in\text{\rm l-LW}(X,F)$.

Now, let $S\in\text{\rm l-LW}(X,F)$ and $X\in(\text{\rm GP})$.
Take a disjoint bounded sequence $(f_n)$ in $F'$. By Lemma \ref{l-LW-operators},
$S'f_n\stackrel{\text{\rm w}^\ast}{\to}0$ in $X'$. Since $X\in(\text{\rm GP})$
then $S'f_n\stackrel{\text{\rm w}}{\to}0$, and hence $S'\in\text{\rm l-MW}(F',X')$.
\end{proof}

%%%%%%%%%%%%%%%%%%%%
\section{Order \text{\rm LW}- (\text{\rm MW}-) operators}
%%%%%%%%%%%%%%%%%%%%

Order \text{\rm LW}- (\text{\rm MW}-) operators were introduced recently in \cite{BLM3,BLM2}.
Here we continue to investigate these classes of operators.
The idea of definition of an order \text{\rm LW}-operator $T:E\to F$ 
lies in choosing in Assertion~\ref{Burkinshaw--Dodds},
$$
   \text{$A:=T[-x,x]$ for arbitrary $x\in E_+$; and $B:=B_{F'}$}. 
$$

\subsection{Main definitions.}
\begin{definition}\label{Main o-W operators}
{\em
A bounded operator $T:E\to F$ is called:
\begin{enumerate}[a)]
\item   {\em order} \text{\rm LW} (shortly, $T$ is \text{\rm o-LW}) 
if $T[0,x]$ is an L-weakly compact subset of $F$ for every $x\in E_+$ \cite[Def.2.1]{BLM2};
\item  {\em order} \text{\rm MW} (shortly, $T$ is \text{\rm o-MW}) 
if, for every disjoint bounded sequence $(x_n)$ in $E$ and every order bounded sequence $(f_n)$ in $F'$, 
we have $f_n(Tx_n)\to 0$ (see \cite[Def.2.2]{BLM3} and \cite[Def.2.2]{BLM2}). 
\end{enumerate}}
\end{definition}
\noindent
Clearly, 
$$
   \text{\rm W}(E,Y)\subseteq\text{\rm o-W}(E,Y);
$$
$$
   \text{\rm LW}(E,F)\subseteq\text{\rm o-LW}(E,F);
$$
$$
   \text{\rm MW}(E,F)\subseteq\text{\rm o-MW}(E,F).
$$
Since the operators $\text{I}_{c_0}$ and $\text{I}_{c}$ are not w-compact,
one can directly derive from Definition \ref{Main o-W operators} that
$$
   \text{I}_{c_0}\in\text{\rm o-LW}(c_0)\setminus\text{\rm LW}(c_0) \ \ \text{\rm and} \ \ 
   \text{I}_{c}\in\text{\rm o-MW}(c)\setminus\text{\rm MW}(c). 
$$
The following two assertions (see \cite[Thm.2.1 and Thm.2.2]{BLM2}) 
give important 
characterizations of \text{\rm o-LW}-operators.

\begin{assertion}\label{Bouras--Lhaimer--Moussa - order LW}
For an operator $T:E\to F$, TFAE.
\begin{enumerate}[\em (i)]
\item 
$T\in\text{\rm o-LW}(E,F)$.
\item 
$(|T'f_n|)$ is $\text{\rm w}^\ast$-null for every disjoint bounded $(f_n)$ in $F'$.
\item 
$f_n(Tx_n)\to 0$ for every order bounded $(x_n)$ in $E$ and every 
disjoint bounded $(f_n)$ in $F'$.
\end{enumerate}
\end{assertion}

\begin{assertion}\label{Bouras--Lhaimer--Moussa - order2}
$S\in\text{\rm o-LW}(E',F')$ iff 
$$
   (Sf_n)(y_n)=y_n(Sf_n)=(S'y_n)(f_n)\to 0
$$ 
for every order bounded $(f_n)$ in $E'$ and every 
disjoint bounded $(y_n)$ in $F$.
\end{assertion}
\noindent
The next semi-duality result \cite[Thm.2.3]{BLM2} plays a key 
role in the investigation of
$\text{\rm o-LW}$- and $\text{\rm o-MW}$-operators.

\begin{assertion}\label{Bouras--Lhaimer--Moussa - order}
The following hold.
\begin{enumerate}[\em (i)]
\item 
$T\in\text{\rm o-MW}(E,F)\Longleftrightarrow T'\in\text{\rm o-LW}(F',E')$.
\item 
$S'\in\text{\rm o-MW}(F',E')\Longrightarrow S\in\text{\rm o-LW}(E,F)$.
\end{enumerate}
\end{assertion}
\noindent
It was also observed in \cite[Rem.2.3]{BLM2} that, in general, 
$$
S\in\text{\rm o-LW}(E,F)\not\Longrightarrow S'\in\text{\rm o-MW}(F',E'),
$$ 
as  
$I_{c_0}\in\text{\rm o-LW}(c_0)$, 
but $(I_{c_0})'=I_{\ell^1}\not\in\text{\rm o-MW}(\ell^1)$.

\subsection{}
A rank one bounded operator need not to be \text{\rm o-LW}. 
Consider the operator $T:\ell^2\to\ell^\infty$ defined by
$Ta=(\sum\limits_{k=1}^\infty a_k){\mathbb 1}_{\mathbb N}$.
Then $Te_n={\mathbb 1}_{\mathbb N}$ and, as ${\mathbb 1}_{\mathbb N}\not\in(\ell^\infty)^a$, 
the operator $T$ is not \text{\rm o-LW}. The following result is an improvement 
of \cite[Thm.2.5]{BLM2}:

\begin{proposition}\label{regular operators are o-MW}
For a Banach lattice $E$, TFAE.
\begin{enumerate}[\em i)]
\item 
The norm in $E'$ is \text{\rm o}-continuous. 
\item 
For every $F$, ${\cal L}_{ob}(E,F)\subseteq\text{\rm o-MW}(E,F)$.
\item 
For every $F$, $\text{\rm L}_r(E,F)\subseteq\text{\rm o-MW}(E,F)$.
\item 
For every $F$, $\text{\rm K}(E,F)\subseteq\text{\rm o-MW}(E,F)$.
\item 
Each rank one bounded operator $T:E\to E$ is \text{\rm o-MW}.
\end{enumerate}
\end{proposition}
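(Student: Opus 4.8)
The plan is to establish the cycle (i) $\Rightarrow$ (ii) $\Rightarrow$ (iii) $\Rightarrow$ (v) $\Rightarrow$ (i) and to attach (iv) through the two further links (i) $\Rightarrow$ (iv) $\Rightarrow$ (v). Throughout, the only real tool is Assertion~\ref{E' is o-cont}: the norm of $E'$ is \text{\rm o}-continuous iff every disjoint bounded sequence in $E$ is \text{\rm w}-null. Recall also (Definition~\ref{Main o-W operators}) that verifying membership in $\text{\rm o-MW}(E,F)$ amounts to showing $f_n(Tx_n)\to 0$ whenever $(x_n)$ is disjoint bounded in $E$ and $(f_n)$ is order bounded in $F'$.

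For (i) $\Rightarrow$ (ii) I would take $T\in{\cal L}_{ob}(E,F)$ (automatically bounded, by Section~1), a disjoint bounded $(x_n)$ in $E$, and an order bounded $(f_n)$ in $F'$ with $|f_n|\le g\in F'_+$. Since the adjoint $T':F'\to E'$ is order bounded, the set $\{T'f_n:n\in{\mathbb N}\}\subseteq T'[-g,g]$ is order bounded, say $|T'f_n|\le h\in E'_+$; then, using $|\psi(x)|\le|\psi|(|x|)$, $|f_n(Tx_n)|=|(T'f_n)(x_n)|\le|T'f_n|(|x_n|)\le h(|x_n|)$, and $h(|x_n|)\to 0$ because $(|x_n|)$ is disjoint bounded, hence \text{\rm w}-null by (i). The link (ii) $\Rightarrow$ (iii) is immediate from $\text{\rm L}_r(E,F)\subseteq{\cal L}_{ob}(E,F)$. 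For (iii) $\Rightarrow$ (v) and (iv) $\Rightarrow$ (v) I would note that a rank one operator $Tx=\varphi(x)z$ lies in $\text{\rm K}(E,E)\cap\text{\rm L}_r(E,E)$: it is compact, and it is regular because splitting $\varphi$ and $z$ into positive and negative parts presents $T$ as the difference of the positive operators $x\mapsto\varphi^{+}(x)z^{+}+\varphi^{-}(x)z^{-}$ and $x\mapsto\varphi^{+}(x)z^{-}+\varphi^{-}(x)z^{+}$. The link (i) $\Rightarrow$ (iv) is shorter still: for $T\in\text{\rm K}(E,F)$ and $(x_n),(f_n),g$ as above, $(x_n)$ is \text{\rm w}-null by (i), so $\|Tx_n\|\to 0$ since compact operators send \text{\rm w}-null sequences to norm null ones, and $|f_n(Tx_n)|\le\|f_n\|\,\|Tx_n\|\le\|g\|\,\|Tx_n\|\to 0$ by monotonicity of the norm on $F'$.

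The heart of the argument — and the only step requiring an idea rather than a routine estimate — is (v) $\Rightarrow$ (i), which I would prove by contraposition. If $\|\cdot\|_{E'}$ is not \text{\rm o}-continuous, Assertion~\ref{E' is o-cont} supplies a disjoint bounded sequence $(x_n)$ in $E$ and $\varphi\in E'$ (necessarily $\varphi\neq 0$) with $\varphi(x_n)\not\to 0$. Pick any $z\in E\setminus\{0\}$ and, by Hahn--Banach, $f\in E'$ with $f(z)=1$, and consider the rank one operator $Tx:=\varphi(x)z$ on $E$. Testing it against the \emph{constant} — hence order bounded — sequence $f_n\equiv f$ gives $f_n(Tx_n)=\varphi(x_n)f(z)=\varphi(x_n)\not\to 0$, so $T\notin\text{\rm o-MW}(E,E)$, contradicting (v). This is exactly where the proposition improves \cite[Thm.2.5]{BLM2}: a single rank one operator, probed by a constant sequence of functionals, already detects the failure of \text{\rm o}-continuity of $\|\cdot\|_{E'}$. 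I expect no genuine obstacle here — only the two bookkeeping points that a constant sequence is order bounded and that $T$ truly has rank one (guaranteed by $\varphi\neq 0$); all the remaining implications are routine.
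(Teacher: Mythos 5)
Your proof is correct and complete: the direct verification of (i)$\Rightarrow$(ii) via $|f_n(Tx_n)|\le|T'f_n|(|x_n|)\le h(|x_n|)$ with $(|x_n|)$ disjoint bounded and hence w-null by Assertion~\ref{E' is o-cont}, the complete-continuity argument for (i)$\Rightarrow$(iv), the observation that a rank one bounded operator $x\mapsto\varphi(x)z$ is both compact and regular (so each of (iii), (iv) implies (v)), and the contrapositive of (v)$\Rightarrow$(i) with $Tx=\varphi(x)z$ tested against a constant order bounded sequence of functionals all check out. The route, however, is genuinely different from the paper's. The paper outsources (i)$\Leftrightarrow$(ii) to \cite[Thm.2.5]{BLM2}, proves (iii)$\Rightarrow$(i) directly using the identity operator, and runs both (i)$\Rightarrow$(iv) and (v)$\Rightarrow$(i) through the semi-duality $T\in\text{\rm o-MW}(E,F)\Leftrightarrow T'\in\text{\rm o-LW}(F',E')$ (Assertion~\ref{Bouras--Lhaimer--Moussa - order}): for (i)$\Rightarrow$(iv) it shows $T'[0,x]$ is relatively compact, hence almost order bounded, hence an \text{\rm LW}-set in the o-continuous $E'$ (Assertion~\ref{Meyer 3.6.2}); for (v)$\Rightarrow$(i) it takes $g\in E'_+\setminus(E')^a$, forms $Tx=g(x)y_0$, and notes that $T'f=g$ prevents $T'[0,f]$ from being an \text{\rm LW}-subset of $E'$. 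Your version works entirely from Definition~\ref{Main o-W operators} and Assertion~\ref{E' is o-cont}, so it is more elementary and self-contained (no external citation, no adjoint or \text{\rm LW}-set machinery); what the paper's route buys is brevity, given the duality apparatus already set up, plus the byproduct that adjoints of compact operators are \text{\rm o-LW} when $E'$ has o-continuous norm. The only ingredient you use that the paper does not state explicitly is the standard fact that compact operators map w-null sequences to norm null ones.
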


\begin{proof} 
i) $\Longleftrightarrow$ ii) is in \cite[Thm.2.5]{BLM2} 
and the implication ii) $\Longrightarrow$ iii) is trivial. 

iii) $\Longrightarrow$ i). If not, there is a disjoint bounded sequence $(x_n)$ in $E$ 
and a functional $f\in E'$ such that $f(x_n)\ge 1$ for all $n\in\mathbb{N}$
by Assertion \ref{E' is o-cont}. The identity operator $T=I: E\to E$ is \text{\rm o-MW} by $iii)$.
However, for the order bounded sequence $(f_n)$ in $F'$ with 
$f_n\equiv f$, it holds $f_n(Tx_n)=f(x_n)\ge 1$, and hence $f_n(Tx_n)\not\to 0$, 
violating Assertion~\ref{Bouras--Lhaimer--Moussa - order2}. The obtained 
contradiction proves the implication.

i) $\Longrightarrow$ iv).
Let $T\in\text{\rm K}(E,F)$ and let $x\in F'_+$. Since $T'\in\text{\rm K}(F',E')$, then
$T'[0,x]$ is relatively compact and hence is almost order bounded. As the norm in $E'$
is \text{\rm o}-continuous, $T'[0,x]$ is an \text{\rm LW}-subset of $E'$ 
by Assertion~\ref{Meyer 3.6.2}.
This yields $T'\in\text{\rm o-LW}(F',E')$, and hence $T\in\text{\rm o-MW}(E,F)$,
by Assertion \ref{Bouras--Lhaimer--Moussa - order}(i).

iv) $\Longrightarrow$ v). It is obvious.

v) $\Longrightarrow$ i).
If the norm in $E'$ is not \text{\rm o}-continuous, 
take $g\in E'_+\setminus(E')^a$. 
Let $T$ be a bounded rank one operator
defined by $T(x)=g(x)y_0$ with $y_0\in E_+\setminus\{0\}$.
Then $T\in\text{\rm o-MW}(E)$. Choose $f\in E'$ with $f(y_0)=1$.
Since $T'f=f(y_0)g=g\not\in(E')^a$, $T'[0,f]$ is not an \text{\rm LW}-subset of $E'$ 
as each \text{\rm LW}-subset of $E'$ must lie in $(E')^a$. 
Therefore $T'\not\in\text{\rm o-LW}(E')$. This yields $T\not\in\text{\rm o-MW}(E)$ 
by Assertion \ref{Bouras--Lhaimer--Moussa - order}(i). 
The obtained contradiction completes the proof.
\end{proof}

\begin{corollary}\label{regular oMwc which are not regularly oMwc}
There is a Banach lattice $E$ with 
$\text{\rm L}_r(E)\subsetneqq\text{\rm o-MW}(E)$. 
\end{corollary}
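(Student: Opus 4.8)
The plan is to realize the strict inclusion inside the Krengel space of Example~\ref{Krengel}. Put $E:=(\oplus_{n=1}^\infty\ell^2_{2^n})_0$. As noted there, its dual is $E'=(\oplus_{n=1}^\infty\ell^2_{2^n})_1$, so the norm on $E'$ is \text{\rm o}-continuous; hence Proposition~\ref{regular operators are o-MW} applies to this $E$, and from the equivalences (i)$\Leftrightarrow$(iii)$\Leftrightarrow$(iv) we obtain simultaneously $\text{\rm L}_r(E)\subseteq\text{\rm o-MW}(E)$ and $\text{\rm K}(E)\subseteq\text{\rm o-MW}(E)$. Thus it suffices to exhibit a compact operator on $E$ that is not regular.

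For that I would take $\alpha_n:=2^{-n/3}$ and let $T:E\to E$ be the diagonal operator of \eqref{1kr}, i.e. $Tx=(\alpha_1T_1x_1,\alpha_2T_2x_2,\dots)$. Since $\alpha=(\alpha_n)\in c_0$, Example~\ref{Krengel} gives $T\in\text{\rm K}(E)$; and, as is also recorded there, for this choice of $\alpha$ the operator $T$ is not order bounded (its formal modulus would have block norms $\alpha_n\||T_n|\|=2^{n/6}\to\infty$, so $|T|$ cannot exist, and $T$ itself is not order bounded). Since $\text{\rm L}_r(E)\subseteq{\cal L}_{ob}(E)$, it follows that $T\notin\text{\rm L}_r(E)$. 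Combining the two parts, $T\in\text{\rm o-MW}(E)\setminus\text{\rm L}_r(E)$ while $\text{\rm L}_r(E)\subseteq\text{\rm o-MW}(E)$, so the inclusion is proper for this $E$, which is exactly the assertion of the corollary.

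The argument has no genuine obstacle: it just assembles Example~\ref{Krengel} with Proposition~\ref{regular operators are o-MW}. The only two points that need care are that it is the \text{\rm o}-continuity of the norm of $E'$ (and not of $E$) that powers Proposition~\ref{regular operators are o-MW}, and that ``$T$ not order bounded'' is precisely what yields ``$T$ not regular'' via $\text{\rm L}_r\subseteq{\cal L}_{ob}$; once these are noted, nothing further is required.
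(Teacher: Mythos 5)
Your argument is correct and follows essentially the same route as the paper: take the Krengel space $E$ of Example~\ref{Krengel}, use Proposition~\ref{regular operators are o-MW} (via the \text{\rm o}-continuity of the norm of $E'$) to get $\text{\rm L}_r(E)\cup\text{\rm K}(E)\subseteq\text{\rm o-MW}(E)$, and then use the compact, non-order-bounded operator $T$ with $\alpha_n=2^{-n/3}$ as the witness. The only cosmetic difference is that the paper excludes $T$ from $\text{\rm L}_r(E)$ by noting $|T|$ does not exist, while you use $\text{\rm L}_r\subseteq{\cal L}_{ob}$ and the recorded fact that $T$ is not order bounded; these are interchangeable here.
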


\begin{proof}
Let $E$ be as in Example \ref{Krengel}. It follows from Proposition \ref{regular operators are o-MW} 
that $\text{\rm L}_r(E)\subseteq\text{\rm o-MW}(E)$.
Consider the operator $T$ defined by (\ref{1kr}) in Example \ref{Krengel} for 
$\alpha_n=2^{-\frac{n}{3}}$. As $T\in\text{\rm K}(E)$, applying Proposition 
\ref{regular operators are o-MW} again gives $T\in\text{\rm o-MW}(E)$.
However $|T|$ does not exist and hence $T\not\in\text{\rm L}_r(E)$.
\end{proof}

\begin{corollary}\label{rank one op1}
Let $E$ and $F$ be Banach lattices with $\dim(E)\ge 1$ and $\dim(F)\ge 1$ then$:$ 
\begin{enumerate}[\em i)]
\item $E'$ has $\text{\rm o}$-continuous norm $\Longleftrightarrow$ 
every rank one bounded operator $T:E\to F$ is \text{\rm o-MW}$;$
\item $F$ has $\text{\rm o}$-continuous norm $\Longleftrightarrow$ 
every rank one bounded operator $T:E\to F$ is \text{\rm o-LW}.
\end{enumerate}
\end{corollary}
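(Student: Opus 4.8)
The plan is to reduce both biconditionals to facts already established, exploiting that every rank one bounded operator $T:E\to F$ has the form $Tx=g(x)y_0$ with $g\in E'$, $y_0\in F$, and --- being finite rank --- is compact. Throughout one may assume $g\ne 0$ and $y_0\ne 0$, since the zero operator is trivially both \text{\rm o-LW} and \text{\rm o-MW}. Each equivalence then splits into an easy ``o-continuity $\Rightarrow$ good behaviour'' direction and a contrapositive ``failure of o-continuity $\Rightarrow$ a bad rank one operator'' direction.

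For i), the forward implication is immediate: if the norm on $E'$ is \text{\rm o}-continuous, then $\text{\rm K}(E,F)\subseteq\text{\rm o-MW}(E,F)$ by Proposition~\ref{regular operators are o-MW}, and $T$ is compact. For the converse I argue by contraposition. If the norm on $E'$ is not \text{\rm o}-continuous, then by Assertion~\ref{E' is o-cont} there is a disjoint bounded sequence $(x_n)$ in $E$ that is not \text{\rm w}-null, so one may fix $g\in E'$ with $g(x_n)\not\to 0$. Choosing any $y_0\in F\setminus\{0\}$ and, by Hahn--Banach, $\phi\in F'$ with $\phi(y_0)=1$, the constant (hence order bounded) sequence $f_n\equiv\phi$ in $F'$ gives $f_n(Tx_n)=g(x_n)\phi(y_0)=g(x_n)\not\to 0$ for $T=g\otimes y_0$, so $T\notin\text{\rm o-MW}(E,F)$. (A variant routes through Assertion~\ref{Bouras--Lhaimer--Moussa - order}(i), noting $g\in T'[0,\phi]\setminus (E')^a$, but the direct argument via Definition~\ref{Main o-W operators} is shorter.)

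For ii), the forward implication uses that \text{\rm o}-continuity of $F$ gives $\text{\rm K}(E,F)\subseteq\text{\rm LW}(E,F)\subseteq\text{\rm o-LW}(E,F)$ --- the first inclusion holding because the norm on $F$ is \text{\rm o}-continuous (Section~1), the second being the inclusion recorded after Definition~\ref{Main o-W operators} --- so the compact operator $T$ is \text{\rm o-LW}. For the converse, if the norm on $F$ is not \text{\rm o}-continuous then $F^a\ne F$; since $F^a$ is an ideal, were it to contain $F_+$ it would be all of $F$, so there is $y_0\in F_+\setminus F^a$. Pick a nonzero positive functional $g\in E'_+$ (available as $\dim E\ge 1$) and $u\in E_+$ with $g(u)=1$. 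Then for $T=g\otimes y_0$ we have $y_0=Tu\in T[0,u]$ (since $u\in[0,u]$), while every \text{\rm o-LW} operator sends $[0,u]$ onto an \text{\rm LW}-subset of $F$, and \text{\rm LW}-subsets of $F$ are contained in $F^a$; this contradicts $y_0\notin F^a$, so not every rank one bounded operator $E\to F$ is \text{\rm o-LW}.

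The only points requiring a little care --- and where I would be most attentive --- are the two extraction steps: obtaining $y_0\in F_+\setminus F^a$ from the mere failure of \text{\rm o}-continuity of the norm on $F$, and the symmetric choice in i) of a disjoint bounded non-\text{\rm w}-null sequence in $E$ together with a separating functional on $F$. Both are standard and already appear (for the dual side) in the proof of Proposition~\ref{regular operators are o-MW}; everything else is routine bookkeeping with the rank one factorization $T=g\otimes y_0$.
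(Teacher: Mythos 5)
Your proof is correct, and its overall strategy coincides with the paper's: the forward implications follow from compactness of rank one operators together with the known inclusions (Proposition~\ref{regular operators are o-MW}~i)$\Rightarrow$iv) for \text{\rm o-MW}, and $\text{\rm K}\subseteq\text{\rm LW}\subseteq\text{\rm o-LW}$ for \text{\rm o-LW}), while the converses are settled by a rank one operator $g\otimes y_0$ built from a witness of the failure of \text{\rm o}-continuity. The differences are minor but worth noting. For i), the paper simply cites Proposition~\ref{regular operators are o-MW}, whose converse step v)$\Rightarrow$i) is phrased for operators $E\to E$ and goes through the adjoint: it takes $g\in E'_+\setminus(E')^a$ and shows $T'[0,f]$ is not an \text{\rm LW}-set, invoking the duality $T\in\text{\rm o-MW}\Leftrightarrow T'\in\text{\rm o-LW}$; you instead test Definition~\ref{Main o-W operators}~b) directly with a disjoint bounded non-\text{\rm w}-null sequence $(x_n)$ (Assertion~\ref{E' is o-cont}) and the constant order bounded sequence $f_n\equiv\phi$, which avoids the duality and, usefully, handles an arbitrary codomain $F$ with $\dim F\ge 1$ -- a detail the paper's citation leaves implicit. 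For ii), the paper exhibits explicitly a disjoint non-norm-null sequence in $[0,y]$ and observes it lies in $\text{\rm sol}(T[0,x])$, whereas you pick $y_0\in F_+\setminus F^a$ and invoke the recorded fact that \text{\rm LW}-sets are contained in $F^a$; these are the same construction with the witness packaged differently. Both of your converse arguments are sound, and the small extraction steps you flag (existence of $y_0\in F_+\setminus F^a$, of a separating positive functional) are handled exactly as loosely in the paper itself.
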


\begin{proof}
i) It is a part of Proposition \ref{regular operators are o-MW}.

ii) The implication ($\Longrightarrow$) is obvious.

ii) ($\Longleftarrow$) If not, there exist $y\in F_+$ and a disjoint sequence $(y_n)$ in $[0,y]$,
that is not norm null. Choose $x\in E_+$, $f\in E'_+$ with $f(x)=1$ and define 
a rank one operator $T$ as $Tz=f(z)y$. 
Then $Tx=y$ and $(y_n)$ is a disjoint sequence in $\text{sol}\,(T[0,x])$, 
and it is not norm null, violating that $T$ 
must be \text{\rm o-LW}. A contradiction.
\end{proof}

\subsection{}
It follows directly from the definitions:   
$$
   \text{\rm LW}(E,F)\subseteq\text{\rm o-LW}(E,F)\cap\text{\rm a-LW}(E,F);
$$
$$
   \text{\rm MW}(E,F)\subseteq\text{o-MW}(E,F)\cap\text{a-MW}(E,F).
$$
If $F$ has $\text{\rm o}$-continuous norm, then 
${\cal L}_{ob}(E,F)\subseteq\text{\rm o-LW}(E,F)$, and if
$E'$ has o-continuous norm, 
then ${\cal L}_{ob}(E,F)\subseteq\text{\rm o-MW}(E,F)$ 
by \cite[Thm.2.5]{BLM2}. Clearly,
$$
   \text{I}_{c_0}\in\text{\rm o-LW}(c_0)\setminus\text{\rm a-LW}(c_0)\ \text{\rm and}\ 
   \text{I}_{\ell^1}\in\text{\rm a-LW}(\ell^1)\setminus\text{\rm a-MW}(\ell^1).
$$
It can be easily shown that 
$\text{I}_{\ell^2}\in\text{\rm o-MW}(\ell^2)\setminus\text{\rm a-MW}(\ell^2)$
(cf. Remark~\ref{reflexive not aMW}). 
By Proposition \ref{prop 13}, for each $X$: 
$\text{\rm o-LW}(X,F)=\text{\rm a-LW}(X,F)=\text{\rm L}(X,F)$ 
if $F$ is an AL-space; 
and $\text{\rm o-MW}(E,X)=\text{\rm a-MW}(E,X)=\text{\rm L}(E,X)$ 
if $E$ is an AM-space.
In general, $\text{\rm a-MW}(E,Y)\not\subseteq\text{\rm o-MW}(E,Y)$. 
Indeed,
\begin{equation}\label{(gav5)}
\text{I}_{C[0,1]}\in\text{\rm a-MW}(C[0,1])\ 
\text{\rm (e.g., by Proposition~\ref{prop 13}~ii))}.
\end{equation}
However, 
\begin{equation}\label{(gav6)}
\text{I}_{C[0,1]}\not\in\text{\rm o-MW}(C[0,1]) \ 
\text{\rm (e.g., by Proposition~\ref{T is o-MW} below)}.
\end{equation}
It follows directly that $\text{\rm a-LW}(\ell^2)\subseteq\text{\rm o-LW}(\ell^2)$.
Then $\text{\rm a-MW}(\ell^2)\subseteq\text{\rm o-MW}(\ell^2)$ by the duality.
Since $\text{I}_{\ell^2}\in\text{\rm o-MW}(\ell^2)\setminus\text{\rm a-MW}(\ell^2)$ then $\text{\rm a-MW}(\ell^2)\subsetneqq\text{\rm o-MW}(\ell^2)$.

\subsection{Algebraic properties of  \text{\rm o-LW}- and 
\text{\rm o-MW}- operators.}
The first of the following two elementary lemmas is 
a version of Lemma \ref{Composition of LW} 
for \text{\rm o-LW}-operators.

\begin{lemma}\label{Composition of oLW}
If $S\in{\cal L}_{ob}(E,F)$ and $T\in\text{\rm o-LW}(F,G)$ 
then $T  S\in\text{\rm o-LW}(E,G)$.
\end{lemma}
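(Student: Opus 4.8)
The statement to prove is: if $S \in \mathcal{L}_{ob}(E,F)$ and $T \in \text{\rm o-LW}(F,G)$, then $TS \in \text{\rm o-LW}(E,G)$. The natural approach is to use the characterization of \text{\rm o-LW}-operators given in Assertion~\ref{Bouras--Lhaimer--Moussa - order}(iii): an operator $R:E\to G$ is \text{\rm o-LW} iff $g_n(Rx_n)\to 0$ for every order bounded sequence $(x_n)$ in $E$ and every disjoint bounded sequence $(g_n)$ in $G'$. So I would start with an order bounded sequence $(x_n)$ in $E$, say $(x_n)\subseteq[-u,u]$ for some $u\in E_+$, and a disjoint bounded sequence $(g_n)$ in $G'$, and aim to show $g_n(TSx_n)\to 0$.

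The key observation is that $S$ being order bounded maps the order interval $[-u,u]$ into an order interval of $F$: indeed $S[-u,u]\subseteq[-S_1u - S_2u,\, S_1u+S_2u]$ if $S=S_1-S_2$ is a decomposition into positive operators, or more directly, since $S$ is order bounded, $S[-u,u]$ is order bounded in $F$, hence contained in some $[-v,v]$ with $v\in F_+$. Thus $(Sx_n)$ is an order bounded sequence in $F$. Now applying the characterization of $T\in\text{\rm o-LW}(F,G)$ via Assertion~\ref{Bouras--Lhaimer--Moussa - order}(iii) to the order bounded sequence $(Sx_n)$ in $F$ and the disjoint bounded sequence $(g_n)$ in $G'$ gives exactly $g_n(T(Sx_n))\to 0$, which is what we want. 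It is also worth noting that $TS$ is bounded since $\mathcal{L}_{ob}(E,F)\subseteq\text{\rm L}(E,F)$ (established in the preliminaries) and $T$ is bounded, so $TS$ qualifies as a bounded operator in the sense required by Definition~\ref{Main o-W operators}.

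Alternatively, one could argue directly from Definition~\ref{Main o-W operators}a): for $x\in E_+$, one has $TS[0,x]\subseteq T[S_1x - S_2x\text{-interval}]$; more cleanly, $S[0,x]$ is order bounded in $F$, say $S[0,x]\subseteq[0,w]$ need not hold but $S[0,x]\subseteq[-w,w]$ for some $w\in F_+$, hence $TS[0,x]\subseteq T[-w,w]$, and $T[-w,w]$ is an \text{\rm LW}-subset of $G$ because $T$ is \text{\rm o-LW} (using that \text{\rm LW}-sets are closed under passing to subsets and that $T[-w,w]\subseteq T[0,w]-T[0,w]$, with $T[0,w]$ an \text{\rm LW}-set and the fact, from the solid-hull definition, that a difference of two \text{\rm LW}-sets is again an \text{\rm LW}-set). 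This second route requires the mild lemma that subsets and algebraic differences of \text{\rm LW}-sets are \text{\rm LW}-sets, which is immediate from Definition~\ref{LWC-subsets} since $\text{\rm sol}$ behaves well under these operations.

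\textbf{Main obstacle.} There is no serious obstacle here; this is a routine composition lemma. The only point requiring a moment's care is the bookkeeping that $S$ order bounded sends order bounded sets to order bounded sets (trivial from the definition of order bounded operator), and — if one takes the second route — that the class of \text{\rm LW}-sets is stable under taking subsets and differences. I would favor the first route via Assertion~\ref{Bouras--Lhaimer--Moussa - order}(iii) as it is the shortest and sidesteps even these minor points, reducing the whole proof to two sentences.
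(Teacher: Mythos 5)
Your main argument is correct and is exactly the intended one: the paper states this lemma without proof as an elementary fact, and your two-line route — $(Sx_n)$ is order bounded in $F$ because $S$ is order bounded, then apply the characterization ``$f_n(Tx_n)\to 0$ for every order bounded $(x_n)$ and every disjoint bounded $(f_n)$'', which is Assertion~\ref{Bouras--Lhaimer--Moussa - order LW}(iii) (not Assertion~\ref{Bouras--Lhaimer--Moussa - order}, the semi-duality statement you cite by label), together with the observation that $TS$ is bounded since ${\cal L}_{ob}(E,F)\subseteq\text{\rm L}(E,F)$ — settles it. Only your alternative route is slightly glib at one point: that a difference of two \text{\rm LW}-sets is again an \text{\rm LW}-set is not immediate from Definition~\ref{LWC-subsets} (a disjoint sequence in $\text{\rm sol}(A-B)$ need not split into disjoint pieces), and is cleanest via the $\varepsilon$-characterization of Assertion~\ref{Meyer 3.6.2}(ii); but that route is dispensable.
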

\noindent
Since \text{\rm o-LW}-operators satisfy the 
domination property by \cite[Cor.2.3]{BLM2}, 
the second lemma follows from
Proposition~\ref{prop elem}~ii).

\begin{lemma}\label{prop elem oLW}
Let an operator $T\in{\cal L}(E,F)$ possess the modulus. TFAE.
\begin{enumerate}[{\em i)}]
\item $T$ is a regularly $\text{\rm o-LW}$-operator.
\item $|T|$ is an $\text{\rm o-LW}$-operator.
\end{enumerate}
\end{lemma}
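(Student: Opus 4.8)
The plan is to recognize this as a direct instance of Proposition~\ref{prop elem}~ii) applied to the specific class $\mathcal{P}=\text{\rm o-LW}(E,F)$, so essentially all the work has been done and the proof reduces to checking that the hypotheses of that proposition are met. First I would observe that $\text{\rm o-LW}(E,F)$ is a subspace of $\text{\rm L}(E,F)$ and that it is closed under sums and differences (i.e.\ $\mathcal{P}\pm\mathcal{P}\subseteq\mathcal{P}$); this is immediate from, say, characterization (ii) in Assertion~\ref{Bouras--Lhaimer--Moussa - order LW}, since if $(|T_1'f_n|)$ and $(|T_2'f_n|)$ are both $\text{\rm w}^\ast$-null then so is $(|(T_1\pm T_2)'f_n|)$ by the estimate $|(T_1\pm T_2)'f_n|\le|T_1'f_n|+|T_2'f_n|$ and positivity of evaluation.

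Next I would invoke the two facts the paper has already lined up: that $\text{\rm o-LW}$-operators satisfy the domination property (cited from \cite[Cor.2.3]{BLM2}, as noted just before the statement), and that $T$ is assumed to possess a modulus $|T|$ in $\mathcal{L}(E,F)$. These are exactly the two hypotheses of Proposition~\ref{prop elem}~ii). Applying that proposition with $V=E$, $W=F$, and $\mathcal{P}=\text{\rm o-LW}(E,F)$ gives at once that $T$ is a regularly $\text{\rm o-LW}$-operator if and only if $|T|$ is an $\text{\rm o-LW}$-operator, which is precisely the claimed equivalence (i)$\Longleftrightarrow$(ii).

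I do not anticipate any real obstacle here: the statement is deliberately formulated as a corollary of the general machinery, and the only thing to be careful about is that the ambient space in which $|T|$ is taken — $\mathcal{L}(E,F)$, or rather the order-bounded operators — is the same as the one referenced in Proposition~\ref{prop elem}, and that the domination property is stated for the same class of operators between the same pair of Banach lattices. Both are fine: the paper's Assertion from \cite[Cor.2.3]{BLM2} is quoted in the form needed, and $\text{\rm o-LW}$-operators are order bounded (indeed they are bounded, and the modulus hypothesis is imposed explicitly). So the proof is a one-line citation of Proposition~\ref{prop elem}~ii), mirroring verbatim the proofs of Lemmas~\ref{prop elem aLW}, \ref{prop elem l-LW}, \ref{prop elem aMW}, \ref{prop elem l-MW}. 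If one wanted to spell it out rather than cite, one would just repeat the short argument of Proposition~\ref{prop elem}~ii): writing $T=T_+-T_-$ with $0\le T_\pm\le|T|$, the domination property upgrades $|T|\in\text{\rm o-LW}$ to $T_\pm\in\text{\rm o-LW}$, hence $T\in\text{\rm r-o-LW}$; conversely $T=T_1-T_2$ with positive $T_i\in\text{\rm o-LW}$ forces $0\le T_\pm\le T_i$ and thus, again by domination, $T_\pm\in\text{\rm o-LW}$, so $|T|=T_++T_-\in\text{\rm o-LW}$.
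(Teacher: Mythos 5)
Your proposal is correct and is exactly the paper's argument: the paper proves this lemma by the one-line observation that \text{\rm o-LW}-operators satisfy the domination property by \cite[Cor.2.3]{BLM2} and then invokes Proposition~\ref{prop elem}~ii). Your extra check that $\text{\rm o-LW}(E,F)$ is closed under sums and differences (needed since Proposition~\ref{prop elem} assumes ${\cal P}\pm{\cal P}\subseteq{\cal P}$) is sound and harmless.
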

\noindent
We include a version of Theorem \ref{a-LW-Riesz-algebra} 
for \text{\rm o-LW}-operators.

\begin{theorem}\label{o-LW-algebra}
The following statements hold. 
\begin{enumerate}[{\em i)}]
\item 
$\text{\rm r-o-LW}(E)$ is a subalgebra of $\text{\rm L}_r(E)$. Moreover, 
$$
   \text{\rm r-o-LW}(E)=\text{\rm L}_r(E) \ \Longleftrightarrow I_E\in\text{\rm o-LW}(E).
$$ 
\item 
If $E$ is Dedekind complete then 
$(\text{\rm r-o-LW}(E), \ \|\cdot\|_{\text{\rm r-o-LW}})$
is a closed Riesz subalgebra and an order ideal of $(\text{\rm L}_r(E), \ \|\cdot\|_r)$.
\end{enumerate}
\end{theorem}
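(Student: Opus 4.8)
The plan is to imitate, almost verbatim, the proof of Theorem \ref{a-LW-Riesz-algebra}, substituting the earlier lemmas for \text{\rm o-LW}-operators at each point where the \text{\rm a-LW} case invoked its own auxiliary results. Concretely, for part i) I would first observe that regular operators on $E$ are order bounded, so Lemma \ref{Composition of oLW} applies: if $S\in\text{\rm L}_r(E)$ and $T\in\text{\rm r-o-LW}(E)$, then writing $T=T_1-T_2$ with $T_1,T_2$ positive \text{\rm o-LW}-operators, each $T_iS$ and $ST_i$ is order bounded, and by Lemma \ref{Composition of oLW} we get $T_iS,ST_i\in\text{\rm o-LW}(E)$; hence $TS,ST\in\text{\rm r-o-LW}(E)$. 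Thus $\text{\rm r-o-LW}(E)$ is a (two-sided, in fact) algebra ideal in $\text{\rm L}_r(E)$, in particular a subalgebra. The equivalence $\text{\rm r-o-LW}(E)=\text{\rm L}_r(E)\Longleftrightarrow I_E\in\text{\rm o-LW}(E)$ is immediate: if $I_E$ is \text{\rm o-LW}, then for any $T=T_1-T_2\in\text{\rm L}_r(E)$ we have $T_i=T_iI_E\in\text{\rm o-LW}(E)$ by the ideal property, so $T\in\text{\rm r-o-LW}(E)$; the converse is trivial since $I_E$ is positive and regular.

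For part ii), assume $E$ Dedekind complete, so $\text{\rm L}_r(E)$ is a Dedekind complete vector lattice and a Banach lattice algebra under $\|\cdot\|_r$. Lemma \ref{prop elem oLW} gives $T\in\text{\rm r-o-LW}(E)\Longrightarrow |T|\in\text{\rm o-LW}(E)\subseteq\text{\rm r-o-LW}(E)$, so $\text{\rm r-o-LW}(E)$ is a Riesz subalgebra; and since \text{\rm o-LW}-operators satisfy the domination property, Proposition \ref{vect lat} (applied with ${\cal P}=\text{\rm o-LW}(E)$, which is a subspace of $\text{\rm L}(E)$) shows $\text{\rm r-o-LW}(E)$ is an order ideal of $\text{\rm L}_r(E)$. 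Next I would verify, exactly as in Theorem \ref{a-LW-Riesz-algebra}, that the norm $\|\cdot\|_{\text{\rm r-o-LW}}$ from Assertion \ref{P-norm} agrees with $\|\cdot\|_r$ on $\text{\rm r-o-LW}(E)$: for $T\in\text{\rm r-o-LW}(E)$ one has $|T|\in\text{\rm o-LW}(E)$ and $\pm T\le|T|$, so $\|T\|_{\text{\rm r-o-LW}}\le\|\,|T|\,\|=\|T\|_r$, while any $S\in\text{\rm o-LW}(E)$ with $\pm T\le S$ satisfies $|T|\le S$ and hence $\|T\|_r=\|\,|T|\,\|\le\|S\|$, giving the reverse inequality. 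Monotonicity of the norm ($|S|\le|T|\Rightarrow\|S\|_{\text{\rm r-o-LW}}\le\|T\|_{\text{\rm r-o-LW}}$) then follows since both sides equal $\|\,|S|\,\|$ and $\|\,|T|\,\|$.

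Completeness of $(\text{\rm r-o-LW}(E),\|\cdot\|_{\text{\rm r-o-LW}})$ follows from Assertion \ref{P-norm} once we know $\text{\rm o-LW}(E)$ is closed in operator norm in $\text{\rm L}(E)$; this is the analogue of \cite[Prop.2.1]{BLM1} used in Theorem \ref{a-LW-Riesz-algebra}, and can be taken from \cite{BLM2} or proved directly via the characterization in Assertion \ref{Bouras--Lhaimer--Moussa - order LW}(ii) — if $T_n\to T$ in operator norm with each $T_n\in\text{\rm o-LW}(E)$, then for disjoint bounded $(f_n)$ in $E'$ and $x\in E$ a routine $\varepsilon/3$-estimate gives $|T'f_n|(x)\to 0$, so $T\in\text{\rm o-LW}(E)$. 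Finally, submultiplicativity: for $S,T\in\text{\rm r-o-LW}(E)$ we have $\pm S\le|S|$, $\pm T\le|T|$ with $|S|,|T|\in\text{\rm o-LW}(E)$, so $\pm ST\le|ST|\le|S|\,|T|$, and since $\text{\rm o-LW}(E)$ is an ideal, $|S|\,|T|\in\text{\rm o-LW}(E)$; therefore
$$
   \|ST\|_{\text{\rm r-o-LW}}\le\|\,|S|\,|T|\,\|\le\|\,|S|\,\|\cdot\|\,|T|\,\|=\|S\|_{\text{\rm r-o-LW}}\cdot\|T\|_{\text{\rm r-o-LW}}.
$$
The one genuinely non-routine point is the closedness of $\text{\rm o-LW}(E)$ under operator norm, which here plays the role played by \cite[Prop.2.1]{BLM1} in the \text{\rm a-LW} case; everything else is a transcription of the earlier argument with the \text{\rm o-LW} lemmas in place of the \text{\rm a-LW} ones.
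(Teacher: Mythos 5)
Your overall route is exactly the paper's: part i) from Lemma \ref{Composition of oLW}, part ii) from Lemma \ref{prop elem oLW}, Proposition \ref{vect lat}, Assertion \ref{P-norm}, the operator-norm closedness of $\text{\rm o-LW}(E)$ (which the paper takes from \cite[Prop.2.1]{BLM2}), and a transcription of the proof of Theorem \ref{a-LW-Riesz-algebra}; every step that the theorem actually needs is correct.

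One claim embedded in your write-up is false, though: the parenthetical assertion that $\text{\rm r-o-LW}(E)$ is a \emph{two-sided} algebra ideal, justified by invoking Lemma \ref{Composition of oLW} to get $ST_i\in\text{\rm o-LW}(E)$. The lemma only covers the composition in which the order bounded factor acts first, i.e. $T\circ S$ with $S\in{\cal L}_{ob}$ and $T$ \text{\rm o-LW}; it says nothing about $S\circ T$, and indeed positive operators need not carry \text{\rm LW}-sets onto \text{\rm LW}-sets. For a counterexample on one space, take $E=\ell^1\oplus\ell^\infty$, let $T(a,b)=(a,0)$, which is positive and \text{\rm o-LW} (disjoint sequences dominated by an $\ell^1$-element are norm null), and let $S(a,b)=\bigl(0,(\sum_k a_k)\,{\mathbb 1}_{\mathbb N}\bigr)$, which is positive; then $ST[0,(e_1,0)]=\{(0,t\,{\mathbb 1}_{\mathbb N}):0\le t\le 1\}$, whose solid hull contains the disjoint, non norm-null sequence $(0,e_n)$, so $ST\notin\text{\rm o-LW}(E)$. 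Thus $\text{\rm r-o-LW}(E)$ is only a right ideal, as the paper states. Fortunately the error is not load-bearing: the subalgebra property and the submultiplicativity estimate use only the right-ideal direction (the products $T_iS_j$ and $|S|\,|T|$ have the order bounded factor on the right), and in the ``Moreover'' step you should factor $T_i=I_E\circ T_i$ rather than $T_iI_E$ so that Lemma \ref{Composition of oLW} applies with $I_E$ as the \text{\rm o-LW} factor — since the two products are the same operator, your conclusion stands. With the two-sidedness remark deleted, your argument coincides with the paper's proof.
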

\begin{proof}
i) Due to Lemma \ref{Composition of oLW},  
$\text{\rm r-o-LW}(E)$ is a right algebra ideal and hence 
is a subalgebra of $\text{\rm L}_r(E)$. 
The condition on $I_E$, that makes $\text{\rm r-o-LW}(E)=\text{\rm L}_r(E)$,
is clear.

ii) It follows from Lemma \ref{prop elem oLW} that the subalgebra 
$\text{\rm r-o-LW}(E)$ is closed under
modulus, and hence it is Riesz subalgebra of $\text{\rm L}_r(E)$.

As $\text{\rm o-LW}(E)$ is a closed subspace of $\text{\rm L}(E)$ by \cite[Prop.2.1]{BLM2},
the rest of the proof is similar to the proof of Theorem \ref{a-LW-Riesz-algebra}.
\end{proof}

The following lemma is a version of Lemma~\ref{Composition of MW} 
for \text{\rm o-MW}-operators.

\begin{lemma}\label{Composition of oMW}
If $T\in\text{\rm o-MW}(E,F)$ and $S\in{\cal L}_{ob}(F,G)$, 
then $S  T\in\text{\rm o-MW}(E,G)$.
\end{lemma}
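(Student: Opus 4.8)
The plan is to verify the defining condition of an \text{\rm o-MW}-operator directly, pushing the second factor $S$ onto its adjoint so that the hypothesis on $T$ alone does the work. The only ingredient beyond routine bookkeeping is that the Banach-space adjoint of an order bounded operator is again order bounded (recorded in the Preliminaries, cf. \cite[Thm.1.73]{AlBu}); this is precisely what turns an order bounded sequence in $G'$ into an order bounded sequence in $F'$.

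Concretely, first I would fix a disjoint bounded sequence $(x_n)$ in $E$ and an order bounded sequence $(g_n)$ in $G'$, say $|g_n|\le g$ for all $n$ and some $g\in G'_+$. Since $S\in{\cal L}_{ob}(F,G)\subseteq\text{\rm L}(F,G)$, the adjoint $S':G'\to F'$ is a well-defined bounded operator and, by \cite[Thm.1.73]{AlBu}, it is order bounded; hence $S'([-g,g])$ is order bounded in $F'$ and the sequence $(S'g_n)$, contained in it, is order bounded in $F'$. Now
$$
   g_n(STx_n)=(S'g_n)(Tx_n)\qquad(\forall n\in\mathbb{N}),
$$
and applying the hypothesis $T\in\text{\rm o-MW}(E,F)$ to the disjoint bounded sequence $(x_n)$ in $E$ and the order bounded sequence $(S'g_n)$ in $F'$ gives, by Definition \ref{Main o-W operators} b), that $(S'g_n)(Tx_n)\to 0$, i.e. $g_n(STx_n)\to 0$. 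Since $(x_n)$ and $(g_n)$ were arbitrary, $ST\in\text{\rm o-MW}(E,G)$.

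An equivalent route, parallel to the proof of Lemma \ref{Composition of MW}(i), is by duality: $T\in\text{\rm o-MW}(E,F)$ yields $T'\in\text{\rm o-LW}(F',E')$ by Assertion \ref{Bouras--Lhaimer--Moussa - order}(i); $S'\in{\cal L}_{ob}(G',F')$ as above; so Lemma \ref{Composition of oLW} applied to the composition of $S':G'\to F'$ with $T':F'\to E'$ shows $(ST)'=T'S'\in\text{\rm o-LW}(G',E')$, whence $ST\in\text{\rm o-MW}(E,G)$ by Assertion \ref{Bouras--Lhaimer--Moussa - order}(i) again. In either approach there is no genuine obstacle; the single point not to be skipped is the order boundedness of $S'$, which is exactly why the hypothesis asks for $S\in{\cal L}_{ob}(F,G)$ and not merely $S\in\text{\rm L}(F,G)$.
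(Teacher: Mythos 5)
Your proposal is correct. Your second, ``equivalent route'' is in fact exactly the paper's proof: pass to adjoints, use Assertion~\ref{Bouras--Lhaimer--Moussa - order}(i) to get $T'\in\text{\rm o-LW}(F',E')$, note $S'\in{\cal L}_{ob}(G',F')$, apply Lemma~\ref{Composition of oLW} to conclude $(ST)'=T'S'\in\text{\rm o-LW}(G',E')$, and dualize back. Your primary argument, however, is a genuinely different and more elementary route: you verify Definition~\ref{Main o-W operators}~b) directly, using only the fact (recorded in the Preliminaries) that the adjoint of an order bounded operator is order bounded, so that $|g_n|\le g$ forces $(S'g_n)$ to be order bounded in $F'$ and the identity $g_n(STx_n)=(S'g_n)(Tx_n)$ hands the work to the hypothesis on $T$. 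What the direct check buys is self-containedness: it avoids both the semi-duality assertion and the unproved composition lemma for \text{\rm o-LW}-operators, at the cost of a line of bookkeeping. What the paper's duality route buys is uniformity with the treatment of \text{\rm MW}- and \text{\rm a-MW}-operators (compare Lemma~\ref{Composition of MW}), where the same adjoint mechanism is reused. The one point you rightly flag --- order boundedness of $S'$, which is why $S\in{\cal L}_{ob}(F,G)$ rather than merely $S\in\text{\rm L}(F,G)$ is assumed --- is indeed the only substantive ingredient in either version; note also that $ST$ is bounded because ${\cal L}_{ob}(F,G)\subseteq\text{\rm L}(F,G)$, which your inclusion already covers.
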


\begin{proof}
Let $T\in\text{\rm o-MW}(E,F)$ and $S\in{\cal L}_{ob}(F,G)$. 
Then $T'\in\text{\rm o-LW}(F',E')$ by
Assertion~\ref{Bouras--Lhaimer--Moussa - order}~(i). It follows from 
Lemma \ref{Composition of oLW} 
that $(S  T)'=T'  S'\in\text{\rm o-LW}(G',E')$.
Applying Assertion~\ref{Bouras--Lhaimer--Moussa - order}~(i) again, 
we get
$S  T\in\text{\rm o-MW}(E,G)$.
\end{proof}
\noindent

It is straightforward that \text{\rm o-MW}-operators 
satisfy the domination property.
Thus, the next lemma follows from Proposition~\ref{prop elem}~ii).

\begin{lemma}\label{prop elem oMW}
Let an operator $T\in{\cal L}(E,F)$ possess the modulus. TFAE.
\begin{enumerate}[{\em i)}]
\item $T$ is a regularly $\text{\rm o-MW}$-operator.
\item $|T|$ is an $\text{\rm o-MW}$-operator.
\end{enumerate}
\end{lemma}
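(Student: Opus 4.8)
The plan is to obtain the lemma as an immediate corollary of Proposition~\ref{prop elem}~ii), applied with ${\cal P}=\text{\rm o-MW}(E,F)$. For this I must verify the two structural hypotheses of that proposition for this choice of ${\cal P}$. The first, ${\cal P}\pm{\cal P}\subseteq{\cal P}\ne\emptyset$, is immediate: $\text{\rm o-MW}(E,F)$ consists of bounded operators, contains $0$, and is a linear subspace of ${\cal L}(E,F)$ because the defining condition ``$f_n(Tx_n)\to 0$ for every disjoint bounded $(x_n)$ in $E$ and every order bounded $(f_n)$ in $F'$'' is linear in $T$. The hypothesis that $|T|$ exists in ${\cal L}(E,F)$ is precisely what the lemma assumes.

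The only substantive point is the \emph{domination property} for $\text{\rm o-MW}$-operators, and this is the step I would spell out. Suppose $0\le S\le T$ with $T\in\text{\rm o-MW}(E,F)$; I want $S\in\text{\rm o-MW}(E,F)$. Fix a disjoint bounded sequence $(x_n)$ in $E$ and an order bounded sequence $(f_n)$ in $F'$, and pick $g\in F'_+$ with $|f_n|\le g$ for all $n$. Then $(|x_n|)$ is again disjoint and bounded, and, using $0\le S\le T$ together with $g\ge 0$ and $|x_n|\ge 0$,
\[
  |f_n(Sx_n)|\le |f_n|\bigl(|Sx_n|\bigr)\le |f_n|\bigl(S|x_n|\bigr)\le g\bigl(S|x_n|\bigr)\le g\bigl(T|x_n|\bigr).
\]
Applying the $\text{\rm o-MW}$-condition for $T$ to the disjoint bounded sequence $(|x_n|)$ together with the \emph{constant} order bounded sequence $f_n\equiv g$ gives $g(T|x_n|)\to 0$, hence $f_n(Sx_n)\to 0$, i.e. $S\in\text{\rm o-MW}(E,F)$.

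Once both hypotheses are in hand, Proposition~\ref{prop elem}~ii) yields directly that $T$ is a regularly $\text{\rm o-MW}$-operator --- a difference of two positive $\text{\rm o-MW}$-operators --- if and only if $|T|$ is an $\text{\rm o-MW}$-operator, which is the claim. I do not anticipate a genuine obstacle; the only point needing a little care is the passage from the two-variable $\text{\rm o-MW}$-condition to the one-sided limit $g(T|x_n|)\to 0$, which is exactly why one tests $T$ against the constant order bounded sequence $g$ --- the move that makes the domination property ``straightforward'' as asserted in the text.
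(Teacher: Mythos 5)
Your proposal is correct and follows essentially the same route as the paper: the paper derives the lemma directly from Proposition~\ref{prop elem}~ii), merely remarking that the domination property for $\text{\rm o-MW}$-operators is straightforward, which is exactly the step you spell out (correctly) via the estimate $|f_n(Sx_n)|\le g(T|x_n|)$ and the constant test sequence $f_n\equiv g$.
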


The next result is a version of Proposition \ref{MW-algebra} 
for \text{\rm o-MW}-operators. 
We omit its proof as it is similar to the proof of Theorem \ref{o-LW-algebra}, 
with replacement of Lemma \ref{prop elem oLW} by Lemma \ref{prop elem oMW}.

\begin{theorem}\label{o-MW-algebra}
The following statements hold. 
\begin{enumerate}[{\em i)}]
\item 
$\text{\rm r-o-MW}(E)$ is a subalgebra of $\text{\rm L}_r(E)$. Moreover, 
$$
   \text{\rm r-o-MW}(E)=\text{\rm L}_r(E) \ \Longleftrightarrow I_E\in\text{\rm o-MW}(E).
$$ 
\item 
If $E$ is Dedekind complete then 
$(\text{\rm r-o-MW}(E), \ \|\cdot\|_{\text{\rm r-o-MW}})$
is a closed Riesz subalgebra and an order ideal of $(\text{\rm L}_r(E), \ \|\cdot\|_r)$.
\end{enumerate}
\end{theorem}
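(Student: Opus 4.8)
The plan is to mimic the proof of Theorem~\ref{o-LW-algebra} line by line, substituting Lemma~\ref{Composition of oMW} for Lemma~\ref{Composition of oLW} and Lemma~\ref{prop elem oMW} for Lemma~\ref{prop elem oLW}. For i), I would first note that $\text{\rm o-MW}(E,F)$ is a linear subspace of $\text{\rm L}(E,F)$, so ${\cal P}\pm{\cal P}\subseteq{\cal P}$ holds and $\text{\rm r-o-MW}(E)$ is a subspace of $\text{\rm L}_r(E)$ (cf.\ Proposition~\ref{prop elem}). To establish the left-ideal property, take $T=T_1-T_2\in\text{\rm r-o-MW}(E)$ with $T_1,T_2\ge 0$ in $\text{\rm o-MW}(E)$ and $S=S_1-S_2\in\text{\rm L}_r(E)$ with $S_1,S_2\ge 0$; since positive operators are order bounded, Lemma~\ref{Composition of oMW} gives $S_iT_j\in\text{\rm o-MW}(E)$, and as these operators are positive, $ST=(S_1T_1+S_2T_2)-(S_1T_2+S_2T_1)$ is a difference of positive \text{\rm o-MW}-operators, i.e.\ $ST\in\text{\rm r-o-MW}(E)$. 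Hence $\text{\rm r-o-MW}(E)$ is a left algebra ideal, in particular a subalgebra, of $\text{\rm L}_r(E)$. The equivalence $\text{\rm r-o-MW}(E)=\text{\rm L}_r(E)\Longleftrightarrow I_E\in\text{\rm o-MW}(E)$ is then immediate: if $I_E\in\text{\rm o-MW}(E)\subseteq\text{\rm r-o-MW}(E)$ then $T=T\,I_E\in\text{\rm r-o-MW}(E)$ for every $T\in\text{\rm L}_r(E)$ by the left-ideal property, while $\text{\rm r-o-MW}(E)=\text{\rm L}_r(E)$ forces $I_E\in\text{\rm r-o-MW}(E)\subseteq\text{\rm o-MW}(E)$.

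For ii), with $E$ Dedekind complete, $\text{\rm L}_r(E)$ is a Dedekind complete vector lattice and every regular operator possesses a modulus. Lemma~\ref{prop elem oMW} then yields $|T|\in\text{\rm r-o-MW}(E)$ whenever $T\in\text{\rm r-o-MW}(E)$, so $\text{\rm r-o-MW}(E)$ is a Riesz subalgebra of $\text{\rm L}_r(E)$; since \text{\rm o-MW}-operators satisfy the domination property, Proposition~\ref{vect lat} makes it an order ideal of $\text{\rm L}_r(E)$. I would next record that $\text{\rm o-MW}(E)$ is norm-closed in $\text{\rm L}(E)$ (the \text{\rm o-MW}-analogue of \cite[Prop.2.1]{BLM2}, which one may prove directly just as in Lemma~\ref{l-MW-closed}), so that Assertion~\ref{P-norm} applies and $(\text{\rm r-o-MW}(E),\|\cdot\|_{\text{\rm r-o-MW}})$ is a Banach space with $\|\cdot\|_{\text{\rm r-o-MW}}\ge\|\cdot\|_r$. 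Since $\pm T\le|T|\in\text{\rm o-MW}(E)$ by Lemma~\ref{prop elem oMW}, one also has $\|T\|_{\text{\rm r-o-MW}}\le\||T|\|=\|T\|_r$, so the two norms agree on $\text{\rm r-o-MW}(E)$; in particular $\text{\rm r-o-MW}(E)$ is $\|\cdot\|_r$-closed in $\text{\rm L}_r(E)$. Submultiplicativity then follows from $\pm ST\le|ST|\le|S|\,|T|\in\text{\rm o-MW}(E)$ (Lemma~\ref{Composition of oMW} once more), giving
$$
   \|ST\|_{\text{\rm r-o-MW}}\le\||S|\,|T|\|\le\||S|\|\cdot\||T|\|=\|S\|_{\text{\rm r-o-MW}}\cdot\|T\|_{\text{\rm r-o-MW}},
$$
exactly as in the last display of the proof of Theorem~\ref{a-LW-Riesz-algebra}.

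The one step that goes beyond formal manipulation is the identification $\|\cdot\|_{\text{\rm r-o-MW}}=\|\cdot\|_r$, which rests entirely on $|T|$ again being an \text{\rm o-MW}-operator — i.e.\ on Lemma~\ref{prop elem oMW}, hence on the domination property for \text{\rm o-MW}-operators via Proposition~\ref{prop elem}~ii). The secondary point to pin down is the operator-norm closedness of $\text{\rm o-MW}(E)$, needed to invoke Assertion~\ref{P-norm}; once these two facts are available the remainder is bookkeeping copied verbatim from Theorems~\ref{a-LW-Riesz-algebra} and~\ref{o-LW-algebra}.
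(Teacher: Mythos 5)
Your proposal is correct and takes essentially the same route as the paper, which omits the proof precisely because it is obtained from Theorem~\ref{o-LW-algebra} (and Theorem~\ref{a-LW-Riesz-algebra}) by substituting Lemma~\ref{prop elem oMW} for Lemma~\ref{prop elem oLW} and using Lemma~\ref{Composition of oMW} for the (left) ideal property — exactly what you carry out, including the norm identification via Assertion~\ref{P-norm} and the order-ideal claim via Proposition~\ref{vect lat}. One cosmetic remark: the inclusion $\text{\rm o-MW}(E)\subseteq\text{\rm r-o-MW}(E)$ you write when handling $I_E$ is false in general; what you actually need, and what holds, is that $I_E$ is positive, so $I_E\in\text{\rm o-MW}(E)$ immediately gives $I_E\in\text{\rm r-o-MW}(E)$.
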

\noindent
So, in certain cases, $\text{\rm r-o-LW}(E)$ or $\text{\rm r-o-MW}(E)$ 
may coincide with $\text{\rm L}_r(E)$. The following proposition develops this observation further.

\begin{proposition}\label{prop 14}
The following statements hold.  
\begin{enumerate}[\em i)]
\item
If $F$ is an AL-space then $\text{\rm r-o-LW}(E,F)=\text{\rm L}_r(E,F)$.
\item
If $E$ is an AM-space then $\text{\rm r-o-MW}(E,F)=\text{\rm L}_r(E,F)$.
\end{enumerate}
\end{proposition}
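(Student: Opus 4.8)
The plan is to reduce the statement to two facts already at hand: a positive operator between Banach lattices is order bounded, and order-continuity of the norm on $F$ (resp.\ on $E'$) gives ${\cal L}_{ob}(E,F)\subseteq\text{\rm o-LW}(E,F)$ (resp.\ ${\cal L}_{ob}(E,F)\subseteq\text{\rm o-MW}(E,F)$), as recorded earlier in this section from \cite[Thm.2.5]{BLM2}. In both parts the inclusions $\text{\rm r-o-LW}(E,F)\subseteq\text{\rm L}_r(E,F)$ and $\text{\rm r-o-MW}(E,F)\subseteq\text{\rm L}_r(E,F)$ are immediate from Definition~\ref{rP-operators}, since such an operator is by construction a difference of two positive operators and hence regular; so only the reverse inclusions carry content.

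For i), I would take $T\in\text{\rm L}_r(E,F)$ and fix a decomposition $T=T_1-T_2$ with $T_1,T_2:E\to F$ positive. Each $T_i$ sends an order interval $[a,b]$ into $[T_ia,T_ib]$, so $T_1,T_2\in{\cal L}_{ob}(E,F)$. Since $F$ is an AL-space, its norm is order continuous, whence $T_1,T_2\in\text{\rm o-LW}(E,F)$; being positive, they exhibit $T$ as an element of $\text{\rm r-o-LW}(E,F)$. Part ii) is dual: if $E$ is an AM-space then $E'$ is an AL-space and therefore has order-continuous norm, so ${\cal L}_{ob}(E,F)\subseteq\text{\rm o-MW}(E,F)$, and the same decomposition $T=T_1-T_2$ into positive --- hence order bounded, hence $\text{\rm o-MW}$ --- operators gives $T\in\text{\rm r-o-MW}(E,F)$.

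I do not expect a genuine obstacle. The points deserving attention are merely to invoke that AL-spaces, and hence duals of AM-spaces, carry order-continuous norms, and to use the \emph{order} boundedness --- not the mere norm boundedness --- of the positive summands $T_1,T_2$, since $\text{\rm o-LW}$- and $\text{\rm o-MW}$-operators are defined via the action of $T$ on order intervals, resp.\ on order bounded sequences in $F'$. It is worth noting that, unlike in Theorems~\ref{o-LW-algebra} and \ref{o-MW-algebra}, no Dedekind completeness of $E$ or $F$ is needed here.
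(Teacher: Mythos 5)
Your proof is correct, but it takes a different route from the paper's. For i) the paper decomposes $S=S_1-S_2$ into positive summands and argues that $S_i([-x,x])$ is relatively weakly compact (it sits inside the order interval $[-S_ix,S_ix]$ of the AL-space $F$, whose order-continuous norm makes intervals weakly compact), then invokes \cite[Thm.5.56]{AlBu}, i.e.\ that in an AL-space relatively weakly compact sets are exactly the \text{\rm LW}-sets; for ii) it dualizes part i) through Assertion~\ref{Bouras--Lhaimer--Moussa - order}(i), using that $E'$ is an AL-space. You instead bypass weak compactness entirely: positive summands are order bounded, the AL-norm on $F$ (resp.\ on $E'$) is order continuous, and then the recorded inclusions ${\cal L}_{ob}(E,F)\subseteq\text{\rm o-LW}(E,F)$, resp.\ ${\cal L}_{ob}(E,F)\subseteq\text{\rm o-MW}(E,F)$ (which for the \text{\rm o-LW} case follow directly from \eqref{o-bdd is Lwc}, since an order bounded operator maps $[0,x]$ into an order interval), finish both parts without any duality step. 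What each approach buys: the paper's argument is anchored in the specific AL-space characterization of \text{\rm LW}-sets, whereas yours uses only order continuity and therefore actually proves the stronger statements that $\text{\rm r-o-LW}(E,F)=\text{\rm L}_r(E,F)$ whenever $F$ has order-continuous norm and $\text{\rm r-o-MW}(E,F)=\text{\rm L}_r(E,F)$ whenever $E'$ does (e.g.\ whenever $E'$ is a KB-space); on the other hand, your part ii) leans on the cited inclusion ${\cal L}_{ob}(E,F)\subseteq\text{\rm o-MW}(E,F)$ from \cite[Thm.2.5]{BLM2} (equivalently Proposition~\ref{regular operators are o-MW}), while the paper's ii) needs only its own part i) together with Assertion~\ref{Bouras--Lhaimer--Moussa - order}(i). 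Your observations that the inclusion $\text{\rm r-o-LW}\subseteq\text{\rm L}_r$ is trivial and that no Dedekind completeness is needed are both correct.
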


\begin{proof}
i) Let $S=S_1-S_2$ with both $S_1,S_2:E\to F$ positive, and let $x\in X_+$. 
Then $S_1([-x,x])$ and $S_2([-x,x])$ are both relatively w-compact in $F$.
By \cite[Thm.5.56]{AlBu}, the sets $S_1([-x,x])$ and $S_2([-x,x])$ are both
\text{\rm LW}-subsets of $F$. Then $S_1$ and $S_2$ are 
positive \text{\rm o-LW}-operators and 
hence $S$ is a regularly \text{\rm o-LW}-operator. 

ii) Let $T\in\text{\rm L}_r(E,F)$. Since $E'$ is an AL-space, 
it follows from i) that 
$T'\in\text{\rm r-o-LW}(F',E')$. Then $T\in\text{\rm r-o-MW}(E,F)$ by 
Assertion~\ref{Bouras--Lhaimer--Moussa - order}~(i).
\end{proof}

\subsection{Miscellanea.}
An operator $T:E\to X$ is said to be AM-{\em compact}, 
if $T[0,x]$ is relatively compact in $X$ for each $x\in E_+$.  
An operator $T:E\to X$ is said to be b-AM-{\em compact}, 
if it carries each b-order bounded subset of $E$
into a relatively compact subset of $X$ \cite{AM}.
Denote by $\text{\rm AM}(E,X)$ (by $\text{\rm b-AM}(E,X)$) 
the set of all AM-compact (resp. b-AM-co\-m\-pact)
operators from $E$ to $X$. 
Clearly, $\text{\rm b-AM}(E,X)\subseteq \text{\rm AM}(E,X)$.

\begin{theorem}\label{every o-LW is AMc}
Let $F^a$ be discrete. 
Then $\text{\rm o-LW}(E,F)\subseteq\text{\rm AM}(E,F)$.
\end{theorem}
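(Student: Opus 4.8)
The plan is to show that if $F^a$ is discrete and $T\in\text{\rm o-LW}(E,F)$, then $T[0,x]$ is relatively compact in $F$ for every $x\in E_+$. The key observation is that, by definition of an \text{\rm o-LW}-operator, $T[0,x]$ is an \text{\rm LW}-subset of $F$, and every \text{\rm LW}-subset lies in $F^a$ (this was noted right after Definition \ref{LWC-subsets}). So the question reduces to a purely set-theoretic statement: \emph{in a discrete Banach lattice with order-continuous norm, every \text{\rm LW}-set is relatively compact}. This is where the discreteness hypothesis does all the work.

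First I would invoke Assertion \ref{Meyer 3.6.2}(ii): given $\varepsilon>0$, there is $u_\varepsilon\in F_+^a$ with $T[0,x]\subseteq[-u_\varepsilon,u_\varepsilon]+\varepsilon B_F$. Thus it suffices to prove that order intervals $[-u,u]$ with $u\in F^a$ are relatively compact when $F^a$ is discrete — an almost-order-bounded subset of such a space would then be totally bounded, hence relatively compact. To handle $[-u,u]$, I would use that $u$ lies in the band generated by the atoms: since $F^a$ is discrete, $u=\sup_n u_n$ where each $u_n$ is a (finite) sum of atomic components of $u$, and because $u\in F^a$ the tails $u-u_n\downarrow 0$ force $\|u-u_n\|\to 0$. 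On each finite-dimensional piece the corresponding order interval is compact (it sits in a space isomorphic to $\ell^\infty_{k}$), and the uniform approximation $\|u-u_n\|\to 0$ lets one patch these together: $[-u,u]\subseteq[-u_n,u_n]+2\|u-u_n\|B_F$, so $[-u,u]$ is almost order bounded by \emph{relatively compact} sets, hence totally bounded.

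Chaining the two approximations — first $T[0,x]$ inside an $\varepsilon$-neighborhood of $[-u_\varepsilon,u_\varepsilon]$, then $[-u_\varepsilon,u_\varepsilon]$ inside an $\varepsilon$-neighborhood of a finite-dimensional (hence compact) order interval — shows $T[0,x]$ is totally bounded, and since $F$ is complete, relatively compact. As $x\in E_+$ was arbitrary, $T\in\text{\rm AM}(E,F)$.

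\textbf{Main obstacle.} The delicate point is the claim that an order interval $[-u,u]$ with $u$ in the order-continuous part of a discrete Banach lattice is relatively compact. The discreteness gives atoms, and order continuity of $u$ gives the norm-convergence $\|u-u_n\|\to 0$ of the finite atomic truncations; but one must be careful that the truncation operators (band projections onto finitely many atoms) are available — they are, since $F$ is Dedekind complete on the relevant band, or one argues directly with the net of atomic components of $u$. Assembling the estimate $[-u,u]\subseteq[-u_n,u_n]+2\|u-u_n\|B_F$ and recognizing $[-u_n,u_n]$ as a compact order interval in a finite sum of one-dimensional ideals is the crux; everything else is the routine "compact $+\varepsilon$-ball" bookkeeping that total boundedness permits.
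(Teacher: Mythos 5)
Your proof is correct and follows essentially the same route as the paper: both reduce to Assertion \ref{Meyer 3.6.2}(ii), writing $T[0,x]\subseteq[-u_\varepsilon,u_\varepsilon]+\varepsilon B_F$ with $u_\varepsilon\in F_+^a$, and then use that order intervals in the discrete, order-continuous ideal $F^a$ are compact to conclude total boundedness. The only difference is that the paper simply invokes this compactness of $[-u_\varepsilon,u_\varepsilon]$ as known, whereas you supply its proof via finite atomic truncations and the estimate $[-u,u]\subseteq[-u_n,u_n]+\|u-u_n\|B_F$, which is a sound (indeed slightly more self-contained) way to justify that step.
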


\begin{proof}
Let $T\in\text{\rm o-LW}(E,F)$. Take $x\in E_+$. 
Since $T[0,x]$ is an \text{\rm LW}-subset of $F$,
for every $\varepsilon>0$, there is $u_\varepsilon\in F_+^a$ with
\begin{equation}\label{(*)}
   T[0,x]\subseteq [-u_\varepsilon,u_\varepsilon]+\varepsilon B_F,
\end{equation}
by Assertion~\ref{Meyer 3.6.2}. 
Let $\varepsilon>0$.  Since $F^a$ is discrete, 
$[-u_\varepsilon,u_\varepsilon]$ is compact. 
As $\varepsilon>0$ is arbitrary, 
\eqref{(*)} implies that $T[0,x]$ is totally bounded and 
hence is relatively compact.
Since $x\in E_+$ is arbitrary, $T\in \text{\rm AM}(E,F)$.
\end{proof}
\noindent
While \text{\rm LW}-operators are \text{\rm w}-compact, 
\text{\rm o-LW}-operators are generally not, e.g. 
$\text{I}_{c_0}\in\text{\rm o-LW}(c_0)\setminus\text{W}(c_0)$.
The following corollary describes some conditions for 
w-compactness of \text{\rm o-LW}-operators. 

\begin{corollary}\label{when o-LW is W}
Let $F^a$ be discrete and let either $E'$ be $KB$ or $F$ be reflexive. 
Then $\text{\rm o-LW}(E,F) \subseteq \text{\rm W}(E,F)$.
\end{corollary}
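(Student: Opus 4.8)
The plan is to derive the Corollary from Theorem~\ref{every o-LW is AMc} together with the two extra hypotheses. So let $T\in\text{\rm o-LW}(E,F)$. Since every \text{\rm LW}-subset of $F$ is contained in $F^a$, we get $T[0,x]\subseteq F^a$ for each $x\in E_+$, hence $T(E)\subseteq F^a$; thus $T$ may be regarded as an operator into the closed ideal $F^a$, which carries an order continuous norm and, by hypothesis, is discrete. As $F^a$ is a norm closed, hence weakly closed, subspace of $F$ containing $T(B_E)$, the weak closure of $T(B_E)$ in $F$ already lies in $F^a$; consequently $T$ is \text{\rm w}-compact into $F$ iff it is \text{\rm w}-compact as an operator $E\to F^a$, so it suffices to work with $T:E\to F^a$. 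Finally, by Theorem~\ref{every o-LW is AMc} the operator $T$ is \text{\rm AM}-compact, i.e. $T[0,x]$ is relatively norm compact for every $x\in E_+$.

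If $F$ is reflexive, then $\overline{T(B_E)}$ is a bounded, closed, convex subset of the reflexive space $F$, hence \text{\rm w}-compact, and $T\in\text{\rm W}(E,F)$; in this case neither \text{\rm AM}-compactness nor discreteness of $F^a$ is needed. Assume now that $E'$ is a \text{\rm KB}-space; by Assertion~\ref{E' is o-cont} this is exactly the statement that every disjoint bounded sequence in $E$ is \text{\rm w}-null. By the Eberlein--\v{S}mulian theorem it is then enough to show that every sequence $(x_n)$ in $B_E$ admits a subsequence $(x_{n_k})$ for which $(Tx_{n_k})$ converges weakly in $F^a$.

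To produce such a subsequence I would run a Kadec--Pe\l{}czy\'nski-type dichotomy on $(x_n)$ inside $E$: after passing to a subsequence, $(x_n)$ should be, up to norm perturbations of arbitrarily small size, either contained in a fixed order interval $[-u,u]\subseteq E$ or pairwise disjoint. In the first alternative $T$ carries the perturbed sequence into the relatively compact set $T[-u,u]$, and one extracts a norm-convergent --- a fortiori \text{\rm w}-convergent --- subsequence of $(Tx_{n_k})$. In the second alternative the approximating disjoint sequence $(y_k)$ is bounded, hence \text{\rm w}-null by the \text{\rm KB} hypothesis, so $(Ty_k)$ is \text{\rm w}-null (bounded operators are weak-to-weak continuous), and again $(Tx_{n_k})$ has a \text{\rm w}-convergent subsequence; the discreteness of $F^a$ is used both to splice the two regimes together and to push the weak limit through the finite-rank ``tail'' projections attached to the atoms of $F^a$. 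The main obstacle --- and the delicate point of the whole argument --- is precisely this dichotomy step: extracting from an \emph{arbitrary} bounded sequence of $E$ a subsequence that is a small perturbation of an order-bounded or of a disjoint sequence, and organizing the perturbation estimates so that the weak limit is genuinely attained in $F^a$ (and not merely in $(F^a)''$). This is where the structural hypotheses on $E'$ and on $F^a$ must be used to their full strength.
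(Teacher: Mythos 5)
Your reduction to $F^a$ and your reflexive branch are fine: every bounded operator into a reflexive space is weakly compact, so that half of the statement needs nothing beyond boundedness of $T$. The whole content therefore sits in the branch where $E'$ is a KB-space, and there the step you yourself flag as the ``main obstacle'' is a genuine gap, not a technicality: the Kadec--Pe\l{}czy\'nski-type dichotomy you invoke --- every bounded sequence in $E$ has a subsequence which, up to arbitrarily small norm perturbation, is either contained in a fixed order interval or pairwise disjoint --- is simply false in a general Banach lattice. Already in $\ell^2$ the sequence $x_n=e_1+e_n$ has no subsequence within distance $\tfrac12$ of any order interval $[-u,u]$ (this would force infinitely many coordinates of $u$ to be at least $\tfrac12$), and no subsequence within distance $\tfrac12$ of a pairwise disjoint sequence (every approximant would need first coordinate at least $\tfrac12$). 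So the KB case is not proved.

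Moreover, the gap cannot be closed using only the three ingredients you allow yourself, namely AM-compactness of $T$ (from Theorem~\ref{every o-LW is AMc}), weak nullity of disjoint bounded sequences in $E$ (from Assertion~\ref{E' is o-cont}), and discreteness of $F^a$: for $E=F=c_0$ the identity is $\text{\rm o-LW}$ and AM-compact, $(c_0)'=\ell^1$ is a KB-space and $(c_0)^a=c_0$ is discrete, yet $I_{c_0}\notin\text{\rm W}(c_0)$ --- this is exactly the example recorded in the paper just before the corollary, and the witnessing sequence $x_n=\sum_{k\le n}e_k$ is one on which your dichotomy fails. The paper's own proof is entirely different and much shorter: it applies Theorem~\ref{every o-LW is AMc} to get $T\in\text{\rm AM}(E,F)$ and then quotes \cite[Thm.2.4]{AMA}, so in the KB branch all the analytic work is delegated to that cited theorem; the $c_0$ observation above shows that any self-contained argument must use the precise hypotheses of that result (and that the hypothesis ``$E'$ is KB'' as stated here deserves to be checked against the exact formulation in \cite{AMA}), rather than only the facts you extracted.
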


\begin{proof}
Let $T\in\text{\rm o-LW}(E,F)$. 
By Theorem~\ref{every o-LW is AMc}, $T\in\text{\rm AM}(E,F)$.
In view of \cite[Thm.2.4]{AMA}, $T\in\text{\rm W}(E,F)$.
\end{proof}

\begin{proposition}\label{T is o-MW}
Let $T\in\text{\rm L}_r(E,F)$ and  $x\in E_+$. 
Then the restriction of $T:(I_x,\|.\|_x)\to F$ is \text{\rm o-MW}. 
\end{proposition}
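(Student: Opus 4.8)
The plan is to reduce the assertion to Proposition~\ref{prop 14}~ii) by observing that the domain $(I_x,\|\cdot\|_x)$ is an AM-space and that the restriction of a regular operator stays regular. First I would record that the principal ideal $I_x$, normed by the gauge $\|z\|_x=\inf\{\lambda>0:|z|\le\lambda x\}$, is a Banach lattice which is an AM-space with unit $x$: the AM-property and the fact that $[-x,x]$ is its closed unit ball are immediate, and completeness follows from $\|\cdot\|\le\|x\|\,\|\cdot\|_x$ on $I_x$, since a $\|\cdot\|_x$-Cauchy sequence $(u_n)$ converges in the norm of $E$ to some $u$, and freezing $m$ in the inequality $|u_m-u_n|\le\varepsilon x$ (valid for $m,n$ large) and letting $n\to\infty$ gives $|u_m-u|\le\varepsilon x$ by closedness of $E_+$, so $u\in I_x$ and $\|u_m-u\|_x\to0$.

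Next I would write $T=T_1-T_2$ with $T_1,T_2:E\to F$ positive and note that each restriction $T_i|_{I_x}:(I_x,\|\cdot\|_x)\to F$ is positive and bounded, since $|T_iz|\le T_i|z|\le\|z\|_x\,T_ix$ yields $\|T_iz\|\le\|T_ix\|\,\|z\|_x$; hence $T|_{I_x}=T_1|_{I_x}-T_2|_{I_x}\in\text{\rm L}_r(I_x,F)$. Since $(I_x,\|\cdot\|_x)$ is an AM-space, Proposition~\ref{prop 14}~ii) gives $\text{\rm r-o-MW}(I_x,F)=\text{\rm L}_r(I_x,F)$, so $T|_{I_x}\in\text{\rm r-o-MW}(I_x,F)\subseteq\text{\rm o-MW}(I_x,F)$, which is the claim.

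As this essentially only repackages Proposition~\ref{prop 14}~ii), one could alternatively argue directly from Definition~\ref{Main o-W operators}~b). Given a disjoint sequence $(z_n)$ in the unit ball of $(I_x,\|\cdot\|_x)$, so $|z_n|\le x$, and an order bounded sequence $(f_n)$ in $F'$ with $|f_n|\le f\in F'_+$, one puts $S=T_1+T_2\ge0$; then $|Tz_n|\le S|z_n|$ and $|f_n(Tz_n)|\le f(S|z_n|)=(S'f)(|z_n|)$. The terms $|z_n|$ are pairwise disjoint and each is $\le x$, so $\sum_{k=1}^n|z_k|=\bigvee_{k=1}^n|z_k|\le x$ for every $n$; applying the positive functional $S'f\in E'_+$ shows $\sum_k(S'f)(|z_k|)\le(S'f)(x)<\infty$, whence $(S'f)(|z_n|)\to0$ and therefore $f_n(Tz_n)\to0$. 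In either route the only substantive point is the bookkeeping observation that a disjoint sequence dominated by $x$ has all of its partial sums dominated by $x$ (equivalently, that order bounded disjoint sequences are killed in the limit by every functional); everything else is routine.
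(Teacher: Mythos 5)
Your proposal is correct, and both of your routes differ from the paper's own argument. The paper proves the statement directly but through weak-convergence machinery: it reduces to $T\ge 0$, notes that a disjoint $\|\cdot\|_x$-bounded (hence order bounded) sequence $(x_n)$ in $I_x$ is weakly null in $\sigma(I_x,I_x')$ (the dual of the AM-space $I_x$ is an AL-space, so its norm is o-continuous), uses sequential weak continuity of the lattice operations to get $|x_n|\stackrel{\text{\rm w}}{\to}0$, and then the weak continuity of the positive operator $T$ to conclude $f(T|x_n|)\to 0$, which dominates $|f_n(Tx_n)|$. Your first route instead packages the same structural observation --- that $(I_x,\|\cdot\|_x)$ is an AM-space with unit $x$ and that $T|_{I_x}$ is regular and $\|\cdot\|_x$-bounded --- and quotes Proposition~\ref{prop 14}~ii), exhibiting the proposition as an immediate corollary of the AM-space result; this is legitimate since $\text{\rm r-o-MW}\subseteq\text{\rm o-MW}$, and it makes the logical dependence transparent, at the cost of resting on the duality machinery behind Proposition~\ref{prop 14}. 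Your second route is the most elementary of the three: the estimate $|f_n(Tz_n)|\le (S'f)(|z_n|)$ with $S=T_1+T_2$, combined with the fact that pairwise disjoint elements $|z_k|\le x$ have partial sums $\sum_{k=1}^n|z_k|=\bigvee_{k=1}^n|z_k|\le x$, so that $\sum_k(S'f)(|z_k|)\le (S'f)(x)<\infty$ forces $(S'f)(|z_n|)\to 0$, needs no weak topology, no AL/AM duality, and no citation at all. All steps in both routes check out (including the completeness of $(I_x,\|\cdot\|_x)$ and the bound $\|T_iz\|\le\|T_ix\|\,\|z\|_x$), so the only real difference from the paper is methodological: the paper trades your summability bookkeeping for weak-continuity arguments.
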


\begin{proof}
We may suppose $T\ge 0$. 
The principal order ideal $I_x$ of $E$ is a Banach lattice equipped 
with the norm $\|y\|_x=\inf\{\lambda: |y|\le\lambda x\}$.
Let $(x_n)$ be a disjoint order bounded sequence in $I_x$. 
Then $x_n \to 0$ in $\sigma(I_x,I'_x)$. 
Since the lattice operations are sequentially 
w-continuous (cf. \cite[Prop.13]{Wnuk3}), 
we have $|x_n| \to 0$ in $\sigma(I_x,I'_x)$. 
Since $T$ is positive, $T: (I_x,\|.\|_x)\to F$ is continuous, and hence 
$T$ is also continuous 
when the both spaces are equipped with their w-topologies. 
Let  a sequence $(f_n)$ be order bounded in $F'$, 
say $|f_n|\le f\in F'$. Then 
$$
   |f_n(Tx_n)|\le |f_n|(|Tx_n|)\le |f_n|(T|x_n|)\le f(T|x_n|),
$$
and $|f_n(Tx_n)|$ can be made arbitrary small, if $n$ is large enough. 
Therefore $T\in \text{\rm o-MW}(I_x,F)$.
\end{proof}

\begin{definition}\label{semicompact operator}
{\em An operator $T:X\to F$ is called {\em semicompact} 
if it takes bounded subsets of $X$ onto 
almost order bounded subsets of $F$.}
\end{definition}
\noindent
It is straightforward that $\text{\rm K}(X,F)\subseteq\text{\rm semi-K}(X,F)$, 
where
$\text{\rm semi-K}(X,F)$ denotes the space of all semicompact 
operators from $X$ to $F$.
It is also well known that 
$\text{\rm semi-K}(X,F)\subseteq\text{\rm W}(X,F)$ for every $X$,
whenever the norm in $F$ is o-continuous.
In view of Assertion \ref{Schur}(v), if $F\in(\text{\rm PSP})$ then
$\text{\rm semi-K}(X,F)=\text{\rm W}(X,F)$ for every $X$.

\begin{proposition}\label{TS is LW}
Let $S\in\text{\rm semi-K}(E,F)$ and $T\in\text{\rm o-LW}(F,G)$. 
Then  $T  S\in\text{\rm LW}(E,G)$. 
\end{proposition}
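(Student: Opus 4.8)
The plan is to unwind the definition of \text{\rm LW}-set for $T S(B_E)$ by combining the semicompactness of $S$ with the \text{\rm o-LW}-property of $T$, using Assertion~\ref{Meyer 3.6.2} as the bridge. First I would fix $\varepsilon>0$ and use that $S$ is semicompact: the bounded set $S(B_E)$ is almost order bounded in $F$, so there is $u\in F_+$ with $S(B_E)\subseteq[-u,u]+\varepsilon B_F$. Applying $T$ and writing $T=T_1-T_2$ (or just passing to $|T|$ via Lemma~\ref{prop elem oLW} if convenient, but here boundedness of $T$ suffices), one gets $T S(B_E)\subseteq T([-u,u])+\varepsilon\|T\| B_G$. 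Since $[-u,u]=[0,2u]-u$ is, up to translation, an order interval, and $T\in\text{\rm o-LW}(F,G)$ means $T[0,v]$ is an \text{\rm LW}-subset of $G$ for every $v\in F_+$, the set $T([-u,u])$ is an \text{\rm LW}-subset of $G$ (an \text{\rm LW}-set translated by a fixed vector, or a difference of two such, is again an \text{\rm LW}-set — this is elementary from Definition~\ref{LWC-subsets} since the solid hull only grows by a fixed order-bounded amount, or more cleanly from Assertion~\ref{Meyer 3.6.2}(ii)).

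Next I would invoke Assertion~\ref{Meyer 3.6.2}(ii) for the \text{\rm LW}-set $T([-u,u])$: there is $w_\varepsilon\in G_+^a$ with $T([-u,u])\subseteq[-w_\varepsilon,w_\varepsilon]+\varepsilon B_G$. Combining the two inclusions,
$$
   T S(B_E)\subseteq[-w_\varepsilon,w_\varepsilon]+(\varepsilon\|T\|+\varepsilon)B_G
   =[-w_\varepsilon,w_\varepsilon]+\varepsilon(\|T\|+1)B_G.
$$
Since $\varepsilon>0$ was arbitrary and $w_\varepsilon\in G_+^a$, this is exactly condition (ii) of Assertion~\ref{Meyer 3.6.2} for the set $T S(B_E)$ (rescale $\varepsilon$), so $T S(B_E)$ is an \text{\rm LW}-subset of $G$. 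By \eqref{1a} this gives $T S\in\text{\rm LW}(E,G)$.

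The only mild subtlety — the step I'd flag as needing a sentence of care rather than a computation — is the claim that $T([-u,u])$ is an \text{\rm LW}-set in $G$: the definition of \text{\rm o-LW} is phrased for intervals $[0,x]$ with $x\in F_+$, whereas $[-u,u]$ is centered at $0$. This is handled by noting $[-u,u]\subseteq[0,2u]-u$, so $T([-u,u])\subseteq T[0,2u]-Tu$, a fixed translate of the \text{\rm LW}-set $T[0,2u]$; and a translate of an \text{\rm LW}-set by a single vector $y_0\in G$ is again an \text{\rm LW}-set, because $\{y_0\}\subseteq G^a$ forces $\{y_0\}$ (hence any finite translate) to be an \text{\rm LW}-set — indeed $G^a$ is an ideal, so $\mathrm{sol}(L+\{y_0\})\subseteq\mathrm{sol}(L)+[-|y_0|,|y_0|]$ and $|y_0|\in G^a$ keeps disjoint sequences norm-null. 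Alternatively one invokes Assertion~\ref{Bouras--Lhaimer--Moussa - order}(i) and the characterization Assertion~\ref{Bouras--Lhaimer--Moussa - order LW}(iii) directly. Everything else is routine bookkeeping with $\varepsilon$'s.
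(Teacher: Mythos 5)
Your argument is correct and is essentially the paper's own proof: semicompactness gives $S(B_E)\subseteq[-u_\varepsilon,u_\varepsilon]+\varepsilon B_F$, hence $TS(B_E)\subseteq T[-u_\varepsilon,u_\varepsilon]+\varepsilon\|T\|B_G$ with $T[-u_\varepsilon,u_\varepsilon]$ an \text{\rm LW}-set, and Assertion~\ref{Meyer 3.6.2}(ii) with $\varepsilon$ arbitrary finishes; the paper simply asserts the last two steps without your extra bookkeeping on the centered interval. One small repair to that bookkeeping: a translate of an \text{\rm LW}-set by an arbitrary $y_0\in G$ need not be an \text{\rm LW}-set (take $L=\{0\}$ and $y_0={\mathbb 1}_{\mathbb N}$ in $\ell^\infty$), so you should note explicitly that the relevant vector satisfies $Tu\in T[0,2u]\subseteq G^a$ (since \text{\rm LW}-sets lie in $G^a$), after which your $w_\varepsilon+|Tu|$ estimate works, or avoid translation altogether via your alternative route through Assertion~\ref{Bouras--Lhaimer--Moussa - order LW}(iii) combined with Assertion~\ref{Meyer 3.6.2}(iii).
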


\begin{proof}
WLOG $\|T\|\leq 1$. 
Let $\varepsilon>0$ be arbitrary. Pick $u_\varepsilon\in F_+$ such that
$S(B_E)\subseteq [-u_\varepsilon,u_\varepsilon]+\varepsilon B_F$. Then
$$
   TS(B_E)\subseteq T[-u_\varepsilon,u_\varepsilon]+\varepsilon T(B_F)
                          \subseteq T[-u_\varepsilon,u_\varepsilon]+\varepsilon B_G .
$$
Since $T\in \text{\rm o-LW}(F,G)$, the set $T[-u_\varepsilon,u_\varepsilon]$ 
is an \text{\rm LW}-subset of $G$. Since $\varepsilon>0$ is arbitrary,  $TS(B_E)$ 
is an \text{\rm LW}-subset of  $G$.
\end{proof}

\noindent
Finally, we include the following result. 
By  \cite{MAE}, a bounded operator $T:E\to Y$ 
is {\em almost order weakly compact} (shortly, $T\in\text{\rm ao-W}(E,Y)$), 
whenever $T$ maps almost order bounded subsets of $E$ 
onto relatively weakly compact subsets of $Y$. 

\begin{proposition}
$\text{\rm ao-W}(E,Y)=\text{\rm o-W}(E,Y)$.
\end{proposition}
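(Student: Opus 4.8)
The plan is to establish the two inclusions separately. The inclusion $\text{\rm ao-W}(E,Y)\subseteq\text{\rm o-W}(E,Y)$ is immediate, since every order interval is almost order bounded: for $x\in E_+$ one has $[-x,x]\subseteq[-x,x]+\varepsilon B_E$ for every $\varepsilon>0$, so an operator carrying almost order bounded subsets of $E$ onto relatively $\text{\rm w}$-compact subsets of $Y$ carries, in particular, every order interval onto a relatively $\text{\rm w}$-compact set and hence lies in $\text{\rm o-W}(E,Y)$.

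For the reverse inclusion $\text{\rm o-W}(E,Y)\subseteq\text{\rm ao-W}(E,Y)$ I would first note that every $T\in\text{\rm o-W}(E,Y)$ is automatically norm bounded, so that it is eligible to belong to $\text{\rm ao-W}(E,Y)$. Indeed, were $T$ unbounded, one could pick $x_n\in B_E$ with $\|Tx_n\|\ge n2^{n}$ and set $y:=\sum_{n=1}^{\infty}2^{-n}|x_n|\in E_+$ (the series is absolutely convergent in $E$); then $2^{-n}x_n\in[-y,y]$ while $\|T(2^{-n}x_n)\|\ge n$, contradicting the norm boundedness of the relatively $\text{\rm w}$-compact set $T[-y,y]$. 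Next, let $A\subseteq E$ be almost order bounded and fix $\varepsilon>0$; pick $x_\varepsilon\in E_+$ with $A\subseteq[-x_\varepsilon,x_\varepsilon]+\varepsilon B_E$, so that
$$
   T(A)\ \subseteq\ T[-x_\varepsilon,x_\varepsilon]+\varepsilon\|T\|\,B_Y\ \subseteq\ K_\varepsilon+\varepsilon\|T\|\,B_Y ,
$$
where $K_\varepsilon$ denotes the $\text{\rm w}$-closure of $T[-x_\varepsilon,x_\varepsilon]$, a $\text{\rm w}$-compact set by the assumption $T\in\text{\rm o-W}(E,Y)$. As $T(A)$ is bounded and this holds for every $\varepsilon>0$, taking $\text{\rm w}^\ast$-closures in $Y''$ gives $\overline{T(A)}^{\,\text{\rm w}^\ast}\subseteq K_\varepsilon+\varepsilon\|T\|\,B_{Y''}$ for each $\varepsilon$, whence $\overline{T(A)}^{\,\text{\rm w}^\ast}\subseteq Y$; being bounded and $\text{\rm w}^\ast$-closed, it is $\text{\rm w}^\ast$-compact, so $T(A)$ is relatively $\text{\rm w}$-compact in $Y$, i.e.\ $T\in\text{\rm ao-W}(E,Y)$.

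I expect the only genuinely non-routine ingredient to be the Grothendieck-type criterion invoked in the last step --- namely, that a bounded subset of a Banach space which for every $\varepsilon>0$ lies within $\varepsilon$ of a relatively $\text{\rm w}$-compact set is itself relatively $\text{\rm w}$-compact --- together with the preliminary remark that operators in $\text{\rm o-W}(E,Y)$ are automatically bounded. With these in hand, the remaining steps are routine.
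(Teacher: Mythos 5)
Your proposal is correct and takes essentially the same route as the paper: cover $T(A)$ by $T[-x_\varepsilon,x_\varepsilon]+\varepsilon\|T\|B_Y$ and conclude by the Grothendieck approximation criterion, which the paper simply cites as \cite[Thm.3.44]{AlBu} while you reprove it inline via $\text{\rm w}^\ast$-closures in $Y''$. Your preliminary observation that operators in $\text{\rm o-W}(E,Y)$ are automatically bounded is a correct extra step that the paper leaves implicit when it writes $\varepsilon\|T\|B_Y$.
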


\begin{proof}
Clearly, $\text{\rm ao-W}(E,Y)\subseteq\text{\rm o-W}(E,Y)$.

Let $T\in\text{\rm o-W}(E,Y)$, and let $A$ be 
an almost order bounded subset of $E$.
Take $\varepsilon>0$. There exists $x\in E_+$ 
with $A\subseteq [-x,x]+\varepsilon B_E$. 
Then 
$$
   T(A)\subseteq T[-x,x]+\varepsilon\|T\|B_Y \subseteq 
   \text{cl}_{\|\cdot\|}(T[-x,x])+\varepsilon\|T\|B_Y,
$$ 
with a \text{\rm w}-compact subset $\text{cl}_{\|\cdot\|}(T[-x,x])$ of $Y$. 
Since $\varepsilon>0$ is arbitrary, \cite[Thm.3.44]{AlBu} 
implies that $T(A)$ is relatively \text{\rm w}-compact, 
and hence  $T\in\text{\rm ao-W}(E,Y)$.
\end{proof}

{\tiny 
%%%%%%%%%%%%%%%%%%%%
}
%\newpage

\end{document}